\crefname{thm}{theorem}{theorems}
\crefname{lem}{lemma}{lemmas}
\crefname{cor}{corollary}{corollaries}
\crefname{prop}{proposition}{propositions}
\crefname{defn}{definition}{definitions}
\crefname{eg}{example}{examples}
\crefname{xca}{exercise}{exercises}
\crefname{conj}{conjecture}{conjectures}
\crefname{rmk}{remark}{remarks}
\crefname{qst}{question}{questions}
\crefname{obs}{observation}{observations}
\newtheorem{thm}{Theorem}[subsection]
\newtheorem*{thm*}{Theorem} 
\newtheorem{lem}[thm]{Lemma}  
\newtheorem{cor}[thm]{Corollary}
\newtheorem{prop}[thm]{Proposition}
\theoremstyle{definition}
\newtheorem{defn}[thm]{Definition}
\theoremstyle{remark}
\newtheorem{rmk}[thm]{Remark}
\newtheorem*{note}{Note} 
\numberwithin{equation}{section}
\newcommand{\ip}[2]{\langle #1 , #2 \rangle}    
\newcommand\C{\mathbb C}    
\newcommand\R{\mathbb R}    
\newcommand\N{\mathbb N}
\newcommand\style{\mathfrak}
\newcommand\GL{\mathbf{GL}} 
\newcommand\G{\mathbf{G}} 
\newcommand\Sp{\mathbf{Sp}} 
\newcommand\Hbf{\mathbf{H}} 
\newcommand\Abf{\mathbf{A}} 
\newcommand\Sbf{\mathbf{S}} 
\newcommand\Pbf{\mathbf{P}} 
\newcommand\Mbf{\mathbf{M}} 
\newcommand\Nbf{\mathbf{N}} 
\newcommand\of{\mathcal O_F} 
\DeclareMathOperator{\ran}{Im} 
\DeclareMathOperator{\spn}{span} 
\DeclareMathOperator{\Int}{Int}  
\DeclareMathOperator{\Ad}{Ad}  
\DeclareMathOperator{\supp}{Supp}
\DeclareMathOperator{\ind}{Ind} 
\DeclareMathOperator{\cind}{c-Ind} 
\DeclareMathOperator{\Hom}{Hom} 
\DeclareMathOperator{\op}{op} 
\begin{document}
\title{The support of closed orbit relative matrix coefficients} 
\author[J.~M.~Smith]{Jerrod Manford Smith}
\address{Department of Mathematics \& Statistics, University of Calgary, Calgary, AB, Canada, T2N 1N4}
\email{jerrod.smith@ucalgary.ca}
\urladdr{}
\thanks{}

\subjclass[2010]{Primary 22E50; Secondary 22E35}
\keywords{Distinguished representation, relatively supercuspidal, matrix coefficient, Jacquet module, $p$-adic symmetric space}
\date{June 18, 2019}
\dedicatory{}
\begin{abstract}
Let $F$ be a nonarchimedean local field with odd residual characteristic and let $G$ be the $F$-points of a connected reductive group defined over $F$.  Let $\theta$ be an $F$-involution of $G$. Let $H$ be the subgroup of $\theta$-fixed points in $G$.  Let $\chi$ be a quasi-character of $H$.   A smooth complex representation $(\pi,V)$ of $G$ is $(H,\chi)$-distinguished if there exists a nonzero element $\lambda$ in $\operatorname{Hom}_H(\pi,\chi)$.  We generalize a construction of descended invariant linear forms on Jacquet modules first carried out independently by Kato and Takano (2008), and Lagier (2008) to the setting of $(H,\chi)$-distinction.   We follow the methods of Kato and Takano, providing a new proof of similar results of Delorme (2010).  Moreover, we give an $(H,\chi)$-analogue of Kato and Takano's relative version of the Jacquet Subrepresentation Theorem.  In the case that $\chi$ is unramified, $\pi$ is parabolically induced from a $\theta$-stable parabolic subgroup of $G$, and $\lambda$ arises via the closed orbit in $Q\backslash G / H$, we study the (non)vanishing of the descended forms via the support of $\lambda$-relative matrix coefficients.
\end{abstract}
\maketitle
\section{Introduction}

Let $F$ be a nonarchimedean local field with odd residual characteristic.  Let $G$ be the $F$-points of a connected reductive group $\G$ defined over $F$.  Let $\theta$ be an $F$-rational involution of $\G$ and let $H = \G^\theta(F)$ be the group of $F$-points of the $\theta$-fixed subgroup of $\G$.  We are interested in the harmonic analysis on the symmetric space $G/H$.

Let $\chi$ be a quasi-character of $H$.  
A smooth complex representation $(\pi,V)$ of $G$ is $(H,\chi)$-distinguished if there exists a nonzero element $\lambda$ in $\operatorname{Hom}_H(\pi,\chi)$.
If $\chi$ is the trivial character of $H$, then we say that $(\pi,V)$ is $H$-distinguished.
By Frobenius Reciprocity, a representation $(\pi,V)$ of $G$ is $(H,\chi)$-distinguished if and only if there is a nonzero intertwining operator from $\pi$ to the (smooth) induced representation $\ind_H^G\chi$.
Thus, it is precisely the $(H,\chi)$-distinguished representations of $G$ that may be realized in the space of smooth $\C$-valued functions on $G$ that are $\chi$-eigenfunctions for the action of $H$.
Assume that $(\pi,V)$ is $H$-distinguished.
The intertwining operator from $(\pi,V)$ to $\ind_H^G\chi$ takes a vector $v \in V$ to a relative matrix coefficient $\varphi_{\lambda,v}$ associated to the linear functional $\lambda$.
In the case that $\chi$ is trivial, the $H$-distinguished representations of $G$ are exactly those that may be realized in the space $C^\infty(G/H)$ of smooth functions on $G/H$.  In this way, the study of distinguished representations arises naturally from questions in the harmonic analysis on $G/H$.

For the moment, assume that $\chi$ is trivial.  Independently, Kato and Takano \cite{kato--takano2008} and Lagier \cite{lagier2008} studied the asymptotic properties of relative matrix coefficients via Jacquet modules.
In particular, given a nonzero element $\lambda \in \Hom_H(\pi,1)$ Kato--Takano, and Lagier constructed a canonical linear functional $\lambda_N \in \Hom_{M^\theta}(\pi_N,1)$, associated to $\lambda$, for any $\theta$-split parabolic subgroup $P$ of $G$ with $\theta$-stable Levi factor $M = P \cap \theta(P)$.
Recall that a parabolic subgroup $P$ of $G$ is $\theta$-split if and only if $\theta(P)$ is opposite to $P$.
Here we denote by $(\pi_N,V_N)$ the normalized Jacquet module of $\pi$ along $P$, where $N$ is the unipotent radical of $P$.
In addition, Kato and Takano proved a generalization of Jacquet's Subrepresentation Theorem \cite[Theorem 7.1]{kato--takano2008} (see \Cref{kt08-rsc-thm}).  Kato and Takano continued their study of relative matrix coefficients in \cite{kato--takano2010}, where they proved a generalization of Casselman's criterion for square integrability.

The results of Lagier \cite{lagier2008} were extended to the case that $\chi$ is nontrivial by Delorme \cite{delorme2010} via methods similar to Lagier's.  
Let $(\pi,V)$ be a smooth $(H,\chi)$-distinguished representation of $G$, and let $P= MN$ be a $\theta$-split parabolic subgroup of $G$ with unipotent radical $N$ and $\theta$-stable Levi $M$.
Given a nonzero $\lambda \in \Hom_H(\pi,\chi)$ Delorme canonically associates a linear functional $\lambda_{N,\chi}$ in the space $\Hom_{M^\theta}(\pi_N,\chi\vert_{M^\theta})$.
Using the linear functional $\lambda_{N,\chi}$ Delorme studies the ``constant term" of smooth $(H,\chi)$-spherical functions on $G$ \cite{delorme2010}.
In \Cref{sec-invariant-kt08}, after recalling the results of \cite{kato--takano2008,lagier2008}, we give an alternate proof of Delorme's construction of $\lambda_{N,\chi}$ for admissible $(H,\chi)$-distinguished representations of $G$ via the methods of Kato and Takano \cite{kato--takano2008}.  We study $(H,\chi)$-relatively supercuspidal representations, whose relative matrix coefficients are compactly supported in the appropriate sense, in terms of non-vanishing of the linear functionals $\lambda_{N,\chi}$ (see \Cref{thm-twisted-rsc-condition}). In addition, we extend the relative subrepresentation theorem \cite[Theorem 7.1]{kato--takano2008} to the case of $(H,\chi)$-distinction (see \Cref{thm-twisted-kt08-7-1}).

In \Cref{sec-not-rsc}, under the assumption that $\chi$ is unramified, we study the descended linear functionals on certain $(H,\chi)$-distinguished representations induced from $\theta$-stable parabolic subgroups.
Let $Q=LU$ be a $\theta$-stable parabolic subgroup of $G$ with $\theta$-stable Levi factor $L$ and unipotent radical $U$.
Let $(\rho,V_\rho)$ be an irreducible representation of $L$ and assume that $\lambda \in \Hom_{L^\theta}(\delta_Q^{1/2}\rho,\delta_{Q^\theta}\chi\vert_{L^\theta})$ is nonzero.
Let $\iota_Q^G\rho$ be the normalized parabolically induced representation of $G$ obtained from $(\rho,V_\rho)$.   
  It is well known (see \Cref{lem-hom-injects}) that one may construct a nonzero linear functional $\lambda^G \in \Hom(\iota_Q^G\rho,\chi)$ from $\lambda$ via the Mackey theory for $Q\backslash G / H$.  
 We provide a necessary condition for $\iota_Q^G\rho$ to be $(H,\chi,\lambda^G)$-relatively supercuspidal (see \Cref{defn-rsc}\eqref{rsc-defn-1}).
  The main result of \Cref{sec-not-rsc} is the following theorem.
  
  \begin{thm*}[{\Cref{thm-twisted-not-rsc}}]
Let $\chi$ be an unramified quasi-character of $H$.
Let $Q=LU$ be a $\theta$-stable parabolic subgroup of $G$ with $\theta$-stable Levi subgroup $L$ and unipotent radical $U$.
Let $\rho$ be an irreducible representation of $L$ and assume that $\delta_Q^{1/2}\rho$ is $(L^\theta,\delta_{Q^\theta}\chi\vert_{L^\theta})$-distinguished. 
Let $\pi = \iota_Q^G\rho$.
Let $\lambda \in \Hom_{L^\theta}(\delta_Q^{1/2}\rho,\delta_{Q^\theta}\chi\vert_{L^\theta})$ be nonzero and construct $\lambda^G \in \Hom_H(\pi,\chi)$ via \Cref{lem-hom-injects}. 
If $\pi$ is $(H,\chi,\lambda^G)$-relatively supercuspidal, then $\delta_Q^{1/2}\rho$ must be $(L^\theta, \delta_{Q^\theta}\chi\vert_{L^\theta}, \lambda)$-relatively supercuspidal.
\end{thm*}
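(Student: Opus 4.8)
The plan is to argue by contraposition, via the relative analogue of Jacquet's supercuspidality criterion recorded in \Cref{thm-twisted-rsc-condition}. Suppose $\delta_Q^{1/2}\rho$ is \emph{not} $(L^\theta,\delta_{Q^\theta}\chi\vert_{L^\theta},\lambda)$-relatively supercuspidal. Since $Q$ is $\theta$-stable, so is $L$; thus $\theta\vert_L$ is an $F$-involution of $L$ with fixed points $L^\theta$, the character $\delta_{Q^\theta}\chi\vert_{L^\theta}$ is a quasi-character of $L^\theta$, and $\delta_Q^{1/2}\rho$ is an admissible, $(L^\theta,\delta_{Q^\theta}\chi\vert_{L^\theta})$-distinguished representation of $L$. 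Applying \Cref{thm-twisted-rsc-condition} to the triple $(L,\theta\vert_L,\delta_{Q^\theta}\chi\vert_{L^\theta})$, there is a \emph{proper} $\theta$-split parabolic subgroup $R=M_RN_R$ of $L$ for which the descended functional $\lambda_{N_R}\in\Hom_{M_R^\theta}\big((\delta_Q^{1/2}\rho)_{N_R},\delta_{Q^\theta}\chi\vert_{M_R^\theta}\big)$ is nonzero. It then suffices to produce a proper $\theta$-split parabolic subgroup $P=MN$ of $G$ for which the functional obtained by descending $\lambda^G$ along $P$ is nonzero: by \Cref{thm-twisted-rsc-condition}, now for $(G,\theta,\chi)$, this will show that $\pi$ is not $(H,\chi,\lambda^G)$-relatively supercuspidal, which is the contrapositive of the assertion.

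First I would lift $R$ to $G$. Let $S$ be the maximal $\theta$-split subtorus of the split component of the centre of $M_R$, so that $M_R=Z_L(S)$ by the structure theory of $\theta$-split parabolic subgroups. Because $R$ is a proper parabolic subgroup of $L$, the torus $S$ is noncentral in $L$, and since $Z(G)\subseteq Z(L)$ it is noncentral in $G$. Set $M=Z_G(S)$; then $M$ is a proper, $\theta$-stable Levi subgroup of $G$ with $M\cap L=M_R$ (a Levi subgroup of a Levi subgroup being again a Levi subgroup), and for a suitable chamber of $S$ there is a unique $\theta$-split parabolic subgroup $P=MN$ of $G$ with $P\cap L=R$; in particular $N\cap L=N_R$, and $P$ is proper.

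The heart of the matter is to show that descending $\lambda^G$ along $P$ gives a nonzero functional, and I would do this by establishing a compatibility between the closed-orbit construction of \Cref{lem-hom-injects} and the formation of descended functionals. Since $Q$ is $\theta$-stable, $QH$ is the closed orbit of $Q\backslash G/H$ and $Q\cap H$ is a parabolic subgroup of $H$, so $Q\cap H\backslash H$ is compact; consequently the formula of \Cref{lem-hom-injects} expresses $\lambda^G$, and hence each relative matrix coefficient $\varphi_{\lambda^G,v}$, as an integral over $Q\cap H\backslash H$ of $\langle\lambda,\cdot\rangle$. On the other hand, the geometric lemma shows that the subquotient of $\pi_N=(\iota_Q^G\rho)_N$ attached to the trivial $W_M$--$W_L$ double coset is, up to an explicit modulus twist, $\iota_{Q\cap M}^{M}\big((\delta_Q^{1/2}\rho)_{N_R}\big)$. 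Choosing a test vector $v\in\iota_Q^G\rho$ supported near $QH$ and comparing, along the directions of $S\subseteq A_M^-$, the asymptotics that define the descent of $\lambda^G$ along $P$ with those that define the descent of $\lambda$ along $R$, one should find --- after matching the modulus characters $\delta_P,\delta_{P^\theta}$ against $\delta_Q,\delta_{Q^\theta}$ and $\delta_R,\delta_{R^\theta}$, which is precisely what the twists in $\delta_Q^{1/2}\rho$ and $\delta_{Q^\theta}\chi$ are arranged to do --- that the descent of $\lambda^G$ along $P$ is carried on that subquotient by the functional built from $\lambda_{N_R}$ via \Cref{lem-hom-injects} applied to $(M,Q\cap M,M^\theta)$. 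Since $\lambda_{N_R}\neq 0$ and the closed-orbit construction is injective (this being the content of \Cref{lem-hom-injects}), that functional is nonzero, whence the descent of $\lambda^G$ along $P$ is nonzero, completing the argument. The unramifiedness of $\chi$ is used here to guarantee that the closed orbit is the only one entering the correspondence $\lambda\leftrightarrow\lambda^G$ and that no extra twist of $\chi$ appears in the bookkeeping of modulus characters.

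The step I expect to be the main obstacle is exactly this compatibility. One must choose $v$ adapted simultaneously to the Mackey filtration of $\iota_Q^G\rho\vert_H$ by the orbits of $Q\backslash G/H$ and to the Jacquet functor along $N$, track every modulus character, and --- most delicately --- verify that the non-closed $(Q,H)$-double cosets, and the $W_M$--$W_L$ double cosets other than the trivial one, contribute nothing to the asymptotics of $\varphi_{\lambda^G,v}$ in the $S$-directions, so that only the closed orbit on which $\lambda^G$ is built survives in the limit. This is also where the exponent estimates underlying \Cref{thm-twisted-kt08-7-1} and the constructions preceding \Cref{thm-twisted-rsc-condition} would be needed, in order to control the behaviour of $\varphi_{\lambda^G,v}$ and of the relative matrix coefficients of $\delta_Q^{1/2}\rho$ finely enough for the comparison to go through.
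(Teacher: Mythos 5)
Your overall shape (contraposition plus \Cref{thm-twisted-rsc-condition}) is the right one, but the proposal has a genuine gap exactly where you place the ``heart of the matter.'' The claim that the descent $(\lambda^G)_{N,\chi}$ along your lifted parabolic $P=MN$ is ``carried on'' the closed-orbit subquotient of $\pi_N$ by the form built from $\lambda_{N_R}$ via \Cref{lem-hom-injects}, and is therefore nonzero, is asserted rather than proved, and it is essentially the whole content of the theorem. To make it work you would have to (i) prove an actual compatibility between the closed-orbit construction and Jacquet descent, including showing that the non-closed $(Q,H)$-orbits and the nontrivial double cosets in the geometric lemma contribute nothing; (ii) deal with the fact that the geometric lemma only gives a \emph{filtration} of $\pi_N$, so nonvanishing of an induced form on one subquotient does not by itself give $(\lambda^G)_{N,\chi}\neq 0$ unless you control where $(\lambda^G)_{N,\chi}$ sits relative to that filtration; and (iii) verify the existence of a proper $\theta$-split $P$ of $G$ with $P\cap L=R$ and the exact modulus bookkeeping. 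None of these is carried out, and your stated use of unramifiedness (``only the closed orbit enters the correspondence'') is not where the hypothesis is actually needed: the real danger is that an integral of $\chi^{-1}$ over a compact $\theta$-fixed unipotent piece could vanish.

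For comparison, the paper's proof bypasses all of this with a direct test-vector computation and never forms the descent of $\lambda^G$ along an auxiliary parabolic adapted to $R$. One picks $v$ with $\varphi_{\lambda,v}$ not compactly supported modulo $Z_LL^\theta$, and (using a compact open $K$ with Iwahori factorization relative to $Q$) defines $f_v\in V_\pi$ supported on $Q(U^{\op}\cap K)$ with $f_v\equiv v$ on $U^{\op}\cap K$. Unwinding \eqref{def-lambdaG} and using $\theta$-stability of $Q$, $U$, $U^{\op}$ to see that only $\theta$-fixed elements of $U^{\op}\cap\ell^{-1}K\ell$ contribute, one gets for $\ell\in L$ the identity $\varphi_{\lambda^G,f_v}(\ell)=c_{\ell,\chi}\,\delta_Q^{1/2}(\ell^{-1})\,\varphi_{\lambda,v}(\ell)$, where $c_{\ell,\chi}$ is the integral of $\chi^{-1}$ over the image of $(U^{\op}\cap\ell^{-1}K\ell)^\theta$ in $Q^\theta\backslash H$; unramifiedness of $\chi$ is used precisely to make $c_{\ell,\chi}$ a positive volume. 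The relative Cartan decomposition of Delorme--S\'echerre for $L$ produces a sequence approaching $S$-infinity on which $\varphi_{\lambda,v}$, hence $\varphi_{\lambda^G,f_v}$, is nonzero, and then \Cref{support-not-RSC} (that is, \Cref{lem-twisted-kt-6-7-contrapositive} combined with \Cref{thm-twisted-rsc-condition}) shows $\pi$ is not $(H,\chi,\lambda^G)$-relatively supercuspidal. If you want to salvage your route, you would need to supply the descent-versus-closed-orbit compatibility as a theorem in its own right; as written, the argument is incomplete at its decisive step.
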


Of particular interest is the case that $\chi$ is trivial;  we sate this special case of \Cref{thm-twisted-not-rsc} below as \Cref{thm-not-rsc}.

\begin{rmk}[The open orbit]
	It is natural to ask about non-vanishing of the invariant forms $\lambda'_{N,\chi}$  where $P=MN$ is a $\theta$-split parabolic subgroup, $\rho$ is $M^\theta$-distinguished, and $\lambda' \in \Hom_H(\iota_P^G\rho,\chi)$ arises from the open orbit in $P \backslash G / H$ (such linear functionals were constructed and studied in \cite{blanc--delorme2008}).  This question has been addressed by Carmona and Delorme in \cite{carmona--delorme2014} and we refer the reader to their account.
\end{rmk}

\begin{rmk}
Many of the results of \Cref{sec-equivariant-kt08,sec-twisted-rsc-cond} (in particular, \Cref{prop-twisted-rPlambda} and \Cref{thm-twisted-rsc-condition}) were obtained previously by P.~Delorme \cite{delorme2010} via the methods of Lagier \cite{lagier2008}. Our proofs of these results employ the methods of Kato and Takano \cite{kato--takano2008}. Independently, S.~Takeda has obtained similar results also via the methods of Kato and Takano.\footnote{Takeda's results were announced during the summer of 2018.}  
\Cref{thm-twisted-not-rsc} is new and greatly improves upon \cite[Proposition 6.1.3]{smith-phd2017}.
\end{rmk}

\subsection{Local fields}
Let $F$ be a nonarchimedean local field with odd residual characteristic. We allow $F$ to have positive characteristic.
Let $\of$ be the ring of integers of $F$ and
 fix a uniformizer $\varpi$ of $F$. 
 Let $q$ be the cardinality of the residue field $k_F$ of $F$. 
 Let $|\cdot |_F$ denote the normalized absolute value on $F$ such that $|\varpi|_F = q^{-1}$.  We reserve $|\cdot |$ for the usual absolute value on $\C$.

\subsection{Reductive groups and   symmetric spaces}
Let $\G$ be a connected reductive group defined over $F$.  
Let $\theta$ be an $F$-involution of $\G$.  
Let $\Hbf = \G^\theta$ be the subgroup of $\theta$-fixed points in $\G$.
Write $G = \G(F)$ for the group of $F$-points of $\G$; similarly, $H = \Hbf(F)$.  
The quotient $G / H$ is a   symmetric space.

\begin{rmk}
	We will routinely abuse notation and identify an algebraic group  with its group of $F$-points.  
When the distinction is to be made, we will use boldface for the algebraic group and regular typeface for the group of $F$-points.
\end{rmk}

If $X$ is a subset of a group $G$, then let $N_G(X)$ denote the normalizer of $X$ in $G$ and let $C_G(X)$ denote the centralizer of $X$ in $G$. Given $g \in G$, we write $\Int g$ for the inner automorphism of $G$ given by $\Int g(x) = gxg^{-1}$, for all $x\in G$.

For an $F$-torus $\mathbf A \subset G$, let $A^1$ be the subgroup $\mathbf A(\of)$ of $\of$-points of $A = \mathbf A(F)$.  Let $X^*(\mathbf A)$ be the group of $F$-rational characters of $\mathbf A$.
\subsubsection{Tori and root systems relative to involutions}\label{sec-tori-involution}
An element $g \in G$ is $\theta$-split if $\theta(g) = g^{-1}$.  An $F$-torus $S$ contained in $G$ is $(\theta,F)$-split if $S$ is $F$-split and every element of $S$ is $\theta$-split. 
 
Let $S_0$ be a maximal $(\theta,F)$-split torus of $G$.  Let $ A_0$ be a $\theta$-stable maximal $F$-split torus of $G$ that contains $S_0$ \cite[Lemma 4.5(iii)]{helminck--wang1993}. 
Let $\Phi_0 = \Phi(G,A_0)$ be the root system of $G$ with respect to $A_0$.
Let $W_0 = W(G, A_0) = N_{G}(A_0) / C_{G}(A_0)$ be the Weyl group of $G$ with respect to $A_0$.

The torus $A_0$ is $\theta$-stable, so there is an action of $\theta$ on the $F$-rational characters $X^*(A_0)$; moreover, $\Phi_0$ is a $\theta$-stable subset of $X^*(A_0)$.
Recall that a base of $\Phi_0$ determines a choice of positive roots.

\begin{defn}\label{defn-theta-base}
A base $\Delta_0$ of $\Phi_0$ is called a $\theta$-base if for every positive root $\alpha \in \Phi_0^+$ such that $\theta(\alpha) \neq \alpha$ we have that $\theta(\alpha) \in \Phi_0^-$.   
\end{defn}

Let $r: X^*(A_0) \rightarrow X^*(S_0)$ be the surjective map defined by restriction of ($F$-rational) characters.
Let $\Delta_0$ be a $\theta$-base of $\Phi_0$.
Define $\overline \Phi_0 = r(\Phi_0) \setminus \{0\}$ and $\overline \Delta_0 = r(\Delta_0) \setminus \{0\}$.
The set $\overline \Phi_0$ coincides with $\Phi_0(G, S_0)$ and is referred to the as the restricted root system of $G/H$ \cite[Proposition 5.9]{helminck--wang1993}.
The set $\overline \Delta_0$ is a base of the root system $\overline \Phi_0$.  Note that $\overline \Phi_0$ is not necessarily reduced.
Let $\Phi_0^\theta$ and $\Delta_0^\theta$ be the subsets of $\theta$-fixed roots in $\Phi_0$, respectively $\Delta_0$.
We have that $\overline \Phi_0 = r(\Phi_0 \setminus \Phi_0^\theta)$ and $\overline \Delta_0 = r(\Delta_0 \setminus \Delta_0^\theta)$.
Let $\overline \Theta$ be a subset of $\overline \Delta_0$.  Set $[\overline \Theta] = r^{-1}(\overline \Theta) \cup \Delta_0^\theta$.  Subsets of $\Delta_0$ of the form $[\overline \Theta]$ are called $\theta$-split.  Maximal $\theta$-split subsets of $\Delta_0$ are of the form $[\overline\Delta_0 \setminus \{\bar \alpha \}]$, where $\bar \alpha \in \overline \Delta_0$.

\subsubsection{Parabolic subgroups relative to involutions}\label{sec-pblc-involution}
Let $\Pbf$ be an $F$-parabolic subgroup of $\G$.  We refer to $F$-parabolic subgroups of $\G$ simply as parabolic subgroups.
Let $\Nbf$ be the unipotent radical of $\Pbf$.  The reductive quotient $\mathbf M \cong \Pbf / \Nbf $ is called a Levi factor of $\Pbf$.
We denote by $\delta_P$ the modular character of $P = \Pbf(F)$ given by $\delta_P(p) = |\det \Ad_{\style n} (p) |_F$, where $\style n$ is the Lie algebra of $\Nbf $.  We also refer to $M = \Mbf(F)$ as a Levi subgroup of $G$.

Let $M$ be a Levi subgroup of $G$.  Let $A_M$ denote the $F$-split component of the centre of $M$.  The $(\theta,F)$-split component of $M$, denoted by $S_M$, is the largest $(\theta,F)$-split torus of $M$ that is contained in $A_M$.  
More precisely,
\begin{align*}
 \Sbf_\Mbf & = \left( \{ a \in \Abf_\Mbf : \theta(a) = a^{-1} \} \right)^\circ,	
\end{align*}
and $S_M = \Sbf_\Mbf(F)$, where $(\cdot)^\circ$ denotes the Zariski-connected component of the identity.

\begin{defn}\label{defn-theta-split-pblc}
	A parabolic subgroup $P$ of $G$ is $\theta$-split if $\theta(P)$ is opposite to $P$, in which case $M = P \cap \theta(P)$ is a $\theta$-stable Levi subgroup of $P$.
\end{defn}

If $\Theta \subset \Delta_0$ is $\theta$-split, then the $\Delta_0$-standard parabolic subgroup $P_\Theta$ is $\theta$-split.
Let $\Phi_\Theta$ be the subsystem of $\Phi_0$ generated by $\Theta$.
The standard parabolic subgroup $P_\Theta$ has unipotent radical $N_\Theta$ generated by the root subgroups $N_\alpha$, where $\alpha \in \Phi_0^+ \setminus \Phi_\Theta^+$.
The standard Levi subgroup $M_\Theta$ of $P_\Theta$ is equal to the centralizer in $G$ of the $F$-split torus $A_\Theta = \left( \bigcap_{\alpha \in \Theta} \ker \alpha \right)^\circ$.
Any $\Delta_0$-standard $\theta$-split parabolic subgroup of $G$ arises from a $\theta$-split subset of $\Delta_0$ \cite[Lemma 2.5(1)]{kato--takano2008}.

Let $\Theta \subset \Delta_0$ be $\theta$-split.
The $(\theta,F)$-split component of $M_\Theta$ is equal to
\begin{align*}
	S_\Theta &= \left( \bigcap_{\bar \alpha \in r(\Theta)} \ker (\bar\alpha : S_0 \rightarrow F^\times) \right)^\circ
\end{align*}
For any $0 < \epsilon \leq 1$, define 
\begin{align}\label{eq-split-dominant-part}
S_\Theta ^-(\epsilon) = \{ s \in S_\Theta  : |\alpha(s)|_F \leq \epsilon, \ \text{for all} \ \alpha \in \Delta_0 \setminus \Theta \}.
\end{align}
We write $S_\Theta ^-$ for $S_\Theta ^-(1)$ and refer to $S_\Theta ^-$ as the dominant part of $S_\Theta$.

By \cite[Theorem 2.9]{helminck--helminck1998}, the $\theta$-split subset $\Delta_0^\theta$ 
determines the standard minimal $\theta$-split parabolic subgroup $P_0 = P_{\Delta_0^\theta}$.
Let $N_0$ be the unipotent radical of $P_0$.  The standard Levi subgroup $M_0$ of $P_0$ is the centralizer in $G$ of the maximal $(\theta,F)$-split torus $S_0$.

\begin{lem}[{\cite[Lemma 2.5]{kato--takano2008}}]\label{KT08-lem-2.5}
Let $S_0 \subset A_0$, $\Delta_0$, and $P_0 = M_0N_0$ be as above.
\begin{enumerate}
\item Any $\theta$-split parabolic subgroup $P$ of $G$ is conjugate to a $\Delta_0$-standard $\theta$-split parabolic subgroup by an element $g \in (\Hbf \mathbf M_0)(F)$.
\item If the group of $F$-points of the product $(\Hbf \mathbf M_0)(F)$ is equal to $HM_0$, then any $\theta$-split parabolic subgroup of $G$ is $H$-conjugate to a $\Delta_0$-standard $\theta$-split parabolic subgroup.
\end{enumerate}
\end{lem}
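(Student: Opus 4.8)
The plan is to transpose the classical proof that an arbitrary parabolic subgroup is $G$-conjugate to a standard one into the $(\theta,F)$-split setting: locate a distinguished torus inside the given parabolic, conjugate it onto the fixed torus $S_0$, and then move the resulting parabolic subgroup within its relative Weyl orbit to the $\Delta_0$-standard chamber, replacing maximal $F$-split tori by maximal $(\theta,F)$-split tori throughout. Essentially all of the geometric content is the structure theory of Helminck--Wang \cite{helminck--wang1993} and Helminck--Helminck \cite{helminck--helminck1998} already recalled above; what remains is bookkeeping with $F$-rational points.

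For (1), let $P$ be a $\theta$-split parabolic subgroup with $\theta$-stable Levi $M = P \cap \theta(P)$ and unipotent radical $N$. I would first exhibit a maximal $(\theta,F)$-split torus of $G$ lying in $M$: choosing a minimal $\theta\vert_M$-split parabolic subgroup $P_M$ of $M$, the group $P_M N$ is again a $\theta$-split parabolic subgroup of $G$, it is contained in $P$, and its $\theta$-stable Levi is contained in $M$ (the key point being that $\theta$ carries $N$ onto the unipotent radical of the opposite parabolic subgroup $\theta(P)$). Iterating this descent terminates at a minimal $\theta$-split parabolic subgroup $P' \subseteq P$; since $P' \cap \theta(P') \subseteq P \cap \theta(P) = M$ and, by \cite{helminck--wang1993}, $P' \cap \theta(P')$ is the centralizer $C_G(S)$ of a maximal $(\theta,F)$-split torus $S$ of $G$, we get $S \subseteq M$. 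Next, since the maximal $(\theta,F)$-split tori of $G$ form a single $H$-conjugacy class \cite{helminck--wang1993}, pick $h_1 \in H$ with $h_1 S h_1^{-1} = S_0$; then $P_1 := h_1 P h_1^{-1}$ is $\theta$-split, and because the $\theta$-stable Levi of a $\theta$-split parabolic subgroup is the centralizer of its own $(\theta,F)$-split component \cite{helminck--wang1993}, the Levi of $P_1$ contains $C_G(S_0) = M_0$, hence also $A_0$. The $\theta$-split parabolic subgroups whose $\theta$-stable Levi contains $S_0$ correspond to parabolic subsets of the restricted root system $\overline\Phi_0 = \Phi(G,S_0)$, and every such subset is conjugate under the relative Weyl group $W(\overline\Phi_0) = N_G(S_0)/C_G(S_0)$ to a standard one; so, conjugating $P_1$ by a representative $n \in N_G(S_0)$ of a suitable element, we may assume $P_1 \supseteq P_0 = P_{\Delta_0^\theta}$. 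A $\theta$-split parabolic subgroup containing $P_0$ whose Levi contains $A_0$ is $\Delta_0$-standard, necessarily of the form $P_{[\overline\Theta]}$ for some $\overline\Theta \subseteq \overline\Delta_0$ (by the structure of $\theta$-split standard parabolic subgroups recalled above), so $(n h_1) P (n h_1)^{-1} = P_{[\overline\Theta]}$. Finally, Helminck--Wang's identity $N_\G(S_0) = N_{\Hbf}(S_0)\,\Mbf_0$ \cite{helminck--wang1993}, together with the fact that $N_{\Hbf}(S_0)$ normalizes $\Mbf_0 = C_\G(\Sbf_0)$, shows that $n h_1$ lies in $\Mbf_0\Hbf$, whence $g := (n h_1)^{-1}$ lies in $\Hbf\Mbf_0$; being $F$-rational, $g \in (\Hbf\Mbf_0)(F)$, and $g P_{[\overline\Theta]} g^{-1} = P$. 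This proves (1).

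For (2), assume $(\Hbf\Mbf_0)(F) = HM_0$, so that the element $g$ just constructed factors as $g = hm$ with $h \in H$ and $m \in M_0$. Since $m \in M_0 \subseteq P_0 = M_0 N_0$ lies in the $\Delta_0$-standard $\theta$-split parabolic subgroup $P_{[\overline\Theta]}$ --- note $[\overline\Theta] \supseteq \Delta_0^\theta$ --- and $P_{[\overline\Theta]}$ is self-normalizing, it follows that $P = g P_{[\overline\Theta]} g^{-1} = h\,(m P_{[\overline\Theta]} m^{-1})\,h^{-1} = h P_{[\overline\Theta]} h^{-1}$. Hence $P$ is $H$-conjugate to the $\Delta_0$-standard $\theta$-split parabolic subgroup $P_{[\overline\Theta]}$.

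I expect the main obstacle to lie not in the geometry --- which is a faithful rewrite of the split-group standardization argument, given the cited structure theory --- but in the $F$-rational bookkeeping. Over a nonarchimedean field the multiplication morphism $\Hbf \times \Mbf_0 \to \G$ need not be surjective on $F$-points, so part (1) can only place the conjugating element in $(\Hbf\Mbf_0)(F)$; it is precisely to repair this that part (2) assumes $(\Hbf\Mbf_0)(F) = HM_0$. Keeping straight which group --- over $F$, not merely over $\overline F$ --- each auxiliary element ($h_1$, the relative Weyl representative $n$, and their product $g$) belongs to, and in which order its $H$- and $M_0$-parts occur so that the $M_0$-part can be absorbed into the self-normalizing standard parabolic, is the delicate step; everything else is routine.
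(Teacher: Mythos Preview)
The paper does not prove this lemma at all; it is quoted verbatim from \cite[Lemma~2.5]{kato--takano2008} and stated without argument, so there is no in-paper proof to compare against. Your sketch is essentially the standard argument one finds in Kato--Takano (built on the Helminck--Wang structure theory): descend to a minimal $\theta$-split parabolic to locate a maximal $(\theta,F)$-split torus inside $M$, transport it to $S_0$, and then use the restricted Weyl group together with $N_\G(\Sbf_0)=N_{\Hbf}(\Sbf_0)\,\Mbf_0$ to standardize.

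One small point of care in your write-up: the variety $\Hbf\Mbf_0$ is not a priori a subgroup, so passing from ``$nh_1\in\Mbf_0\Hbf$'' to ``$(nh_1)^{-1}\in\Hbf\Mbf_0$'' deserves a word. It is fine here because $N_\G(\Sbf_0)$ \emph{is} a group containing both $N_{\Hbf}(\Sbf_0)$ and $\Mbf_0$, so $N_{\Hbf}(\Sbf_0)\,\Mbf_0=\Mbf_0\,N_{\Hbf}(\Sbf_0)$; writing $n=mh_2$ with $m\in\Mbf_0$ and $h_2\in N_{\Hbf}(\Sbf_0)$ gives $(nh_1)^{-1}=h_1^{-1}h_2^{-1}m^{-1}\in\Hbf\Mbf_0$ directly, and $F$-rationality is inherited from $nh_1\in G(F)$. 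With that clarification your argument is complete and matches the intended proof.
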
 

Let $P = MN$ be a $\theta$-split parabolic subgroup. Pick $g\in (\Hbf \mathbf M_0)(F)$ such that $P = gP_\Theta  g^{-1}$ for some $\theta$-split subset $\Theta \subset \Delta_0$.  
Since $g\in (\Hbf \mathbf M_0)(F)$ we have that $g^{-1}\theta(g) \in M_0= \mathbf M_0(F)$, and
we have $S_M = g S_\Theta g^{-1}$.
For a given $\epsilon >0$, one may extend the definition of $S_\Theta ^-$ in \eqref{eq-split-dominant-part}  to the torus $S_M$. 
Set $S_M^-(\epsilon) = g S_\Theta ^-(\epsilon) g^{-1}$ and define $S_M^- = S_M^-(1)$. 
Write $S_M^1$ to denote the group of $\of$-points $\Sbf_\Mbf(\of)$.

\subsection{Distinguished representations and relative matrix coefficients}
A representation $(\pi,V)$ of $G$ is smooth if for every $v\in V$ the stabilizer of $v$ in $G$ is an open subgroup.  
A smooth representation $(\pi,V)$ of $G$ is admissible if, for every compact open subgroup $K$ of $G$, the subspace $V^K$ of $K$-invariant vectors is finite dimensional. 
All of the representations that we consider are smooth and admissible. 
A quasi-character of $G$ is a one-dimensional representation.  
Let $(\pi,V)$ be a smooth representation of $G$. If $\omega$ is a quasi-character of $Z_G$, then $(\pi,V)$ is called an $\omega$-representation if $\pi$ has central character $\omega$.  

Let $P$ be a parabolic subgroup of $G$ with Levi subgroup $M$ and unipotent radical $N$.  
 Given a smooth representation $(\rho, V_\rho)$ of $M$ we may inflate $\rho$ to a representation of $P$, also denoted $\rho$, by declaring that $N$ acts trivially.
 We define the representation $\iota_P^G \rho$ of $G$ to be the (normalized) parabolically induced representation $\ind_P^G (\delta_P^{1/2} \otimes \rho)$, where $G$ acts by right translation of functions.

Let $(\pi,V)$ be a smooth representation of $G$.  Let $(\pi_N, V_N)$ denote the normalized Jacquet module of $\pi$ along $P$.
Precisely,  $V_N$ is the quotient of $V$ by the $P$-stable subspace $V(N) = \spn\{ \pi(n)v - v : n\in N , v\in V\}$, and the action of $P$ on $V_N$ is normalized by $\delta_P^{-1/2}$. 
The unipotent radical of $N$ acts trivially on $(\pi_N,V_N)$ and we will regard $(\pi_N, V_N)$ as a representation of the Levi factor $M \cong P/ N$ of $P$.

Let $\pi$ be a smooth representation of $G$.  We also let $\pi$ denote its restriction to $H$.   Let $\chi$ be a quasi-character of $H$.  
\begin{defn}\label{defn-dist}
The representation $\pi$ is $(H,\chi)$-distinguished if the space $\Hom_H(\pi,\chi)$ is nonzero.  If $\pi$ is $(H,1)$-distinguished, where $1$ is the trivial character of $H$, then we will simply call $\pi$ $H$-distinguished.
\end{defn}

Let $(\pi,V)$ be a smooth $(H,\chi)$-distinguished representation of $G$, and further assume that $\pi$ has central character $\omega$.
Note that $\omega$ restricted to the intersection of the centre $Z_G$ of $G$ with $H$ must agree with $\chi$ on $Z_G\cap H$.
Let $\lambda \in \Hom_H(\pi,\chi)$ be a nonzero linear form on $V$. Let $v$ be a nonzero vector in $V$.
In analogy with the usual matrix coefficients, define a complex-valued function $\varphi_{\lambda,v}$ on $G$ given by $g \mapsto \ip{\lambda}{\pi(g^{-1})v}$. 
Refer to the functions $\varphi_{\lambda,v}$ as $\lambda$-relative matrix coefficients.
When $\lambda$ is understood, we will drop it from the terminology.

Since the representation $(\pi,V)$ is smooth the relative matrix coefficients $\varphi_{\lambda,v}$ lie in $C^\infty(G)$, for every $v\in V$.
Moreover, since $\pi$ has central character $\omega$, the functions $\varphi_{v,\lambda}$ lie in the subspace $C_\omega^\infty(G)$ of $C^\infty(G)$ consisting of smooth (locally constant) functions $f:G \rightarrow \C$ such that $f(zg) = \omega(z)^{-1}f(g)$, for all $z\in Z_G$ and $g\in G$.
For all $g\in G, z \in Z_G$ and $h \in H$ observe that
\begin{align}\label{eq-support}
\varphi_{\lambda,v}(gzh) & = \ip{\lambda}{\pi(h^{-1}z^{-1}g^{-1})}  = \chi(h^{-1})\omega(z^{-1}) \varphi_{\lambda,v}(g).
\end{align}
Define $C^\infty(G,H,\chi)$ to be the subspace of $C^\infty(G)$ such that
\begin{align}\label{eq-H-chi-fcn}
C^\infty(G,H,\chi) = \{f \in C^\infty(G) : f(gh) = \chi(h^{-1}) f(g) \}.	
\end{align}
The map $v \mapsto \varphi_{\lambda,v} \in C^\infty(G,H,\chi)$ intertwines $(\pi,V)$ with the left-regular representation of $G$ on $C^\infty(G,H,\chi)$.\footnote{The representation of $G$ on $C^\infty(G,H,\chi)$ is the left-regular representation of $G$ on $\ind_H^G\chi$.  Note that our conventions for induced representations (for parabolic induction) would have $f(hg) = \chi(h)f(g)$, rather than $f(gh) = \chi(h^{-1})f(g)$.}
Let $C_\omega^\infty(G,H,\chi)$ be the intersection $C^\infty(G,H,\chi) \cap C_\omega^\infty(G)$.
By assumption, $\pi$ is an $\omega$-representation; therefore, $\varphi_{\lambda,v}$ lies in $C_\omega^\infty(G,H,\chi)$.
By \eqref{eq-support}, it makes sense to consider the support of the functions $\varphi_{\lambda,v}$ modulo $Z_GH$.
Indeed, if $\varphi_{\lambda,v}(g)\neq 0$, then $\varphi_{\lambda,v}(gzh)\neq 0$, for all $z\in Z_G$ and $h\in H$.
Finally, define $C_{\omega,0}^\infty(G,H,\chi)$ to be the space of functions
\begin{align}\label{eq-H-chi-fcn-cmpct}
	\{ f \in C_\omega^\infty(G,H,\chi): \supp(f) \ \text{has compact image in} \ G/Z_GH \}.
\end{align}
Observe that we have the chain of containments
\begin{align*}
	C_{\omega,0}^\infty(G,H,\chi) \subset C_\omega^\infty(G,H,\chi) \subset C_\omega^\infty(G) \subset C^\infty(G).
\end{align*}
We'll write $C_c^\infty(G)$ for the space of smooth compactly supported functions on $G$.  This is consistent with the above notation, since we'll write $C_0^\infty(G)$ to denote the space of smooth functions on $G$ the are compactly supported modulo $Z_G$. By definition, $C_c^\infty(G) \subset C_0^\infty(G)$.

\begin{defn}\label{defn-rsc}
The $\omega$-representation $(\pi,V)$ is said to be
\begin{enumerate}
 \item $(H,\chi,\lambda)$-relatively supercuspidal if and only if all of the $\lambda$-relative matrix coefficients are compactly supported modulo $Z_GH$, that is, $\varphi_{\lambda,v} \in C_{\omega,0}(G,H,\chi)$, for every $v\in V$. \label{rsc-defn-1}  
\item $(H,\chi)$-relatively supercuspidal if and only if $\pi$ is $(H,\chi,\lambda)$-relatively supercuspidal for every $\lambda \in \Hom_H(\pi,\chi)$.
\end{enumerate}
When $\chi=1$ is trivial, we drop it from the notation.
\end{defn}

\begin{rmk}[$H$-distinction]
If $\chi =1$, then since $\lambda$ is $H$-invariant the functions $\varphi_{\lambda,v}$ descend to well-defined functions on the quotient $G/H$.
In this case, $C^\infty(G,H,1) \cong C^\infty(G/H)$, $C_\omega^\infty(G,H,1) \cong C_\omega^\infty(G/H)$ and $C_{\omega,0}^\infty(G,H,1) \cong C_{\omega,0}^\infty(G/H)$ is the subspace of functions in $C_\omega^\infty(G/H)$ with compact support modulo $Z_GH$.
\end{rmk}

Let $Q = LU$ be a $\theta$-stable parabolic subgroup with $\theta$-stable Levi factor $L$ and unipotent radical $U$.  Note that the identity component of $Q^\theta = L^\theta U^\theta$ is a parabolic subgroup of $H^\circ$, with the expected Levi decomposition \cite{helminck--wang1993}, \cite[Lemma 3.1]{gurevich--offen2016}.  Let $\mu$ be a positive quasi-invariant measure on the (compact) quotient $Q^\theta \backslash H$ \cite[Theorem 1.21]{bernstein--zelevinsky1976}.

\begin{lem}\label{lem-hom-injects}
Let $\rho$ be a smooth representation of $L$ and let $\pi = \iota_Q^G \rho$. 
The map $\lambda \mapsto \lambda^G$ is an injection of $\Hom_{L^\theta}( \delta_Q^{1/2}\rho, \delta_{Q^\theta}\chi\vert_{L^\theta})$ into $\Hom_H(\pi,\chi)$, where $\lambda^G$ is given explicitly by
\begin{align}\label{def-lambdaG}
\ip{\lambda^G}{f} &= \int_{Q^\theta \backslash H} \ip{\lambda}{\chi(h)^{-1}f(h)} \ d\mu(h)
\end{align}
for any function $f$ in the space $V_\pi$ of $\pi$.  
\end{lem}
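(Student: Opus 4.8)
The plan is to verify that the formula \eqref{def-lambdaG} makes sense (the integrand descends to $Q^\theta\backslash H$), that it defines an $H$-equivariant functional, and that the assignment $\lambda\mapsto\lambda^G$ is injective. First I would check well-definedness of the integrand. For $f$ in the space of $\iota_Q^G\rho$ and $q\in Q^\theta$, write $q = \ell u$ with $\ell\in L^\theta$, $u\in U^\theta$; then $f(qh) = (\delta_Q^{1/2}\rho)(\ell)f(h)$ since $U$ acts trivially after the Jacquet-type quotient built into parabolic induction and by the defining transformation law of induced functions. Pairing against $\lambda$ and using $\lambda\in\Hom_{L^\theta}(\delta_Q^{1/2}\rho,\delta_{Q^\theta}\chi\vert_{L^\theta})$ gives $\langle\lambda,\chi(qh)^{-1}f(qh)\rangle = \delta_{Q^\theta}(\ell)\,\chi(\ell)\,\chi(q)^{-1}\langle\lambda,\chi(h)^{-1}f(h)\rangle$. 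Since $\chi(q) = \chi(\ell)$ (as $\chi$ is a quasi-character of $H$ and $U^\theta\subset H^\circ$ is unipotent, hence in the kernel of any quasi-character) and $\delta_{Q^\theta}(q) = \delta_{Q^\theta}(\ell)$, the integrand transforms by the modular character $\delta_{Q^\theta}$ of $Q^\theta$ under left translation by $Q^\theta$; this is precisely the transformation law that makes the integral against the quasi-invariant measure $\mu$ on $Q^\theta\backslash H$ well-defined, by \cite[Theorem 1.21]{bernstein--zelevinsky1976}. (One should be slightly careful here about whether $\delta_{Q^\theta}$ is the modular character of $Q^\theta$ or of its identity component, and match it to the normalization of $\mu$; this is the one bookkeeping point where the hypothesis $\lambda\in\Hom_{L^\theta}(\cdot,\delta_{Q^\theta}\chi\vert_{L^\theta})$ rather than $\Hom_{L^\theta}(\cdot,\chi\vert_{L^\theta})$ is forced.)

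Next I would verify $H$-equivariance. For $h_0\in H$ and $f$ in the space of $\pi$, the right translate $\pi(h_0)f$ has value $(\pi(h_0)f)(h) = f(hh_0)$, so
\begin{align*}
\ip{\lambda^G}{\pi(h_0)f} &= \int_{Q^\theta\backslash H}\ip{\lambda}{\chi(h)^{-1}f(hh_0)}\,d\mu(h)\\
&= \int_{Q^\theta\backslash H}\chi(h_0)\ip{\lambda}{\chi(hh_0)^{-1}f(hh_0)}\,d\mu(h) = \chi(h_0)\ip{\lambda^G}{f},
\end{align*}
using right-invariance of $\mu$ (which also holds for a quasi-invariant measure since the modular factor only affects the left $Q^\theta$-action, and the substitution $h\mapsto hh_0$ is a genuine right translation on $Q^\theta\backslash H$; the compact quotient is unimodular on the relevant side). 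Hence $\lambda^G\in\Hom_H(\pi,\chi)$.

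Finally, injectivity. Given nonzero $\lambda$, I must produce $f$ with $\ip{\lambda^G}{f}\neq 0$. The standard device: choose a small compact open subgroup $K$ of $G$ such that $Q^\theta\backslash Q^\theta K$ is an open neighbourhood of the identity coset in $Q^\theta\backslash H$ contained in a set on which sections can be chosen continuously, and take $f$ supported on the open cell $QK$ with $f\vert_{QK}$ built so that $f(h)$ for $h$ near $1$ is a fixed vector $v_0\in V_\rho$ with $\ip{\lambda}{v_0}\neq 0$ (such $v_0$ exists as $\lambda\neq 0$). Then $\ip{\lambda^G}{f} = \mu(Q^\theta\backslash Q^\theta K)\cdot\ip{\lambda}{v_0}\neq 0$ after possibly averaging the local construction; concretely one realizes $f$ via a function in $C_c^\infty(Q\backslash G)$-type construction supported near the base point and transports $v_0$ through it. The main obstacle is this last step: one needs the local structure of $Q\backslash G$ near the closed $H$-orbit — in particular that $Q^\theta\backslash H\hookrightarrow Q\backslash G$ is a closed embedding onto (a union of components of) the closed orbit, so that a function on $Q\backslash G$ can be prescribed on a neighbourhood of $Q^\theta\backslash H$ without interference from other orbits — together with the fact that parabolic induction allows one to freely prescribe the $V_\rho$-valued germ of $f$ at the base point. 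Everything else is a direct unwinding of definitions.
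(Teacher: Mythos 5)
Your route is genuinely different from the paper's. The paper does not verify the integral formula by hand at all: it restricts functions in $V_\pi$ to the closed subset $QH$ (closed because $Q$ is $\theta$-stable, \cite[Lemma 1.7]{helminck--wang1993}), identifies the resulting space $I(\rho,QH)$ with $\cind_{Q^\theta}^H[(\delta_Q^{1/2}\rho)\vert_{L^\theta}]$ by restriction to $H$, and then applies Frobenius reciprocity for compact induction \cite[Proposition 2.29]{bernstein--zelevinsky1976}, which gives the isomorphism $\Hom_H(I(\rho,QH),\chi)\cong\Hom_{L^\theta}(\delta_Q^{1/2}\rho,\delta_{Q^\theta}\chi\vert_{L^\theta})$ together with the explicit integral \eqref{def-lambdaG}. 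Injectivity is then purely formal: restriction $V_\pi\to I(\rho,QH)$ is a surjective $H$-map, so precomposition is injective on $\Hom$-spaces, and composing with the Frobenius isomorphism gives $\lambda\mapsto\lambda^G$ injective with no test vector needed. Your direct verification of well-definedness (the integrand transforms on the left under $Q^\theta$ by $\delta_{Q^\theta}$, matching the quasi-invariant measure) and of $H$-equivariance is essentially a hands-on proof of the same facts that \cite[Theorem 1.21 and Proposition 2.29]{bernstein--zelevinsky1976} package, and it is fine, including your observation that $\chi$ kills $U^\theta$ (any smooth quasi-character of the reductive group $H$ is trivial on unipotent elements); the phrase ``right-invariance of $\mu$'' should really be the statement that integration of $\delta_{Q^\theta}$-equivariant functions against the quasi-invariant measure is invariant under right translation by $H$.

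The one genuine gap is exactly the step you flag as ``the main obstacle'': the test-vector computation for injectivity is not carried out, and the appeal to the local structure of $Q\backslash G$ near the closed orbit is not the relevant point (the integral lives on $Q^\theta\backslash H$, not on $Q\backslash G$, so other $H$-orbits never enter). What you actually need is: (a) take $f$ supported on $Q(U^{\op}\cap K)$ with $f(q\bar u)=\delta_Q^{1/2}(l)\rho(l)v_0$; then for $h\in H\cap Q(U^{\op}\cap K)$, writing $h=q\bar u$ and applying $\theta$, the injectivity of the product map $L\times U\times U^{\op}\to G$ together with $\theta$-stability of $Q$ and $U^{\op}$ forces $q\in Q^\theta$ and $\bar u\in(U^{\op}\cap K)^\theta$, so the ($\delta_{Q^\theta}$-equivariant) integrand is supported on the image of $(U^{\op}\cap K)^\theta$, an open set of positive measure, where it equals $\chi(\bar u)^{-1}\ip{\lambda}{v_0}$; and (b) since $\chi$ is smooth you may shrink $K$ so that $K\cap H\subset\ker\chi$, which rules out the cancellation $\int\chi(\bar u)^{-1}d\mu(\bar u)=0$ that could otherwise occur (this is the same issue that forces the unramifiedness hypothesis later in the paper, where, incidentally, precisely this computation is performed in the proof of \Cref{thm-twisted-not-rsc} with a general $\ell\in L$ in place of the identity). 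With (a) and (b) supplied, your argument closes; without them it is only a sketch, and the paper's Mackey-theoretic route shows how to get injectivity without any of this bookkeeping.
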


 
\begin{proof}
 	Let $I(\rho,QH)$ denote the space of locally constant functions $\phi$ on $QH$ taking values in the space $V_\rho$ of $\rho$ and satisfying:
\begin{align*}
\phi(luh) = \delta_Q^{1/2}(l)\rho(l) \phi(h),
\end{align*}
for  all $l\in L, u\in U$ and $h\in H$.
Restriction of functions in $I(\rho, QH)$ to $H$ gives an equivalence between the $H$-representations $I(\rho, QH)$ and $\cind_{Q^\theta}^H [(\delta_{Q}^{1/2} \rho)\vert_{L^\theta}]=\ind_{Q^\theta}^H [(\delta_{Q}^{1/2} \rho)\vert_{L^\theta}]$, where $\cind$ denotes compact induction.
Applying the exact functor $\Hom_H(\cdot,\chi)$ we obtain 
\begin{align*}
\Hom_H(I(\rho,QH),\chi) \cong \Hom_H(\cind_{Q^\theta}^H [(\delta_{Q}^{1/2} \rho)\vert_{L^\theta}], \chi).
\end{align*}
By \cite[Proposition 2.29]{bernstein--zelevinsky1976}, 
\begin{align*}
\Hom_H(\cind_{Q^\theta}^H [(\delta_{Q}^{1/2} \rho)\vert_{L^\theta}] ,\chi) 
 & \cong 
 \Hom_{L^\theta}(\delta_{Q^\theta}^{-1}\delta_Q^{1/2}\rho, \chi\vert_{L^\theta})\\
  & = \Hom_{L^\theta}(\delta_Q^{1/2}\rho, \delta_{Q^\theta}\chi\vert_{L^\theta});
\end{align*}
moreover, the isomorphism is given explicitly by sending $\lambda$ in the space $\Hom_{L^\theta}(\delta_Q^{1/2}\rho, \delta_{Q^\theta}\chi\vert_{L^\theta})$ to the linear functional on $I(\rho,QH)$ defined by the (convergent) integral
\begin{align*}
\int_{Q^\theta\backslash H} \ip{\lambda}{\chi(h)^{-1} \phi(h)}\ d\mu(h),	
\end{align*}
for any $\phi \in I(\rho,QH)$.
In addition, since $Q$ is $\theta$-stable we have that $QH$ is closed in $G$ \cite[Lemma 1.7]{helminck--wang1993}, and the map $\phi \mapsto \phi\vert_{QH}$ is a surjective $H$-morphism from $V_\pi$ to $I(\rho, QH)$; therefore, we have an injection
\begin{align*}
\Hom_H(I(\rho,QH),\chi) \hookrightarrow \Hom_H(\pi,\chi).
\end{align*}
Taking into account the above isomorphism between $\Hom_H(I(\rho,QH),\chi)$ and $\Hom_{L^\theta}(\delta_Q^{1/2}\rho, \delta_{Q^\theta}\chi\vert_{L^\theta})$, we've obtained an explicit injection
\begin{align*}
\Hom_{L^\theta}(\delta_Q^{1/2}\rho, \delta_{Q^\theta}\chi\vert_{L^\theta}) \hookrightarrow \Hom_H(\iota_Q^G\rho,\chi)	
\end{align*}
 given by $\lambda \mapsto \lambda^{G}$, as claimed.
\end{proof}

\begin{rmk}
The invariant form $\lambda^G$ is the linear functional obtained from $\lambda$ via the closed orbit in $Q \backslash G / H$ and the Mackey theory.  See \cite[Proposition 7.1]{offen2017} for more on this perspective.
\end{rmk}


\begin{cor}\label{cor-hom-injects}
Suppose that $\delta_Q^{1/2}$ restricted to $L^\theta$ is equal to $\delta_{Q^\theta}$. The map $\lambda \mapsto \lambda^G$ is an injection of $\Hom_{L^\theta}(\rho, 1)$ into $\Hom_H(\iota_Q^G\rho,1)$. In particular, if $\rho$ is $L^\theta$-distinguished, then $\iota_Q^G\rho$ is $H$-distinguished.
\end{cor}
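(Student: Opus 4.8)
The plan is simply to specialize \Cref{lem-hom-injects} to the trivial character $\chi = 1$ and then identify the two relevant $\Hom$-spaces using the hypothesis on modular characters. First I would take $\chi = 1$ in \Cref{lem-hom-injects}; since the restriction $1\vert_{L^\theta}$ is again trivial, the lemma yields an explicit injection
\begin{align*}
\Hom_{L^\theta}(\delta_Q^{1/2}\rho, \delta_{Q^\theta}) \hookrightarrow \Hom_H(\iota_Q^G\rho, 1), \qquad \lambda \mapsto \lambda^G.
\end{align*}

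Next I would observe that, viewed as subspaces of the full linear dual of the space $V_\rho$ of $\rho$, we have $\Hom_{L^\theta}(\delta_Q^{1/2}\rho, \delta_{Q^\theta}) = \Hom_{L^\theta}(\rho, 1)$ under the standing hypothesis $\delta_Q^{1/2}\vert_{L^\theta} = \delta_{Q^\theta}$. Indeed, a linear functional $\lambda$ on $V_\rho$ lies in the left-hand space exactly when $\ip{\lambda}{\delta_Q^{1/2}(l)\rho(l)v} = \delta_{Q^\theta}(l)\ip{\lambda}{v}$ for all $l \in L^\theta$ and $v \in V_\rho$; since $\delta_Q^{1/2}(l) = \delta_{Q^\theta}(l)$ is a nonzero scalar, this is equivalent to $\ip{\lambda}{\rho(l)v} = \ip{\lambda}{v}$, which is the defining condition of $\Hom_{L^\theta}(\rho, 1)$. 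Substituting this identification into the displayed injection gives the asserted injection $\Hom_{L^\theta}(\rho, 1) \hookrightarrow \Hom_H(\iota_Q^G\rho, 1)$.

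For the final assertion, if $\rho$ is $L^\theta$-distinguished then $\Hom_{L^\theta}(\rho, 1) \neq 0$ by \Cref{defn-dist}, so its injective image $\Hom_H(\iota_Q^G\rho, 1)$ is also nonzero, i.e.\ $\iota_Q^G\rho$ is $H$-distinguished. The only point requiring any care is the identification of the two $\Hom$-spaces, and even that is immediate once one notes that the modular-character hypothesis is precisely what cancels the $\delta_Q^{1/2}$-twist against $\delta_{Q^\theta}$; there is no genuine obstacle here, as all the substance is carried by \Cref{lem-hom-injects}.
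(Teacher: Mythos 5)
Your proposal is correct and matches the paper's intent: the corollary is stated without proof precisely because it is the specialization of \Cref{lem-hom-injects} to $\chi=1$, with the hypothesis $\delta_Q^{1/2}\vert_{L^\theta}=\delta_{Q^\theta}$ cancelling the twist so that $\Hom_{L^\theta}(\delta_Q^{1/2}\rho,\delta_{Q^\theta})=\Hom_{L^\theta}(\rho,1)$. Your verification of that identification and the final nonvanishing step are exactly the intended argument.
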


\section{Linear funtionals on Jacquet modules}
\subsection{$M^\theta$-invariant linear forms on Jacquet modules}\label{sec-invariant-kt08}
Let $(\pi,V)$ be an admissible $H$-distinguished representation of $G$. Let $\lambda \in \Hom_H(\pi,1)$ be a nonzero $H$-invariant linear form on $V$. 
Let $P$ be a $\theta$-split parabolic subgroup of $G$.   Let $N$ be the  unipotent radical of $P$, and let $M= P \cap \theta(P)$ be a $\theta$-stable Levi factor of $P$.  
Independently, Kato--Takano  and Lagier have defined an $M^\theta$-invariant linear form $\lambda_N$ on the Jacquet module $(\pi_N, V_N)$.

Fix a maximal $\theta$-stable $F$-split torus $A_0$ containing a maximal $(\theta,F)$-split torus $S_0$. Let $\Delta_0$ be a $\theta$-base of the root system $\Phi_0$ of $G$ with respect to $A_0$.  
Assume that $P=P_\Theta$ is a $\Delta_0$-standard $\theta$-split parabolic subgroup of $G$ corresponding to a $\theta$-split subset $\Theta$ of $\Delta_0$.  
In order to define the $M^\theta$-invariant form $\lambda_N$, we first require the following result (\textit{cf.}~\cite[Proposition 1.4.4]{Casselman-book}).

\begin{lem}[{\cite[Lemma 4.3]{kato--takano2008}}]\label{lem-adapted-co-family}  
There exists a decreasing sequence $\{K_n\}_{n\geq 0}$ of $\theta$-stable compact open subgroups of $G$ satisfying the following properties.
\begin{enumerate}
\item The family $\{K_n\}_{n\geq 0}$ gives a neighbourhood base of the identity in $G$.
\item For each $n\geq 1$, the group $K_n$ is normal in $K_0$ and the quotient group $K_n/ K_{n+1}$ is a finite abelian $p$-group, where $p$ is the characteristic of the residue field $k_F$.
\item For each $K_n$, $n\geq1$, and each $\Delta_0$-standard $\theta$-split parabolic subgroup $P_\Theta$ of $G$, $K_n$ has Iwahori factorization with respect to $P_\Theta$,
and 
\begin{align*}
s (N_\Theta \cap K_n) s^{-1} \subset (N_\Theta \cap K_n), \quad s^{-1} (N_\Theta^{\op} \cap K_n) s \subset (N_\Theta^{\op} \cap K_n),
\end{align*}
for all $s \in S_\Theta^-$.
\item For each $\Delta_0$-standard $\theta$-split parabolic subgroup $P_\Theta$ of $G$, the family $\{M_\Theta \cap K_n\}_{n\geq 0}$ satisfies properties (1)--(3) for the group $M_\Theta$.
\end{enumerate}
\end{lem}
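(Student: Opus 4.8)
The plan is to construct the $K_n$ explicitly as congruence subgroups attached to a suitable faithful representation, running Casselman's argument \cite[Proposition 1.4.4]{Casselman-book} with a lattice that is, in addition, stable under $\theta$.

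The first step is to realize $\theta$ as an inner automorphism of an ambient linear group. Fix a faithful $F$-rational representation of $\G\rtimes\langle\theta\rangle$ on a finite-dimensional $F$-vector space $W$, and let $g_\theta\in\GL(W)$ be the image of $\theta$; then $g_\theta^2 = 1$ and $\theta = \Int(g_\theta)$ on $\G$. Since $\Abf_0$ is $F$-split and $\theta$-stable, $W$ decomposes over $F$ as $W = \bigoplus_{\chi\in X^*(\Abf_0)} W_\chi$, and the identity $\theta(t) = g_\theta t g_\theta^{-1}$ on $A_0$ yields $g_\theta W_\chi = W_{\chi\circ\theta^{-1}}$; in particular $g_\theta$ permutes the weight spaces and therefore carries $\Abf_0$-graded $\of$-lattices to $\Abf_0$-graded $\of$-lattices. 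Starting from any $\of$-lattice $L^0 = \bigoplus_\chi(L^0\cap W_\chi)$, I would set $L = L^0 + g_\theta L^0$: this is again an $\of$-lattice, still graded by the $\Abf_0$-weights, and now stable under $g_\theta$.

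With $L$ fixed, put $K_0 = G\cap\GL(L)$ and, for $n\ge 1$, $K_n = G\cap\bigl(1+\varpi^n\operatorname{End}_{\of}(L)\bigr)$. Each $K_n$ is $\theta$-stable since $\theta = \Int(g_\theta)$, $g_\theta\in\GL(L)$, and $\theta(\G) = \G$. Property (1) is the standard fact that the congruence subgroups of $\GL(L)$ form a neighbourhood base of the identity, intersected with $G$. For (2), congruence subgroups are normal in $\GL(L)$, so $K_n\triangleleft K_0$, and for $n\ge 1$ the map $1+\varpi^n X\mapsto X\bmod\varpi$ embeds $K_n/K_{n+1}$ into $\operatorname{End}_{\of}(L)\otimes_{\of}k_F$, a finite elementary abelian $p$-group, so $K_n/K_{n+1}$ is a finite abelian $p$-group. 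For (3), a routine computation with the weight grading shows that for every $\Delta_0$-standard parabolic $P_\Theta = M_\Theta N_\Theta$ and every $n\ge 1$ the group $K_n$ admits the Iwahori factorization $K_n = (K_n\cap N_\Theta^{\op})(K_n\cap M_\Theta)(K_n\cap N_\Theta)$, together with $K_n\cap N_\Theta = \prod_{\alpha\in\Phi_0^+\setminus\Phi_\Theta^+}(K_n\cap N_\alpha)$ for a fixed ordering of the roots and the analogous statement for $N_\Theta^{\op}$; when $\Theta$ is $\theta$-split, the contraction property then follows because every $s\in S_\Theta^-$ satisfies $|\beta(s)|_F = 1$ for $\beta\in\Theta$ (use $\theta(s) = s^{-1}$ together with $\theta(\beta) = \beta$ when $\beta\in\Delta_0^\theta$, and $S_\Theta\subset\ker r(\beta)$ otherwise), hence $|\alpha(s)|_F\le 1$ for all $\alpha\in\Phi_0^+\setminus\Phi_\Theta^+$, so $s$ acts on each such root subgroup by scalings of absolute value at most $1$ and conjugation by $s$ carries $N_\alpha\cap K_n$ into itself; the statement for $N_\Theta^{\op}$ and $s^{-1}$ is symmetric. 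For (4), when $\Theta$ is $\theta$-split the Levi $M_\Theta$ is $\theta$-stable, it acts on $W$ with $L$ still graded by the $\Abf_0$-weights, and the $\theta$-split standard parabolic subgroups of $M_\Theta$ correspond to the $\theta$-split subsets of $\Theta$; so the verification of (1)--(3) goes through verbatim inside $M_\Theta$, while $M_\Theta\cap K_n$ is $\theta$-stable as an intersection of $\theta$-stable subgroups.

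The step I expect to be delicate is the choice of lattice in the second paragraph: one needs a single $L$ that is both graded by the $\Abf_0$-weights — which is what makes all the Iwahori factorizations and the $S_\Theta^-$-contraction statements hold uniformly over all $\Delta_0$-standard $\theta$-split parabolic subgroups — and stable under the order-two element $g_\theta$ that implements $\theta$. This is precisely where one uses that $\theta$ normalizes $\Abf_0$, so that $g_\theta$ only permutes the weight spaces rather than mixing them, which is what keeps the symmetrized lattice $L^0 + g_\theta L^0$ graded. Everything after that is the usual congruence-subgroup bookkeeping, already present in the case without an involution.
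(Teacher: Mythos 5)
The paper offers no proof of this lemma (it is imported verbatim from \cite[Lemma 4.3]{kato--takano2008}), and your construction is essentially the one used there: realize $\theta$ as conjugation by an involutive element of an ambient general linear group, symmetrize an $A_0$-graded lattice so that it becomes $\theta$-stable as well, and then run Casselman's congruence-subgroup construction unchanged, with the $\theta$-splitness of $\Theta$ giving $|\beta(s)|_F=1$ for $\beta\in\Theta$ and hence the contraction property on $S_\Theta^-$. Your proposal is sound; the only step I would not call ``routine'' is the Iwahori factorization of $G\cap\bigl(1+\varpi^n\operatorname{End}_{\of}(L)\bigr)$ itself, since one still needs the big-cell argument (or passage to sufficiently large $n$ followed by re-indexing the family, which is harmless for the statement) to see that the block-triangular factors of such an element lie in $G$ and not merely in $\GL(W)$ --- but that is precisely the content of \cite[Proposition 1.4.4]{Casselman-book}, which you explicitly invoke.
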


We refer to a family of $\theta$-stable compact open subgroups $\{K_n\}_{n\geq 0}$ satisfying (1)--(4) of \Cref{lem-adapted-co-family} as adapted to the data $(S_0,A_0,\Delta_0)$.
The construction of $\lambda_N$ depends on Casselman's Canonical Lifting \cite[Proposition 4.1.4]{Casselman-book} along a family of $\theta$-stable compact open subgroups $\{K_n\}_{n\geq 0}$ adapted to $(S_0,A_0,\Delta_0)$.  

We now recall Casselman's Canonical Lifting following \cite[$\S5.2$]{kato--takano2008}.
For any compact open subgroup $K$ of $G$,  the projection operator $\mathcal P_K$ from $V$ to the subspace of $V^K$ of $K$-fixed vectors is given by the integral
\begin{align*}
\mathcal P_K(v) & = \frac{1}{\mu_K(K)} \int_K \pi(k) v \ d\mu_K,
\end{align*}
for any $v\in V$, where $\mu_K$ is a fixed Haar measure on $K$.
The subspace $V(N)$ of $V$ is equal to the union, over compact open subgroups $N_1$ of $N$, of the sets
\begin{align*}
V_0(N_1) & = \left\{ v \in V : \int_{N_1} \pi(n)v \ d\mu_N = 0 \right\},
\end{align*}
where $\mu_N$ is Haar measure on $N$.
Fix an adapted family $\{K_n\}_{n\geq 0}$ as in \Cref{lem-adapted-co-family}. 
Let $[v] \in V_N$
and take a $\theta$-stable compact open subgroup $K = K_n$ from the adapted family such that $[v] \in (V_N)^{M\cap K}$.
There exists a compact open subgroup $N_0$ of $N$ such that $V^K \cap V(N)$ is contained in $V_0(N_0)$.
By \cite[Lemma 2.8]{kato--takano2008}\footnote{Since $S_M^-(\epsilon) \subset A_M^-(\epsilon)$, one may replace $A_M^-(\epsilon)$ by in $S_M^-(\epsilon)$ Casselman's argument in \cite[Proposition 4.1.4]{Casselman-book}.}, there exists a positive real number $0 < \epsilon \leq 1$ such that $sN_0 s^{-1}$ is contained in $N  \cap K$, for all $s\in S_M^-(\epsilon)$.
For any $s\in S_M^-(\epsilon)$, the projection of $V$ onto $V_N$ induces an isomorphism from $\mathcal P_K \left(\pi(s) V^K\right)$ to $(V_N)^{M\cap K}$.
The vector $v \in \mathcal P_K (\pi(s) V^K)$ that satisfies $v + V(N) = [v]$ is called the {canonical lift} of $[v]$ with respect to $K$.  Casselman proves that $v$ depends on the choice of $K$, but the canonical lift $v$ of $[v]$ does not depend on $N_0$ nor $\epsilon$.
By \cite[Proposition 4.1.8]{Casselman-book}, if $v'$ is any other canonical lift of $[v]$ with respect to a subgroup $K'$ of $K$, then $v' \in V^{(M\cap K)(N^{\op}\cap K)}$ and $v = \mathcal P_K(v') = \mathcal P_{N\cap K}(v')$.
The following is \cite[Proposition 5.3]{kato--takano2008}. 

\begin{prop}[Kato--Takano]\label{prop-rP-well-defn}
Let $\lambda$ be an $H$-invariant linear form on an admissible representation $(\pi,V)$ of $G$ and let $P=MN$ be a $\Delta_0$-standard $\theta$-split parabolic subgroup.
\begin{enumerate}
\item For $K = K_n$, $n \geq 1$, in the adapted family $\{K_n\}_{n\geq 0}$ and $v \in V^{(M\cap K)(N^{\op}\cap K)}$, we have that $\ip{\lambda}{v} = \ip{\lambda}{\mathcal P_{N\cap K}(v)}$.
\item Given $[v]\in V_N$, for any two canonical lifts $v, v' \in V$, we have $\ip{\lambda}{v} = \ip{\lambda}{v'}$.
\end{enumerate}
\end{prop}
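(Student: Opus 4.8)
The plan is to treat statement~(1) as the substantive point and to deduce statement~(2) from it. For~(2): if $v$ and $v'$ are canonical lifts of $[v]\in V_N$ with respect to $\theta$-stable members $K$ and $K'$ of the adapted family, choose a member $K''$ of the family contained in $K\cap K'$ and let $v''$ be the canonical lift of $[v]$ with respect to $K''$. By \cite[Proposition 4.1.8]{Casselman-book} we have $v''\in V^{(M\cap K)(N^{\op}\cap K)}$ and $v=\mathcal P_{N\cap K}(v'')$, so (1) gives $\ip{\lambda}{v''}=\ip{\lambda}{v}$; the same argument with $K'$ in place of $K$ gives $\ip{\lambda}{v''}=\ip{\lambda}{v'}$, whence (2).

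For~(1), fix a $\theta$-stable $K=K_n$, $n\geq 1$, from the adapted family and $v\in V^{(M\cap K)(N^{\op}\cap K)}$. Because $K$ has an Iwahori factorization with respect to $P=MN$, a change of variables in the defining integrals shows $\mathcal P_{N\cap K}(v)=\mathcal P_K(v)\in V^K$; hence $w:=v-\mathcal P_{N\cap K}(v)$ lies in $V^{(M\cap K)(N^{\op}\cap K)}$, lies in $V(N)$, and satisfies $\mathcal P_{N\cap K}(w)=0$. Since $\ip{\lambda}{v}-\ip{\lambda}{\mathcal P_{N\cap K}(v)}=\ip{\lambda}{w}$, statement~(1) is equivalent to the vanishing of $\lambda$ on $W:=V^{(M\cap K)(N^{\op}\cap K)}\cap\ker\mathcal P_{N\cap K}$, which by \cite[Proposition 4.1.8]{Casselman-book} coincides with $V^{(M\cap K)(N^{\op}\cap K)}\cap V(N)$.

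It remains to show $\lambda|_W=0$, and this is where the $\theta$-split geometry must be used genuinely. The model is Casselman's proof that the canonical pairing $V_N\times\widetilde V_{N^{\op}}$ is well defined (\cite[Proposition 4.1.4]{Casselman-book}): there a smooth covector fixed by $N\cap K$ is insensitive to the averaging $\mathcal P_{N\cap K}$, and here the $H$-invariant functional $\lambda$ must take over that role. Bare $H$-invariance is not formally enough — inserting or deleting $\mathcal P_{H\cap K}$ merely reshuffles the very identity one is trying to prove — so one exploits that $K$ is $\theta$-stable: then $H\cap K$ is a compact open subgroup of $H$, so $\lambda$ is $(H\cap K)$-invariant, and $N^{\op}\cap K=\theta(N\cap K)$. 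Given $w\in W$ with $\int_{N_1}\pi(n)w\,d\mu_N(n)=0$ for some compact open $N_1\subseteq N$, one uses \cite[Lemma 2.8]{kato--takano2008} and property~(3) of \Cref{lem-adapted-co-family} to conjugate $N_1$ into $N\cap K$ by a suitably dominant $s\in S_M^-(\epsilon)$, controlling the $N$-levels of $w$ and of its $S_M^-$-translates simultaneously with the ($\theta$-enlarged) $N^{\op}$-fixity, and then, playing the $(H\cap K)$-invariance of $\lambda$ against the $\theta$-symmetric Iwahori factorization of $K$ (and, for $n\geq 1$, the pro-$p$ filtration of property~(2) of \Cref{lem-adapted-co-family}), one concludes $\ip{\lambda}{w}=0$. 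I expect this last step — converting the $\theta$-symmetric structure of $K$ into a statement about the one-sided operator $\mathcal P_{N\cap K}$ — to be the main obstacle; it is precisely the point at which Kato and Takano's use of $\theta$-split parabolic subgroups does the work that the contragredient does in Casselman's setting.
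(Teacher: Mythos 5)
Your handling of part (2) is fine (it is the standard reduction via a smaller member of the family and Casselman's Proposition 4.1.8), and your reduction of part (1) to the vanishing of $\lambda$ on $W=V^{(M\cap K)(N^{\op}\cap K)}\cap\ker\mathcal P_{N\cap K}$ is correct but also unnecessary. The problem is that the heart of part (1) is exactly the step you leave open: you never prove $\lambda\vert_W=0$, and you say yourself that you "expect this last step to be the main obstacle." That step is not a technical residue of Casselman's argument that can be absorbed by dominant conjugation: translating $w$ by $\pi(s)$ for $s\in S_M^-(\epsilon)$ changes $\ip{\lambda}{w}$ uncontrollably, since $\lambda$ is only $H$-invariant, so the sketched plan (shrinking $N_1$ into $N\cap K$ via \cite[Lemma 2.8]{kato--takano2008} and property (3) of \Cref{lem-adapted-co-family}) does not close the argument. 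That mechanism belongs to the construction of the canonical lift and to the asymptotic statement \cite[Proposition 5.5]{kato--takano2008}, not to this proposition.

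The missing idea is the decomposition result \cite[Lemma 4.6]{kato--takano2008}: for the $\theta$-stable groups $K=K_n$ of the adapted family one has $N\cap K\subset (H\cap K)(M\cap K)(N^{\op}\cap K)$ — precisely the fact this paper singles out when it extends the proposition to the $\chi$-twisted case (\Cref{prop-twisted-rP-well-defn}). With it, part (1) is immediate and needs no passage through $W$: for $n\in N\cap K$ write $n=hm\bar n$ with $h\in H\cap K$, $m\in M\cap K$, $\bar n\in N^{\op}\cap K$; then for $v\in V^{(M\cap K)(N^{\op}\cap K)}$ one has $\pi(n)v=\pi(h)v$, so $\ip{\lambda}{\pi(n)v}=\ip{\lambda}{v}$ by $H$-invariance, and averaging over $N\cap K$ gives $\ip{\lambda}{\mathcal P_{N\cap K}(v)}=\ip{\lambda}{v}$. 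So your intuition that the $\theta$-symmetric structure of $K$ must convert $H$-invariance into insensitivity to the one-sided averaging is right, but the conversion is effected by this containment of $N\cap K$, which your proposal neither states nor proves; without it the proof of (1), and hence of the proposition, is incomplete.
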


\begin{defn}[Kato--Takano, Lagier]\label{defn-rPlambda}
Let $P = MN$ be a $\theta$-split parabolic subgroup of $G$ with $\theta$-stable Levi subgroup $M = P \cap \theta(P)$,  and unipotent radical $N$.
Let $\lambda \in \Hom_H(\pi,1)$ be nonzero. 
Define the linear form $\lambda_N$ on $V_N$ by declaring that 
\begin{align*}
\ip{\lambda_N}{[v]} &= \ip{\lambda}{v},
\end{align*}
where $v \in V$ is any canonical lift of $[v] \in V_N$ with respect to an adapted family $\{K_n\}_{n\geq 0}$.
\end{defn}
By \Cref{prop-rP-well-defn},  the linear form $\lambda_N$ is well defined and does not depend on the choice of $\{K_n\}_{n\geq 0}$, nor on the choice of canonical lift. 

\begin{prop}[Kato--Takano, Lagier]\label{prop-rPlambda}
Let $(\pi,V)$ be an admissible $H$-distinguished representation of $G$.
Let $\lambda \in \Hom_H(\pi,1)$ be nonzero and let $P$ be a $\theta$-split parabolic subgroup of $G$ with unipotent radical $N$ and $\theta$-stable Levi component $M= P \cap \theta(P)$.
\begin{enumerate}
\item The linear functional $\lambda_N: V_N \rightarrow \C$ is $M^\theta$-invariant.
\item The mapping $\Hom_H(\pi,1) \rightarrow \Hom_{M^\theta}(\pi_N, 1)$, sending $\lambda$ to $\lambda_N$, is linear.
\end{enumerate}
\end{prop}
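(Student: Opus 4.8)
The plan is to dispose of statement (2) first and then concentrate on the $M^\theta$-invariance in (1). For (2): the canonical lift $v$ of a class $[v]\in V_N$ depends only on the adapted family and on the chosen member $K=K_n$, not on $\lambda$, so from $\ip{\lambda_N}{[v]}=\ip{\lambda}{v}$ one reads off at once that $\lambda_N$ is a linear functional on $V_N$ and that $\lambda\mapsto\lambda_N$ is linear; that $\lambda_N$ actually lies in $\Hom_{M^\theta}(\pi_N,1)$ is precisely statement (1). For (1) I would first reduce to the case where $P=P_\Theta=MN$ is a $\Delta_0$-standard $\theta$-split parabolic, conjugating by an element of $(\Hbf\Mbf_0)(F)$ via \Cref{KT08-lem-2.5}; this is the reduction already implicit in \Cref{defn-rPlambda} (canonical lifts along an adapted family are set up for standard $P$), and one must check it is compatible with the formation of $\lambda_N$. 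Fix then an adapted family $\{K_n\}$.

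Two facts drive the argument. First, $\delta_P$ is trivial on $M^\theta$: since $P$ is $\theta$-split, $d\theta$ is an $F$-linear isomorphism $\style n\xrightarrow{\sim}\style n^{\op}$, and for $m\in M^\theta$ (so $\theta(m)=m$) it intertwines $\Ad_{\style n}(m)$ with $\Ad_{\style n^{\op}}(m)$, giving $\delta_P(m)=\delta_{P^{\op}}(m)$; since $\Ad_{\style g}(m)$ and $\Ad_{\style m}(m)$ have $F$-absolute-value determinant $1$ by reductivity of $G$ and $M$, so does $\Ad_{\style g/\style m}(m)$ on $\style g/\style m\cong\style n\oplus\style n^{\op}$, that is, $\delta_P(m)\delta_{P^{\op}}(m)=1$; hence $\delta_P(m)=1$. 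Consequently $\pi_N(m)[v]=\delta_P(m)^{-1/2}[\pi(m)v]=[\pi(m)v]$ for $m\in M^\theta$. Second, for any $u\in V^{(M\cap K)(N^{\op}\cap K)}$ with $K=K_n$ in the family, the averaged vector $\mathcal P_{N\cap K}(u)$ is a canonical lift of $[u]$; combining this with \Cref{prop-rP-well-defn}(1) and \Cref{defn-rPlambda} yields
\[
\ip{\lambda_N}{[u]}=\ip{\lambda}{\mathcal P_{N\cap K}(u)}=\ip{\lambda}{u}.
\]

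Now take $m\in M^\theta$ and $[v]\in V_N$. Fix $K=K_n$ in the family and a canonical lift $v$ of $[v]$ with $v\in V^{(M\cap K)(N^{\op}\cap K)}$ (possible by Casselman's lifting theory as recalled before \Cref{prop-rP-well-defn}), so that $\ip{\lambda_N}{[v]}=\ip{\lambda}{v}$. Since $m\in M$ normalizes $M$ and $N^{\op}$, choose $K'=K_{n'}\subseteq K$ in the family with $M\cap K'\subseteq m(M\cap K)m^{-1}$ and $N^{\op}\cap K'\subseteq m(N^{\op}\cap K)m^{-1}$, so that $\pi(m)v\in V^{(M\cap K')(N^{\op}\cap K')}$. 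Applying the displayed identity with $u=\pi(m)v$ and $K'$, and then using that $\lambda$ is $H$-invariant and $m\in M^\theta\subseteq H$, I obtain
\[
\ip{\lambda_N}{\pi_N(m)[v]}=\ip{\lambda_N}{[\pi(m)v]}=\ip{\lambda}{\pi(m)v}=\ip{\lambda}{v}=\ip{\lambda_N}{[v]},
\]
which is the desired $M^\theta$-invariance.

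The main obstacle is the second of the two facts: verifying that $\mathcal P_{N\cap K}(u)$ is genuinely a canonical lift of $[u]$ requires unwinding Casselman's construction of the canonical lift together with the Iwahori factorization of the $K_n$; in addition one must make precise that the reduction to standard $P$ in the first step is compatible both with the formation of $\lambda_N$ and with the passage from $M^\theta$ to $M_\Theta^\theta$. These are matters of bookkeeping around \cite{Casselman-book} and \cite{kato--takano2008}, but they are where the care lies; the rest is formal manipulation with $\delta_P$, the Jacquet-module normalization, and $H$-invariance of $\lambda$.
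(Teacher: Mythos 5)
Your reduction to standard $P$, your observation that $\delta_P$ is trivial on $M^\theta$ (hence $\pi_N(m)[v]=[\pi(m)v]$), and your treatment of statement (2) are all fine. The flaw is the second of your ``two facts'': it is not true that for an arbitrary $u\in V^{(M\cap K)(N^{\op}\cap K)}$ the vector $\mathcal P_{N\cap K}(u)$ is the canonical lift of $[u]$, and consequently the displayed identity $\ip{\lambda_N}{[u]}=\ip{\lambda}{u}$ fails in general. Casselman's Proposition 4.1.8 (as recalled before \Cref{prop-rP-well-defn}) only says this when $u$ is itself a canonical lift with respect to a deeper $K'$; the canonical lift at level $K$ lies in the special subspace $\mathcal P_K(\pi(s)V^K)$, whereas for your $u$ one only gets $\mathcal P_{N\cap K}(u)=\mathcal P_K(u)\in V^K$. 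Concretely, take $u\in V^{K}\cap V(N)$ nonzero: then $\mathcal P_{N\cap K}(u)=u\neq 0$, while the canonical lift of $[u]=0$ is $0$, so your identity would force $\ip{\lambda}{u}=0$. Since the $K_n$ form a neighbourhood base, running your identity over all $n$ would force $\lambda$ to vanish on all of $V(N)$, i.e.\ $\lambda$ would factor through $V_N$ — false in general (in the group case $G=G'\times G'$, $H=\Delta G'$, it would say Casselman's pairing descends to $\pi_{N'}\otimes\tilde\pi_{\bar N'}$, contradicting nondegeneracy whenever $\pi$ is not supercuspidal). Your main computation applies exactly this false identity to $u=\pi(m)v$, which is fixed by $(M\cap K')(N^{\op}\cap K')$ but is not known to be a canonical lift at any level of the adapted family, so the chain $\ip{\lambda_N}{[\pi(m)v]}=\ip{\lambda}{\pi(m)v}$ is unjustified.

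The argument the paper relies on (Kato--Takano, Proposition 5.6 of \cite{kato--takano2008}, adapted in \Cref{prop-twisted-rPlambda}) avoids this by never evaluating $\lambda$ on lifts directly: one uses the asymptotic characterization of $\lambda_N$ (their Proposition 5.5), namely that $\lambda_N$ is the unique functional with $\ip{\lambda}{\pi(s)v}=\delta_P^{1/2}(s)\ip{\lambda_N}{\pi_N(s)[v]}$ for $s\in S_M^-(\epsilon)$ deep in the dominant cone. Setting $\bar\lambda=\lambda_N\circ\pi_N(m)$, using that $S_M$ is central in $M$ (so $s$ and $m$ commute), that $\delta_P(m)=1$, and that $m\in M^\theta\subset H$ so $\lambda\circ\pi(m)=\lambda$, one checks $\bar\lambda$ satisfies the same asymptotic relation and concludes $\bar\lambda=\lambda_N$ by uniqueness. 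The deep translation by $\pi(s)$ is precisely what replaces your appeal to lifts and cannot be dispensed with; to repair your proof you would need either this uniqueness statement or a genuine equivariance of the canonical-lift construction under conjugation by $m$, neither of which your two facts supply.
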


\begin{proof}
	See, for instance, \cite[Proposition 5.6]{kato--takano2008}.
\end{proof}

\begin{thm}[{\cite[Theorem 7.1]{kato--takano2008}}]\label{kt08-rsc-thm}
Let $(\pi,V)$ be an admissible $H$-distinguished representation of $G$ and let $\lambda$ be a nonzero $H$-invariant linear form on $V$. Then, $(\pi,V)$ is $(H,\lambda)$-relatively supercuspidal if and only if $\lambda_N = 0$ for every proper $\theta$-split parabolic subgroup $P$ of $G$.
\end{thm}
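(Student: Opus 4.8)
The plan is to adapt Casselman's characterization of supercuspidal representations via Jacquet modules to the relative setting, with the descended form $\lambda_N$ of \Cref{defn-rPlambda} playing the role of the Jacquet module. Recall from \eqref{eq-support} that $\supp(\varphi_{\lambda,v})$ is a union of $Z_GH$-cosets, so the relevant condition (\Cref{defn-rsc}) is that this support be compact in $G/Z_GH$. For the direction that relative supercuspidality forces $\lambda_N = 0$, I would argue contrapositively: assume $\lambda_N \neq 0$ for some proper $\theta$-split $P = MN$. By \Cref{KT08-lem-2.5} one may conjugate $P$ into a $\Delta_0$-standard $\theta$-split $P_\Theta$ with $\Theta \subsetneq \Delta_0$, so that $S_M = S_\Theta$ strictly contains the $(\theta,F)$-split component $S_G$ of $Z_G$ and the dominant cone $S_\Theta^-$ has non-compact image in $G/Z_GH$. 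Pick $[v] \in V_N$ with $\langle\lambda_N,[v]\rangle \neq 0$ and a canonical lift $v$ along an adapted family (\Cref{lem-adapted-co-family}), so that $\langle\lambda,v\rangle = \langle\lambda_N,[v]\rangle \neq 0$. Casselman's canonical-lifting formalism (see \cite[\S 5.2]{kato--takano2008}) shows that for $s$ in a subcone $S_\Theta^-(\epsilon)$, with $\epsilon$ depending on $v$, the vector $\pi(s)v$ is again a canonical lift, whence
\begin{align*}
\varphi_{\lambda,v}(s^{-1}) = \langle\lambda,\pi(s)v\rangle = \delta_P(s)^{1/2}\,\langle\lambda_N,\pi_N(s)[v]\rangle .
\end{align*}
By admissibility of $\pi_N$ the right-hand side is a finite sum of quasi-characters of $S_M$ times polynomial functions of $s$, and it is nonzero at $s = 1$; hence it does not vanish identically, and is nonzero on a subset of $S_\Theta^-(\epsilon)$ with non-compact image in $G/Z_GH$. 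Therefore $\supp(\varphi_{\lambda,v})$ is not compact in $G/Z_GH$, and $(\pi,V)$ is not $(H,\lambda)$-relatively supercuspidal.

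For the converse, assume $\lambda_N = 0$ for every proper $\theta$-split $P = MN$ and fix $v \in V$; the goal is to show $\varphi_{\lambda,v}$ has compact support in $G/Z_GH$. First I would invoke a relative Cartan (polar) decomposition $G = \bigcup_{j=1}^{m} \Omega_j\,S_{0,j}^-\,H$, with $\Omega = \bigcup_j \Omega_j$ compact and finitely many maximal $(\theta,F)$-split tori. By \eqref{eq-support} it then suffices to show, for one pair $(\Omega, S_0^-)$, that $\varphi_{\lambda,v}$ vanishes on $\Omega S_0^-$ outside a subset with compact image in $G/Z_GH$. Since $\pi(\Omega^{-1})v$ is a compact (indeed finite) subset of the smooth module $V$, it lies in $V^{K'}$ for a single compact open subgroup $K'$. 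The relative asymptotic estimates of \cite[\S\S 5--6]{kato--takano2008}, applied simultaneously to all standard proper $\theta$-split subsets $\Theta \subsetneq \Delta_0$, then furnish a single $\epsilon_0 \in (0,1]$ such that for every such $\Theta$, every $w \in V^{K'}$, and every $s \in S_\Theta^-(\epsilon_0)$, the quantity $\langle\lambda,\pi(s)^{\pm 1}w\rangle$ along a ray into that face is governed by a descended form $\lambda_{N'}$ attached to a proper $\theta$-split parabolic $P' = M'N'$ (one of $P_\Theta$, $P_\Theta^{\op}$), and hence equals $0$ by hypothesis.

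It remains to cover $S_0^-$ compatibly with these estimates. For $s \in S_0^-$ put $\Theta(s) = \Delta_0^\theta \cup \{\alpha \in \Delta_0 \setminus \Delta_0^\theta : |\alpha(s)|_F > \epsilon_0\}$, a $\theta$-split subset of $\Delta_0$. If $\Theta(s) = \Delta_0$, then $|\bar\alpha(s)|_F \in (\epsilon_0, 1]$ for every $\bar\alpha \in \overline\Delta_0$, and since $\overline\Delta_0$ is a basis of $X^*(S_0/S_G)_{\Q}$ the element $s$ is confined to a set with compact image in $G/Z_GH$. If $\Theta(s) \subsetneq \Delta_0$, then $P_{\Theta(s)}$ is a proper $\theta$-split parabolic, and using again that $\overline\Delta_0$ is a basis one writes $s = s_1 s_2$ with $s_2 \in S_{\Theta(s)}^-(\epsilon_0)$ and $s_1$ in a fixed compact subset of $S_0$; then
\begin{align*}
\varphi_{\lambda,v}(\omega s) = \langle\lambda,\pi(s_2^{-1})\,\pi\big((\omega s_1)^{-1}\big)v\rangle = 0,
\end{align*}
since $\pi((\omega s_1)^{-1})v$ lies in $V^{K'}$ (after enlarging $K'$ to absorb the compact set containing the $s_1$) and the governing form vanishes by hypothesis. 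As $\Theta(\cdot)$ takes only finitely many values, $\varphi_{\lambda,v}$ vanishes on $\Omega S_0^-$ off a set with compact image in $G/Z_GH$, which is what we wanted.

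The main obstacle lies entirely in this second half. Two points require care: producing the relative Cartan decomposition of $G/H$ (which reduces the global support question to rays in the split tori), and organizing the covering of $S_0^-$ by the ``deep faces'' $S_\Theta^-(\epsilon_0)$ so that the constants — the cutoff $\epsilon_0$, the level $K'$, and the compact set containing the translating factors $s_1$ — can be chosen consistently rather than in a circular fashion; this is precisely the book-keeping carried out in \cite[\S\S 6--7]{kato--takano2008}. Everything ultimately rests on the adapted family of \Cref{lem-adapted-co-family} and Casselman's canonical lifting, and it is there that the standing hypotheses on $F$ intervene.
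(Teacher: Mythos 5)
Your outline is essentially the route the paper itself takes: the paper states this theorem as a quotation of Kato--Takano and proves its $\chi$-twisted generalization (\Cref{thm-twisted-rsc-condition}) by exactly the two mechanisms you describe --- the asymptotic identity $\ip{\lambda}{\pi(s)v}=\delta_P^{1/2}(s)\ip{\lambda_N}{\pi_N(s)[v]}$ on a deep cone combined with $S_M$-finiteness (the exponential-polynomial argument) for the ``relatively supercuspidal $\Rightarrow\lambda_N=0$'' direction, and the relative Cartan decomposition together with deep-cone vanishing of relative matrix coefficients (\Cref{lem-twisted-kt-6-7}, the analogue of \cite[Lemma 6.7]{kato--takano2008}) and a covering of $S_0^-$ by the sets $S_\Theta^-(\epsilon_0)$ for the converse. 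The only blemishes are minor: you must shrink, not ``enlarge,'' $K'$ so that it fixes the finitely many vectors $\pi\bigl((\omega s_1)^{-1}\bigr)v$, and the non-circular choice of $\epsilon_0$, $K'$, and the compact set containing the factors $s_1$ (as well as the twisting by the finitely many elements $x\in(\Hbf\Mbf_0)(F)$, handled in the paper via the analogue of \cite[Lemma 6.6]{kato--takano2008}) is precisely the bookkeeping carried out in \cite[\S 6]{kato--takano2008}, to which the paper itself defers.
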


\subsection{$(M^\theta, \chi \vert_{M^\theta})$-equivariant linear forms on Jacquet modules}\label{sec-equivariant-kt08}

With only minor modifications to the arguments, the results of \cite{kato--takano2008} can be generalized to $(H,\chi)$-distinguished representations with $\chi$ nontrivial.
We keep the notation of \Cref{sec-invariant-kt08}.

Let $(\pi,V)$ be an admissible $(H,\chi)$-distinguished representation of $G$.
Let $\lambda \in \Hom_H(\pi,\chi)$ be nonzero.
Let $P = MN$ be a $\theta$-split parabolic subgroup of $G$ with $\theta$-stable Levi factor $M = P \cap \theta(P)$ and unipotent radical $N$.
Let $S_M$ be the $(\theta,F)$-split component of $M$.

As above, we may regard $P$ as standard with respect to some choice of data $(S_0, A_0, \Delta_0)$.
Let $\{K_n\}_{n\geq 0}$ be an adapted family of $\theta$-stable compact open subgroups as in \Cref{lem-adapted-co-family}.
Since $\{K_n\}_{n\geq 0}$ forms a neighbourhood basis at the identity of $G$, with each $K_n$ $\theta$-stable, and $\chi:H\rightarrow \C^\times$ is continuous,  we have that for all $n$ large enough $K_n \cap H$ is contained in $\ker \chi$.
The following generalizes \Cref{prop-rP-well-defn} (\cite[Proposition 5.3]{kato--takano2008}) and the proof is the same, as long as one takes $n$ large enough so that $K_n \cap H \subset \ker \chi$ and notices that \cite[Lemma 4.6]{kato--takano2008} actually gives $N\cap K \subset (H\cap K)(M\cap K)(N^{\op} \cap K)$.  

\begin{prop}\label{prop-twisted-rP-well-defn}
	Let $(\pi,V)$ be an admissible $(H,\chi)$-distinguished representation of $G$.  Let $\lambda \in \Hom_H(\pi,\chi)$ be nonzero.  Let $P=MN$ be a $\Delta_0$-standard $\theta$-stable parabolic subgroup of $G$.
	\begin{enumerate}
		\item Let $K = K_n$ be a member of the adapted family $\{K_n\}_{n\geq 0}$ such that $K\cap H \subset \ker \chi$, then for any $v \in V^{(M\cap K)(N^{\op}\cap K)}$ we have $\ip{\lambda}{v} = \ip{\lambda}{\mathcal P_{N\cap K}(v)}$
		\item For any two canonical lifts $v$ and $v'$ of a fixed $[v] \in V_N$, we have $\ip{\lambda}{v} = \ip{\lambda}{v'}$.
	\end{enumerate}
\end{prop}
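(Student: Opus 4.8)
The plan is to imitate verbatim the proof of \Cref{prop-rP-well-defn} (i.e.\ \cite[Proposition 5.3]{kato--takano2008}), keeping track of where $H$-invariance of $\lambda$ was used and replacing it with the weaker relation $\lambda(\pi(h)v) = \chi(h)\lambda(v)$, which is harmless once $K\cap H$ lies in $\ker\chi$. For part (1), fix $K = K_n$ with $K\cap H\subset\ker\chi$ and take $v\in V^{(M\cap K)(N^{\op}\cap K)}$. First I would write $\mathcal P_{N\cap K}(v)$ as the normalized integral $\frac{1}{\mu(N\cap K)}\int_{N\cap K}\pi(n)v\,d\mu(n)$ and compute $\ip{\lambda}{\mathcal P_{N\cap K}(v)}$. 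The key input is the refinement of \cite[Lemma 4.6]{kato--takano2008} noted in the excerpt, namely $N\cap K \subset (H\cap K)(M\cap K)(N^{\op}\cap K)$: writing a typical $n\in N\cap K$ as $n = h_n m_n \bar n_n$ with $h_n\in H\cap K$, $m_n\in M\cap K$, $\bar n_n\in N^{\op}\cap K$, one gets
\[
\pi(n)v = \pi(h_n)\pi(m_n)\pi(\bar n_n)v = \pi(h_n)v,
\]
since $v$ is fixed by $(M\cap K)(N^{\op}\cap K)$. Applying $\lambda$ and using $h_n\in K\cap H\subset\ker\chi$ gives $\ip{\lambda}{\pi(n)v} = \chi(h_n)\ip{\lambda}{v} = \ip{\lambda}{v}$, so the integrand is constant and $\ip{\lambda}{\mathcal P_{N\cap K}(v)} = \ip{\lambda}{v}$.

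For part (2), I would follow Casselman's argument exactly as in \cite[Proposition 5.3, 5.6]{kato--takano2008}. Given $[v]\in V_N$ with two canonical lifts $v$ (with respect to $K$) and $v'$ (with respect to $K'$), one may assume $K'\subset K$ by passing further down the adapted family and using that a canonical lift with respect to a smaller group is again a canonical lift with respect to the larger one after applying $\mathcal P_K$; more precisely, by \cite[Proposition 4.1.8]{Casselman-book} one has $v' \in V^{(M\cap K)(N^{\op}\cap K)}$ and $v = \mathcal P_K(v') = \mathcal P_{N\cap K}(v')$. Then part (1), applied to $v'\in V^{(M\cap K)(N^{\op}\cap K)}$, yields $\ip{\lambda}{v'} = \ip{\lambda}{\mathcal P_{N\cap K}(v')} = \ip{\lambda}{v}$, which is the claim. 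Throughout one only ever integrates $\pi(n)v$ for $n\in N\cap K$ or $\mathcal P_K$-type averages over $K$, so the continuity of $\chi$ together with $K\cap H\subset\ker\chi$ means every factor of $\chi$ that appears is trivial.

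The only genuinely new point — and hence the main obstacle — is verifying that \cite[Lemma 4.6]{kato--takano2008} really does deliver the sharpened Iwahori-type factorization $N\cap K\subset (H\cap K)(M\cap K)(N^{\op}\cap K)$ (rather than just $N\cap K\subset H(M\cap K)(N^{\op}\cap K)$ with an uncontrolled $H$-component), so that the $H$-component lands inside $\ker\chi$; this is what forces the hypothesis ``$K_n\cap H\subset\ker\chi$'' and is why one cannot take an arbitrary member of the adapted family. Granting that refinement, the rest is a line-by-line transcription of Kato--Takano's proof with $1$ replaced by $\chi$, and no further estimates are needed since all the analytic content (existence of $\epsilon$, the isomorphism $\mathcal P_K(\pi(s)V^K)\xrightarrow{\sim}(V_N)^{M\cap K}$, independence of the lift from $N_0$ and $\epsilon$) is inherited unchanged from the untwisted setting.
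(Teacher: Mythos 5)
Your proposal is correct and follows essentially the same route as the paper, which simply declares the proof to be that of \cite[Proposition 5.3]{kato--takano2008} once one takes $K_n\cap H\subset\ker\chi$ and uses the refined factorization $N\cap K\subset (H\cap K)(M\cap K)(N^{\op}\cap K)$ from \cite[Lemma 4.6]{kato--takano2008} --- precisely the two points you isolate and then spell out in detail.
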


\Cref{prop-twisted-rP-well-defn} allows us to define $\lambda_{N,\chi} \in \Hom_{M^\theta}(\pi_N, \chi \vert_{M^\theta})$ in exactly the same manner as $\lambda_N$ is defined in \Cref{defn-rPlambda}.

\begin{defn}\label{defn-twisted-rPlambda}
Let $P = MN$ be a $\theta$-split parabolic subgroup of $G$ with $\theta$-stable Levi subgroup $M$,  and unipotent radical $N$.
Let $\lambda \in \Hom_H(\pi,\chi)$ be nonzero. 
Define the linear form $\lambda_{N,\chi}$ on $V_N$ by declaring that 
\begin{align*}
\ip{\lambda_{N,\chi}}{[v]} &= \ip{\lambda}{v},
\end{align*}
where $v \in V$ is any canonical lift of $[v] \in V_N$ with respect to an adapted family $\{K_n\}_{n\geq 0}$.
\end{defn}
By \Cref{prop-twisted-rP-well-defn}, the linear form $\lambda_{N,\chi}$ is well defined and does not depend on the choice of $\{K_n\}_{n\geq 0}$, nor on the choice of canonical lift. 
Moreover, if $\pi$ is $H$-distinguished and $\lambda \in \Hom_{H}(\pi,1)$, then $\lambda_{N,1} = \lambda_N$.

Proposition 5.5 of \cite{kato--takano2008} (\textit{cf.}~\cite[Th\'eor\`{e}me 2]{lagier2008}) holds with $\lambda_N$ replaced by $\lambda_{N,\chi}$.\footnote{Below, when applying the analogue of  \cite[Proposition 5.5]{kato--takano2008} in the context of $\lambda_{N,\chi}$ we will refer to Kato and Takano's result without further comment.}  Again, to adjust the proof, go far enough into the adapted family $\{K_n\}$ to ensure that $K_n \cap H \subset \ker \chi$.
The next result is the analogue of \Cref{prop-rPlambda} (\cite[Proposition 5.6]{kato--takano2008}) and has also been obtained by Delorme \cite[Section 3.1]{delorme2010} 
 via the methods of Lagier \cite{lagier2008}.

\begin{prop}\label{prop-twisted-rPlambda}
	Let $(\pi,V)$ be an admissible $(H, \chi)$-distinguished representation of $G$.  Let $\lambda \in \Hom_H(\pi,\chi)$ be nonzero.  Let $P = MN$ be a $\theta$-split parabolic subgroup of $G$ with $\theta$-stable Levi subgroup $M = P \cap \theta(P)$ and unipotent radical $N$.
	\begin{enumerate}
		\item The linear functional $\lambda_{N,\chi}: V_N \rightarrow \C$ lies in $\Hom_{M^\theta}(\pi_N, \chi \vert_{M^\theta})$.
		\item The mapping $\Hom_H(\pi,\chi) \rightarrow \Hom_{M^\theta}(\pi_N, \chi \vert_{M^\theta})$, sending $\lambda$ to $\lambda_{N,\chi}$ is linear.
	\end{enumerate}
\end{prop}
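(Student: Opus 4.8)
The plan is to follow Kato and Takano's proof of \cite[Proposition 5.6]{kato--takano2008} (recalled above as \Cref{prop-rPlambda}), carrying the quasi-character $\chi$ throughout; the only genuinely new feature is that one must work sufficiently deep in the adapted family $\{K_n\}_{n\geq 0}$ so that $K_n\cap H\subseteq\ker\chi$, exactly as in \Cref{prop-twisted-rP-well-defn}. Part (2) is immediate from \Cref{defn-twisted-rPlambda}: the canonical lift of a class $[v]\in V_N$ with respect to a fixed adapted family does not refer to $\lambda$, so for $\lambda_1,\lambda_2\in\Hom_H(\pi,\chi)$, scalars $a,b$, and a single canonical lift $v$ of $[v]$ one has $\ip{(a\lambda_1+b\lambda_2)_{N,\chi}}{[v]}=\ip{a\lambda_1+b\lambda_2}{v}=a\ip{(\lambda_1)_{N,\chi}}{[v]}+b\ip{(\lambda_2)_{N,\chi}}{[v]}$.

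For part (1) I would fix $m\in M^\theta$ and $[v]\in V_N$ and aim to prove $\ip{\lambda_{N,\chi}}{\pi_N(m)[v]}=\chi(m)\ip{\lambda_{N,\chi}}{[v]}$. First record that $\delta_P(m)=1$: since $P$ is $\theta$-split, $\theta$ carries the unipotent radical $N$ of $P$ onto that of the opposite parabolic, whence $\delta_P(\theta(m))=\delta_P(m)^{-1}$, and $\theta(m)=m$ forces $\delta_P(m)^2=1$. Hence the normalized action of $m$ on $V_N$ coincides with the unnormalized one, so $\pi(m)v$ represents $\pi_N(m)[v]$ in $V_N$ for any representative $v$ of $[v]$. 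Now take $K=K_n$ in the adapted family with $n$ large enough that $[v]\in(V_N)^{M\cap K}$ and $K\cap H\subseteq\ker\chi$, and let $v$ be the canonical lift of $[v]$ with respect to $K$, so $v\in\mathcal P_K(\pi(s)V^K)$ for $s$ deep in $S_M^-(\epsilon)$. Because $S_M$ lies in the centre of $M$, $\pi(m)$ and $\pi(s)$ commute, and $\pi(m)\mathcal P_K=\mathcal P_{mKm^{-1}}\pi(m)$; therefore $\pi(m)v\in\mathcal P_{mKm^{-1}}(\pi(s)V^{mKm^{-1}})$, which together with the previous sentence exhibits $\pi(m)v$ as a canonical lift of $\pi_N(m)[v]$ with respect to the compact open subgroup $mKm^{-1}$.

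The crux --- and the main obstacle --- is that $mKm^{-1}$ need not belong to the chosen adapted family, so \Cref{prop-twisted-rP-well-defn} and the well-definedness underlying \Cref{defn-twisted-rPlambda} cannot be invoked for it as black boxes. The resolution is that those statements depend only on $\theta$-stability, an Iwahori factorization with respect to $P$ with the contracting property along $S_M^-$, the refined decomposition $N\cap K\subseteq(H\cap K)(M\cap K)(N^{\op}\cap K)$ of \cite[Lemma 4.6]{kato--takano2008}, and $K\cap H\subseteq\ker\chi$ --- none of which is particular to the family $\{K_n\}$. Since $m\in M^\theta$ normalizes $M$, $N$, $N^{\op}$ and commutes with $\theta$, the group $mKm^{-1}$ is $\theta$-stable and inherits an Iwahori factorization with respect to $P$, and because $s\in S_M$ is central in $M$ the contracting property is preserved; moreover $mK_nm^{-1}\cap H\subseteq\ker\chi$ once $n$ is large, because conjugation by the fixed element $m$ uniformly shrinks the $K_n$. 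Hence the arguments of \Cref{prop-twisted-rP-well-defn} apply verbatim to $mKm^{-1}$, giving $\ip{\lambda_{N,\chi}}{\pi_N(m)[v]}=\ip{\lambda}{\pi(m)v}$. Finally, $m\in M^\theta\subseteq H$ and $\lambda\in\Hom_H(\pi,\chi)$, so $\ip{\lambda}{\pi(m)v}=\chi(m)\ip{\lambda}{v}=\chi(m)\ip{\lambda_{N,\chi}}{[v]}$, which completes the proof. If one prefers to avoid appealing to non-family subgroups, the same conclusion follows by comparing the canonical lift of $\pi_N(m)[v]$ with respect to $mKm^{-1}$ against its canonical lift with respect to a member $K'=K_{n'}$ of the family contained in $K\cap mKm^{-1}$, using Casselman's compatibility of canonical lifts \cite[Proposition 4.1.8]{Casselman-book} together with \Cref{prop-twisted-rP-well-defn}(1) applied to $K'$.
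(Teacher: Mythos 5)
Your proof is correct, but for part (1) it takes a genuinely different route from the paper. The paper simply transports Kato and Takano's proof of \cite[Proposition 5.6]{kato--takano2008}: for $m \in M^\theta$ one forms the candidate functional $\lambda' = \chi(m)^{-1}\,\lambda_{N,\chi}\circ\pi_N(m)$ and checks, using that $s \in S_M$ is central in $M$, that $\delta_P(m)=1$, and that $\ip{\lambda}{\pi(m)w}=\chi(m)\ip{\lambda}{w}$, that $\lambda'$ satisfies the asymptotic characterization of $\lambda_{N,\chi}$ (the $\chi$-analogue of \cite[Proposition 5.5]{kato--takano2008}: $\ip{\lambda}{\pi(s)v}=\delta_P^{1/2}(s)\ip{\lambda_{N,\chi}}{\pi_N(s)[v]}$ for $s$ deep in $S_M^-(\epsilon)$, together with the uniqueness statement), so that uniqueness forces $\lambda_{N,\chi}\circ\pi_N(m)=\chi(m)\lambda_{N,\chi}$. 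You instead argue directly with canonical lifts: you exhibit $\pi(m)v$ as a canonical lift of $\pi_N(m)[v]$ with respect to the conjugate subgroup $mKm^{-1}$, and justify extending \Cref{prop-twisted-rP-well-defn} and the comparison of lifts via \cite[Proposition 4.1.8]{Casselman-book} beyond the adapted family, by checking that the needed inputs ($\theta$-stability, Iwahori factorization with respect to $P$, contraction along $S_M^-$ using that $s$ commutes with $m$, the decomposition $N\cap K\subset(H\cap K)(M\cap K)(N^{\op}\cap K)$, and $H\cap K\subset\ker\chi$) are all inherited by $mKm^{-1}$, since $m\in M^\theta\subset H$ normalizes $M$, $N$, $N^{\op}$, $H$ and preserves $\ker\chi$. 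What each approach buys: the paper's argument never leaves the chosen adapted family and is very short once the $\chi$-analogue of \cite[Proposition 5.5]{kato--takano2008} is granted; yours avoids that asymptotic characterization altogether, at the cost of the (correct, and carefully handled) verification that the canonical-lift machinery is not tied to the family. One small remark: in your closing fallback the instance of \Cref{prop-twisted-rP-well-defn}(1) that is actually needed is the one for the group $mKm^{-1}$ (to handle $\mathcal P_{N\cap mKm^{-1}}(u')$), not the one for $K'$; your main argument already supplies exactly this, so nothing essential is missing. Part (2) is the same trivial observation as in the paper.
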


\begin{proof}
The proof is the same as that of \cite[Proposition 5.6]{kato--takano2008} after taking into account the character $\chi$ and replacing Kato and Takano's $\bar \lambda = \lambda_N \circ \pi_N(m)$ with $\lambda ' = \chi(m)^{-1} \lambda_{N,\chi}\circ \pi_N(m)$.
\end{proof}

The analogue of \cite[Proposition 5.9]{kato--takano2008} (\textit{cf}.~\cite[Proposition 3.9]{delorme2010}) also holds.
\begin{prop}\label{prop-twisted-transitive}
Let $(\pi,V)$ be an admissible $(H, \chi)$-distinguished representation of $G$.  Let $\lambda \in \Hom_H(\pi,\chi)$ be nonzero.
	Let $P' = M'N' \subset P = MN$ be two $\theta$-split parabolic subgroups of $G$ with the indicated Levi decompositions.
	The following diagram commutes:
\[
\begin{tikzcd}
  \Hom_H(\pi,\chi) \arrow[r, "\lambda \mapsto \lambda_{N,\chi}"] \arrow[d, "\lambda \mapsto \lambda_{N',\chi}"]
    & \Hom_{M^\theta}(\pi_N, \chi\vert_{M^\theta}) \arrow[d, "\lambda_{N,\chi} \mapsto (\lambda_{N,\chi})_{N', \chi\vert_{M^\theta}}"] \\
  \Hom_{(M')^\theta}\left(\pi_{N'}, \chi\vert_{(M')^\theta}\right) \arrow[r, equal, "\sim"]
&\Hom_{(M')^\theta}\left((\pi_N)_{N'\cap M}, \chi\vert_{(M')^\theta}\right) \end{tikzcd}
\]
where the bottom isomorphism follows from the transitivity of the Jacquet restriction functor, i.e., $\pi_{N'} \cong (\pi_N)_{N' \cap M}$.
Identifying $\pi_{N'}$ and  $(\pi_N)_{N' \cap M}$, we have that $\lambda_{N',\chi} = (\lambda_{N,\chi})_{N', \chi\vert_{M^\theta}}$.
\end{prop}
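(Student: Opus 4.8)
The plan is to follow the proof of \cite[Proposition 5.9]{kato--takano2008}, the only new ingredient being the bookkeeping that makes \Cref{prop-twisted-rP-well-defn} available, namely going far enough into an adapted family $\{K_n\}_{n\geq 0}$ that $K_n\cap H\subset\ker\chi$. Both functionals appearing in the proposition lie in $\Hom_{(M')^\theta}(\pi_{N'},\chi\vert_{(M')^\theta})$: the functional $\lambda_{N',\chi}$ by \Cref{prop-twisted-rPlambda} applied to $(\pi,V)$ and $P'$, and $(\lambda_{N,\chi})_{N'\cap M,\,\chi\vert_{M^\theta}}$ by \Cref{prop-twisted-rPlambda} applied \emph{inside} $M$ to the admissible $(M^\theta,\chi\vert_{M^\theta})$-distinguished representation $(\pi_N,V_N)$ (admissibility of $\pi_N$ being standard, and distinction witnessed by $\lambda_{N,\chi}$, again via \Cref{prop-twisted-rPlambda}) and the $\theta$-split parabolic subgroup $P'\cap M=M'(N'\cap M)$ of $M$. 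It therefore suffices to show the two functionals agree on an arbitrary class $[v]\in\pi_{N'}$, where throughout $\pi_{N'}$ is identified with $(\pi_N)_{N'\cap M}$ via transitivity of the normalized Jacquet functor; on underlying spaces this identification is the canonical isomorphism $V/V(N')\cong\bigl(V/V(N)\bigr)/\bigl(V_N(N'\cap M)\bigr)$, which is independent of the $\delta$-normalizations.

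First I would fix data $(S_0,A_0,\Delta_0)$ with respect to which both $P=P_\Theta$ and $P'=P_{\Theta'}$ are standard, with $\Theta'\subset\Theta$ $\theta$-split subsets of $\Delta_0$, and fix an adapted family $\{K_n\}_{n\geq 0}$ as in \Cref{lem-adapted-co-family}; by continuity of $\chi$ and $\theta$-stability of the $K_n$, I may assume $K_n\cap H\subset\ker\chi$ for every $n$ that occurs below, so that \Cref{prop-twisted-rP-well-defn} applies to $\lambda$ on $(\pi,V)$ (along $P$ and along $P'$) and to $\lambda_{N,\chi}$ on $(\pi_N,V_N)$ (along $P'\cap M$, via the family $\{M\cap K_n\}$). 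I would then record the comparison of dominant cones forced by $\Theta'\subset\Theta$: one has $S_M\subset S_{M'}$ and, directly from \eqref{eq-split-dominant-part}, $S_M^-(\epsilon)\subset S_{M'}^-(\epsilon)$ for every $0<\epsilon\leq 1$, and moreover the product of an element of $S_{M'}^-(\epsilon)$ with an element of $S_M^-(\epsilon)$ again lies in $S_{M'}^-(\epsilon)$ (split the roots of $\Delta_0\setminus\Theta'$ into those in $\Theta\setminus\Theta'$, killed by $S_M$, and those in $\Delta_0\setminus\Theta$, contracted by both factors).

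The technical heart is the staged (transitive) behaviour of Casselman's canonical lifting already used in \cite[\S5]{kato--takano2008} (cf.\ \cite[\S4.1]{Casselman-book}): given $[v]\in\pi_{N'}$ and $K=K_n$ large enough that $[v]\in(\pi_{N'})^{M'\cap K}$, one first produces, via Casselman's construction inside $M$ with a sufficiently contractive $s_1\in S_{M'}^-(\epsilon)$, a canonical lift $w\in V_N$ (with respect to $M\cap K$) of the image $\overline{[v]}\in(\pi_N)_{N'\cap M}$, and then, via Casselman's construction in $G$ with a sufficiently contractive $s_2\in S_M^-(\epsilon)$, a canonical lift $\tilde w\in V$ (with respect to $K$) of $w$. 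Using $s=s_1s_2\in S_{M'}^-(\epsilon)$ together with Casselman's compatibility of $\mathcal P_{N\cap K}$ with $\mathcal P_{N'\cap M\cap K}\circ\mathcal P_{M\cap K}$, one checks that $\tilde w$ is \emph{also} a canonical lift of $[v]$, now regarded in $\pi_{N'}$, with respect to $K$. Granting this, the proof concludes in two lines: \Cref{defn-twisted-rPlambda} applied inside $M$ gives $\ip{(\lambda_{N,\chi})_{N'\cap M,\,\chi\vert_{M^\theta}}}{[v]}=\ip{\lambda_{N,\chi}}{w}$; \Cref{defn-twisted-rPlambda} applied to $(\pi,V)$ along $P$ gives $\ip{\lambda_{N,\chi}}{w}=\ip{\lambda}{\tilde w}$; and since $\tilde w$ is a canonical lift of $[v]\in\pi_{N'}$, \Cref{defn-twisted-rPlambda} applied along $P'$, which is well defined by \Cref{prop-twisted-rP-well-defn}, gives $\ip{\lambda}{\tilde w}=\ip{\lambda_{N',\chi}}{[v]}$. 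Chaining these equalities yields $\lambda_{N',\chi}=(\lambda_{N,\chi})_{N'\cap M,\,\chi\vert_{M^\theta}}$, which is the asserted identity and hence the commutativity of the diagram.

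I expect the one genuine obstacle to be exactly this staged-lift verification in the $\theta$-split setting: one must know that the element $s$ witnessing the canonical lift of $[v]\in\pi_{N'}$ to $V$ can be taken $(\theta,F)$-split, lying in $S_{M'}^-(\epsilon)$, and compatibly factored as $s=s_1s_2$ so that Casselman's projection identities apply verbatim. This is precisely what \cite[\S5]{kato--takano2008} carries out — with \cite[Lemma 2.8]{kato--takano2008} allowing one to replace Casselman's $A_M^-(\epsilon)$ by $S_M^-(\epsilon)$ throughout — so in the present generality the only new point is the uniform choice of $n$ with $K_n\cap H\subset\ker\chi$, which is harmless. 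Everything else ($M^\theta$- and $(M')^\theta$-equivariance of the forms, admissibility of $\pi_N$, the fact that $P'\cap M$ is $\theta$-split in $M$, and the normalization identity $\delta_{P'}=\delta_P\,\delta_{P'\cap M}$ on $M'$) carries over unchanged from the $\chi=1$ case.
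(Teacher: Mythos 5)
Your overall strategy differs from the paper's: the paper follows Kato--Takano's proof of \cite[Proposition 5.9]{kato--takano2008} (with the $\chi$-bookkeeping you describe), deducing the identity from the asymptotic characterization of the descended form, i.e.\ from \cite[Proposition 5.5]{kato--takano2008} --- and, crucially, it replaces \cite[Proposition 5.5(1)]{kato--takano2008} by Lagier's result \cite[Th\'eor\`eme 2]{lagier2008}, precisely because the cone comparison that Kato and Takano invoke fails when $M'\subsetneq M$. Your proposal instead rests on a ``staged canonical lifting'' claim: that the canonical lift $\tilde w\in V$ (with respect to $P$) of the canonical lift $w\in V_N$ (with respect to $P'\cap M$ inside $M$) of $[v]$ is again a canonical lift of $[v]$ with respect to $P'$. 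Granting that claim, your two-line conclusion via \Cref{defn-twisted-rPlambda} is correct, and your $\chi$-adjustments (choosing $n$ with $K_n\cap H\subset\ker\chi$ so that \Cref{prop-twisted-rP-well-defn} and \Cref{prop-twisted-rPlambda} apply) are fine. But the staged-lift claim is exactly the point you do not prove, and your appeal to it being ``precisely what \cite[\S5]{kato--takano2008} carries out'' is not accurate: Kato--Takano do not prove any transitivity of Casselman's canonical lifting; their Proposition 5.9 is deduced from the asymptotic expansion of Proposition 5.5, not from a lift-of-a-lift argument, and \cite[\S4.1]{Casselman-book} does not contain the compatibility $\mathcal P_{N\cap K}$ versus $\mathcal P_{(N'\cap M)\cap K}\circ\mathcal P_{M\cap K}$ in the form you need. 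Showing that $\tilde w$ lies in $\mathcal P_K\bigl(\pi(s_1s_2)V^K\bigr)$ (or verifying any other characterization of the canonical lift of $[v]$ along $P'$) is a genuine argument about Iwahori factorizations relative to both $P$ and $P'$ and is the real content of the proposition; as written, your proof defers it to sources that do not contain it.

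A second, related problem: your recorded containment $S_M^-(\epsilon)\subset S_{M'}^-(\epsilon)$ for all $0<\epsilon\leq 1$ is false whenever $\Theta'\subsetneq\Theta$ and $\epsilon<1$, since every $\alpha\in\Theta\setminus\Theta'$ satisfies $|\alpha(s)|_F=1$ for $s\in S_M=S_\Theta$. (Your product statement $S_{M'}^-(\epsilon)\cdot S_M^-(\epsilon)\subset S_{M'}^-(\epsilon)$ is correct and is what your argument actually uses, but the false containment is the mirror image of the very slip in Kato--Takano's published proof that the paper's remark after \Cref{prop-twisted-transitive} flags, namely their use of $S_{M'}^-(\epsilon)\subset S_M^-(\epsilon)$.) This is the delicate point of the whole proposition: the paper's route repairs it by substituting \cite[Th\'eor\`eme 2]{lagier2008} for \cite[Proposition 5.5(1)]{kato--takano2008} while retaining \cite[Proposition 5.5(2)]{kato--takano2008} for uniqueness. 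If you want to keep your canonical-lift route, you must either prove the staged-lift compatibility in detail or switch to the asymptotic-characterization argument with Lagier's theorem supplying the expansion on a cone where the two stages can be combined.
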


\begin{note}
In the bottom right of the diagram in \Cref{prop-twisted-transitive}, we've used that $\left(\chi\vert_{M^\theta}\right)\vert_{(M')^\theta}=\chi\vert_{(M')^\theta}$, i.e, we can restrict the character $\chi$ in stages.	
\end{note}

\begin{rmk}
It appears that, in the proof of \cite[Proposition 5.9]{kato--takano2008} when applying \cite[proposition 5.5(1)]{kato--takano2008}, Kato and Takano use that $S_{M'}^-(\epsilon)$ is contained in $S_M^-(\epsilon)$, which is not true if $M'$ is a proper Levi of $M$.
However, Lagier's result \cite[Th\'eor\`{e}me 2]{lagier2008} may be used, in place of \cite[proposition 5.5(1)]{kato--takano2008}, to prove \cite[Proposition 5.9]{kato--takano2008} and similarly \Cref{prop-twisted-transitive}.
Importantly, \cite[Proposition 5.5(2)]{kato--takano2008} remains true when using \cite[Th\'eor\`{e}me 2]{lagier2008} in place of \cite[Proposition 5.5(1)]{kato--takano2008}.
\end{rmk}

\subsection{Characterizing $(H,\chi,\lambda)$-relative supercuspidality}\label{sec-twisted-rsc-cond}
Here, we use apply the results of \Cref{sec-equivariant-kt08} to generalize the results of \cite[Section 6]{kato--takano2008} to $(H,\chi)$-distinguished representations.
The arguments are exactly the same as in Kato and Takano's work except that we apply the results of \Cref{sec-equivariant-kt08} in place of those in \cite[Section 5]{kato--takano2008} (\textit{cf}.~\Cref{sec-invariant-kt08} \textit{loc cit.}); therefore, we'll only note when the arguments need to be adjusted.
The main result is the following generalization of \cite[Theorem 6.2]{kato--takano2008}.

\begin{thm}\label{thm-twisted-rsc-condition}
	Let $(\pi,V)$ be an admissible $(H,\chi)$-distinguished representation of $G$ and let $\lambda \in \Hom_H(\pi,\chi)$ be nonzero.
	Then $(\pi,V)$ is $(H,\chi,\lambda)$-relatively supercuspidal if and only if $\lambda_{N,\chi} = 0$, for every proper $\theta$-split parabolic subgroup $P=MN$ of $G$.
\end{thm}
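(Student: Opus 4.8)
The plan is to transpose Kato and Takano's proof of \Cref{kt08-rsc-thm} to the $(H,\chi)$-setting, replacing $\lambda_N$ by $\lambda_{N,\chi}$ throughout and invoking \Cref{prop-twisted-rPlambda}, \Cref{prop-twisted-rP-well-defn}, and \Cref{prop-twisted-transitive} in place of their untwisted counterparts from \cite[\S5]{kato--takano2008}; the one systematic new precaution is to pass far enough into an adapted family $\{K_n\}$ that $K_n\cap H\subset\ker\chi$ before quoting any compact-open-subgroup estimate. Before starting I would record two reductions. By the transitivity of $\lambda\mapsto\lambda_{N,\chi}$ (\Cref{prop-twisted-transitive}), if $\lambda_{N_1,\chi}=0$ for a maximal proper $\theta$-split parabolic $P_1=M_1N_1$ then $\lambda_{N,\chi}=(\lambda_{N_1,\chi})_{N\cap M_1,\,\chi\vert_{M_1^\theta}}=0$ for every $\theta$-split $P=MN\subseteq P_1$; hence it is enough to treat the maximal proper $\theta$-split parabolics (finitely many, indexed by $\overline\Delta_0$), for which $S_M$ has rank one modulo $Z_G$ --- this makes the endgame below a one-variable statement. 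Next, by \Cref{KT08-lem-2.5} every $\theta$-split parabolic is $(\Hbf\mathbf M_0)(F)$-conjugate to a $\Delta_0$-standard one, and $\varphi_{\lambda,v}$ transforms in a simple way under such conjugation, so I may assume $P=P_\Theta$ is standard and use the adapted family of \Cref{lem-adapted-co-family}.

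The analytic core is the $(H,\chi)$-analogue of \cite[Proposition 5.5]{kato--takano2008} (legitimate by the discussion after \Cref{defn-twisted-rPlambda}): given $v\in V$, choose $K=K_n$ with $v\in V^K$ and $K\cap H\subset\ker\chi$; then there is $0<\epsilon\le1$, depending on $v$ and $K$, so that for every $s\in S_\Theta^-(\epsilon)$ one has $\pi(s)v\in V^{(M\cap K)(N^{\op}\cap K)}$, and, combining \Cref{prop-twisted-rP-well-defn}(1) with Casselman's canonical-lifting formalism and \Cref{defn-twisted-rPlambda},
\begin{align*}
\varphi_{\lambda,v}(s^{-1})=\ip{\lambda}{\pi(s)v}=\delta_P(s)^{1/2}\,\ip{\lambda_{N,\chi}}{\pi_N(s)[v]}.
\end{align*}
Since $\delta_P(s)^{1/2}\neq0$, the point is that deep in the dominant cone of $S_M$ the relative matrix coefficient $\varphi_{\lambda,v}$ is, up to a harmless scalar, the value of the descended form $\lambda_{N,\chi}$ on the Jacquet module. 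I would derive the displayed identity exactly as Kato and Takano do, using the Iwahori factorization and contraction properties of \Cref{lem-adapted-co-family} to see that $\mathcal P_{N\cap K}(\pi(s)v)$ is the canonical lift of $[\pi(s)v]$ with respect to $K$.

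For the ``if'' direction, assume $\lambda_{N,\chi}=0$ for every proper $\theta$-split $P=MN$, and fix $v$ and $K$ as above. I would invoke the Cartan-type decomposition of $G$ relative to $H$: there is a finite set $Y\subset G$ with $G=\bigcup_{y\in Y}K\,y\,S_0^-H$, where $S_0^-$ is the dominant cone in the maximal $(\theta,F)$-split torus $S_0$. Any $s\in S_0^-$ that is deep with respect to the walls $\Delta_0\setminus\Theta$ lies in $S_\Theta^-(\epsilon)$ for the associated standard $\theta$-split $P_\Theta=M_\Theta N_\Theta$, so the displayed identity (applied to $\pi(y^{-1})v$) and $\lambda_{N_\Theta,\chi}=0$ force $\varphi_{\lambda,v}(y\,s^{-1}\,h)=0$ there. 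Only a part of each $S_0^-$ bounded modulo $S_0^1 Z_G$ contributes, and the left factor $K$ is compact, so $\supp\varphi_{\lambda,v}$ has compact image in $G/Z_GH$; hence $\varphi_{\lambda,v}\in C_{\omega,0}^\infty(G,H,\chi)$ for all $v$, i.e.\ $\pi$ is $(H,\chi,\lambda)$-relatively supercuspidal.

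For the ``only if'' direction I argue by contraposition. Suppose $\lambda_{N,\chi}\neq0$ for some maximal proper $\theta$-split $P=MN$; choose $[v]\in V_N$ with $\ip{\lambda_{N,\chi}}{[v]}\neq0$ and, after enlarging $n$ so that $K_n\cap H\subset\ker\chi$, let $v$ be a canonical lift of $[v]$ with respect to $K=K_n$, so $[v]\in(V_N)^{M\cap K}$ and the displayed identity applies. Since the central torus $S_M$ of $M$ preserves the finite-dimensional space $(V_N)^{M\cap K}$, the function $f(s):=\ip{\lambda_{N,\chi}}{\pi_N(s)[v]}$ is a matrix coefficient of the finite-dimensional representation of $S_M$ on $(V_N)^{M\cap K}$, hence a finite function of $s$ (a finite sum of quasicharacters of $S_M$ times polynomials in valuations). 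As $S_M$ has rank one modulo $Z_G$, a finite function vanishing on the half-line $S_\Theta^-(\epsilon)$ vanishes identically; since $f(1)=\ip{\lambda_{N,\chi}}{[v]}\neq0$ this is impossible, so $f(s)\neq0$ for infinitely many $s\in S_\Theta^-(\epsilon)$ whose images in $G/Z_GH$ leave every compact set (here $P$ proper is used). By the displayed identity $\varphi_{\lambda,v}(s^{-1})\neq0$ at these points, so $\varphi_{\lambda,v}\notin C_{\omega,0}^\infty(G,H,\chi)$ and $\pi$ is not $(H,\chi,\lambda)$-relatively supercuspidal. I expect the main obstacle to be this analytic core together with the Cartan-type decomposition that links it to compact support modulo $Z_GH$; but since \Cref{sec-equivariant-kt08} has already carried the character $\chi$ through all of \cite[\S5]{kato--takano2008}, pushing it through the remaining arguments of \cite{kato--takano2008} amounts to rerunning them almost verbatim.
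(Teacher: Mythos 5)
Your ``only if'' direction is workable: reducing to maximal proper $\theta$-split parabolics via \Cref{prop-twisted-transitive} is legitimate, and your finite-function argument on a rank-one-mod-centre torus is a (slightly more elaborate) variant of Kato--Takano's own step, which simply observes that for one sufficiently deep $s$ the identity kills $\lambda_{N,\chi}$ on all of $(V_N)^{M\cap K}$ because $\pi_N(s)$ is an invertible operator there; you should, however, justify the jump from ``$f$ does not vanish identically on $S_\Theta^-(\epsilon)$'' to ``$f$ is nonzero at points whose images in $G/Z_GH$ leave every compact set'' (apply the same backward-recurrence argument on $S_\Theta^-(\epsilon')$ for every $\epsilon'\le\epsilon$: an exponential polynomial with nonzero characteristic roots vanishing at all sufficiently deep points of a line through $1$ vanishes at $1$).

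The genuine gap is in the ``if'' direction. You invoke a Cartan-type decomposition $G=\bigcup_{y\in Y}K\,y\,S_0^-H$ with the finite set placed between the compact and the \emph{fixed standard} torus $S_0$. The decomposition actually available \cite{delorme--secherre2011} is $G=\style C\,S_0^+\,\style X^{-1}H$ with $\style X$ a finite subset of $(\Hbf\Mbf_0)(F)$ sitting between the torus and $H$. Since $x\in(\Hbf\Mbf_0)(F)$ does not centralize $S_0$ (only $m_x=x^{-1}\theta(x)\in M_0$ does), the factor $x^{-1}$ cannot be slid to the left of $S_0^+$ without replacing $S_0$ by the conjugate torus $xS_0x^{-1}$; equivalently, the points near infinity are dominant in finitely many \emph{different} maximal $(\theta,F)$-split tori, not all in the standard one. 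With the twist in its true position, evaluating $\varphi_{\lambda,v}$ on $\style C S_\Theta^+(\epsilon)x^{-1}H$ produces $\chi(h^{-1})\ip{\lambda\circ\pi(x)}{\pi(s)\pi(k^{-1})v}$, and $\lambda'=\lambda\circ\pi(x)$ is equivariant not for $(H,\chi)$ but for the twisted pair $(G^{\theta_x},\chi\circ\Int x^{-1})$; to use the hypothesis --- vanishing of $\lambda_{U,\chi}$ for the possibly non-standard $\theta$-split parabolic $U=xN_\Theta x^{-1}$ --- one must identify $\lambda'_{N_\Theta,\chi'}$ with the transport of $\lambda_{U,\chi}$. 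That identification is precisely \Cref{lem-twisted-kt-6-6}, and the resulting vanishing statement is \Cref{lem-twisted-kt-6-7}; these are the new ingredients the paper supplies before declaring that the argument of \cite{kato--takano2008} goes through. Your proposal never engages with the twisted involutions $\theta_x$, and the early reduction ``by \Cref{KT08-lem-2.5} I may assume $P=P_\Theta$ is standard, since $\varphi_{\lambda,v}$ transforms in a simple way under conjugation'' is where this is lost: the conjugation is by $(\Hbf\Mbf_0)(F)$, not by $H$ (unless one assumes $(\Hbf\Mbf_0)(F)=HM_0$ as in \Cref{KT08-lem-2.5}(2)), so it changes both the symmetric subgroup and the character and is not an innocuous change of variables. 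As written, your ``if'' argument uses only the standard descended forms $\lambda_{N_\Theta,\chi}$ together with a decomposition that is not the known one (and fails in that form whenever there are several $H$-conjugacy classes of maximal $(\theta,F)$-split tori), so the main technical point of the proof is missing.
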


\begin{rmk}
Of course, if $\chi =1$ and $\pi$ is $H$-distinguished, then \Cref{thm-twisted-rsc-condition} recovers \cite[Theorem 6.2]{kato--takano2008}.
\end{rmk}

Before giving a proof of \Cref{thm-twisted-rsc-condition}, we assemble preliminary results analogous to the results of \cite[Section 6.5]{kato--takano2008}.
Recall that $P_0=M_0N_0$ is the minimal $\Delta_0$-standard $\theta$-split parabolic subgroup of $G$ associated to the data $(S_0,A_0,\Delta_0)$.
If $x\in (\Hbf \Mbf_0)(F)$, define $m_x = x^{-1}\theta(x) \in M_0 = \Mbf_0(F)$.
The twisted involution $\theta_x$ is given by
\begin{align*}
\theta_x(g) &= x^{-1}\theta(xgx^{-1})x = m_x \theta(g) m_x^{-1},
\end{align*}
for all $g\in G$.
The $\theta_x$-fixed points of $G$ are related to the $\theta$-fixed points by the following equality
\begin{align*}
G^{\theta_x} & = x^{-1} G^\theta x = x^{-1} H x.	
\end{align*}

\begin{note}
The maximal $(\theta,F)$-split torus $S_0$ is centralized by $M_0$; therefore, since $x \in (\Hbf\Mbf_0)(F)$, $S_0$ is also a maximal $(\theta_x,F)$-split torus of $G$.
Moreover, any $\Delta_0$-standard $\theta$-split parabolic subgroup of $G$ is $\theta_x$-split.	
The $(\theta,F)$-split and $(\theta_x,F)$-split components for such a parabolic subgroup coincide.
\end{note}

Assume that $P=MN$ is a $\Delta_0$-standard $\theta$-split parabolic subgroup of $G$.
Let $\chi$ be a quasi-character of $H = G^\theta$.
Denote by $\chi'$ the corresponding quasi-character ${}^{x^{-1}}\chi = \chi \circ \Int x^{-1}$ of $G^{\theta_x}$.
Let $\pi$ be an admissible $(G^{\theta_x}, \chi')$-distinguished representation of $G$.
We have a mapping
\begin{align*}
	\Hom_{G^{\theta_x}}(\pi, \chi') & \rightarrow \Hom_{M^{\theta_x}}(\pi_N, \chi' \vert_{M^{\theta_x}}),\\
	\lambda' & \mapsto \lambda'_{N,\chi'}.
\end{align*}

For $x \in (\Hbf\Mbf_0)(F)$, as above, the parabolic subgroup $Q = xPx^{-1}$ is $\theta$-split but possibly nonstandard.
Let $U$ be the unipotent radical of $Q$ and let $L$ be the $\theta$-stable Levi factor $Q\cap \theta(Q)$. 
The $(\theta,F)$-split component of $L$ is $S_L = x S_M x^{-1}$.
As above, for any $(H,\chi)$-distinguished admissible representation $\pi$ of $G$ we have a mapping
\begin{align*}
	\Hom_{H}(\pi,\chi) & \rightarrow \Hom_{L^\theta}(\pi_U, \chi\vert_{L^\theta}). \\
	\lambda & \mapsto \lambda_{U,\chi}
\end{align*}

Observe that $L = xMx^{-1}$ and $L^\theta = x M^{\theta_x} x^{-1}$.
Moreover, the map sending $\lambda \in \Hom_{H}(\pi,\chi)$ to $\lambda ' = \lambda \circ \pi(x) \in \Hom_{G^{\theta_x}}(\pi, \chi')$ is a linear isomorphism.
The following is the direct analogue of \cite[Lemma 6.6]{kato--takano2008}.

\begin{lem}\label{lem-twisted-kt-6-6}
Let $x \in (\Hbf \Mbf_0)(F)$.
Let $P=MN$, and $Q=LU = xPx^{-1}$ be $\theta$-split parabolic subgroups of $G$.	 
Let $\chi$ be a quasi-character of $H$ and let $\chi'=\chi\circ \Int x^{-1}$.
Let $(\pi,V)$ be an $(H,\chi)$-distinguished representation of $G$.
Given $\lambda \in \Hom_{H}(\pi,\chi)$, set $\lambda ' = \lambda \circ \pi(x) \in \Hom_{G^{\theta_x}}(\pi, \chi')$.
The relation 
\begin{align}\label{eq-twisted-kt-6-6}
	\ip{\lambda'_{N,\chi'}	}{[v]_N} & = \ip{\lambda_{U,\chi}}{[\pi(x)v]_U}
\end{align}
holds for every $\lambda \in \Hom_H(\pi,\chi)$ and $v\in V$.\footnote{We use $[v]_N$ to denote the image of $v$ in $V_N$ and $[v]_U$ to denote the image of $v$ in $V_U$.}
\end{lem}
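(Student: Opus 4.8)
The plan is to follow the proof of \cite[Lemma 6.6]{kato--takano2008}, transporting Casselman's canonical lifting for the data $(\theta_x,P,\chi')$ to that for $(\theta,Q,\chi)$ by means of the operator $\pi(x)$, while keeping track of the quasi-characters exactly as in \Cref{sec-equivariant-kt08}. First I would record the elementary compatibilities. Since conjugation by $x$ carries the adjoint action of $M$ on $\style n$ to that of $L = xMx^{-1}$ on $\style u$, we have $\delta_Q(xmx^{-1}) = \delta_P(m)$ for $m \in M$; combined with $Q = xPx^{-1}$ this gives $\pi(x)V(N) = V(U)$, so $\pi(x)$ descends to a linear isomorphism $V_N \xrightarrow{\sim} V_U$, $[v]_N \mapsto [\pi(x)v]_U$, intertwining $\pi_N$ as a representation of $M$ with $\pi_U$ as a representation of $L$ via $m \mapsto xmx^{-1}$ (respecting the normalizations, because $\delta_Q\circ\Int x = \delta_P$). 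Both sides of \eqref{eq-twisted-kt-6-6} depend on $v$ only through $[v]_N$, so it suffices to prove the identity when $v$ is a canonical lift.

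Next I would fix the adapted families. Applying \Cref{lem-adapted-co-family} to the involution $\theta_x$ — legitimate by the Note preceding \Cref{thm-twisted-rsc-condition}, since $S_0$ is a maximal $(\theta_x,F)$-split torus and every $\Delta_0$-standard $\theta$-split parabolic is $\theta_x$-split — I obtain an adapted family $\{K_n\}_{n\geq 0}$ of $\theta_x$-stable compact open subgroups adapted to $(S_0,A_0,\Delta_0)$, taking $n$ large enough that $K_n \cap G^{\theta_x} \subset \ker\chi'$. Then $K_n' := xK_nx^{-1}$ is a family of $\theta$-stable compact open subgroups having Iwahori factorization with respect to $Q = xP_\Theta x^{-1}$ and absorbing $S_L^-(\epsilon) = xS_\Theta^-(\epsilon)x^{-1}$ in the sense of \Cref{lem-adapted-co-family}(3) — these being the $\Int x$-images of the corresponding properties of $\{K_n\}$ — and $K_n' \cap H = x(K_n \cap G^{\theta_x})x^{-1} \subset \ker\chi$, since $\chi'$ is the transport of $\chi$ along conjugation by $x$. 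Thus $\{K_n'\}$ is an adapted family (relative to data obtained from $(S_0,A_0,\Delta_0)$ by conjugation by $x$, with respect to which $Q$ is standard) that may be used to compute $\lambda_{U,\chi}$; by \Cref{prop-twisted-rP-well-defn} the resulting value does not depend on this choice.

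The computation then runs as follows. Fix $v \in V$ a canonical lift of $[v]_N$ with respect to some $K = K_n$, so $[v]_N \in (V_N)^{M\cap K}$ and $v \in \mathcal{P}_K(\pi(s)V^K)$ for a suitable $s \in S_M^-(\epsilon)$. By \Cref{defn-twisted-rPlambda} applied to $(\theta_x,\chi')$, $\ip{\lambda'_{N,\chi'}}{[v]_N} = \ip{\lambda'}{v} = \ip{\lambda}{\pi(x)v}$. On the other hand $\pi(x)V^K = V^{K'}$, $\pi(x)\mathcal{P}_K = \mathcal{P}_{K'}\pi(x)$, and $\pi(x)\pi(s) = \pi(xsx^{-1})\pi(x)$ with $xsx^{-1} \in S_L^-(\epsilon)$, while $\pi(x)$ is compatible with the projections onto $V_N$ and $V_U$; hence $\pi(x)v \in \mathcal{P}_{K'}(\pi(xsx^{-1})V^{K'})$ is a canonical lift of $[\pi(x)v]_U$ with respect to $\{K_n'\}$. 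Therefore \Cref{defn-twisted-rPlambda} applied to $(\theta,\chi)$ gives $\ip{\lambda_{U,\chi}}{[\pi(x)v]_U} = \ip{\lambda}{\pi(x)v}$, and \eqref{eq-twisted-kt-6-6} follows.

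I expect the main obstacle to be the bookkeeping behind the claim that $\pi(x)$ sends the canonical lift for $(\theta_x,P)$ to the canonical lift for $(\theta,Q)$: one must check that $\Int x$ transports all the auxiliary data of Casselman's construction — the compact open subgroup $N_0 \subset N$ controlling $V^K \cap V(N)$, the constant $\epsilon$, the cone $S_M^-(\epsilon)$, and Casselman's isomorphism $\mathcal{P}_K(\pi(s)V^K) \cong (V_N)^{M\cap K}$ — to the corresponding data for $Q$, and that the normalized Jacquet functors are matched by $\delta_Q\circ\Int x = \delta_P$. All of this is formal once set up; the only genuinely delicate point is to confirm that $\{xK_nx^{-1}\}$ qualifies as an adapted family in the sense of \Cref{lem-adapted-co-family} relative to data with respect to which $Q$ is standard, so that \Cref{defn-twisted-rPlambda} and \Cref{prop-twisted-rP-well-defn} legitimately apply to $\lambda_{U,\chi}$.
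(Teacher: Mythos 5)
Your argument is correct in substance, but it takes a genuinely different route from the paper. You prove the identity by transport of structure: conjugation by $x$ intertwines $\theta_x$ with $\theta$ (indeed $\theta\circ\Int x=\Int x\circ\theta_x$), so it carries an adapted family and Casselman's canonical lifting for $(P,\theta_x,\chi')$ to an adapted family and canonical lifting for $(Q,\theta,\chi)$, and the identity then falls out of \Cref{defn-twisted-rPlambda} applied on both sides. The paper instead never touches canonical lifts here: it sets $\bar\lambda=\lambda_{U,\chi}\circ\overline{\pi(x)}$, uses \cite[Proposition 5.5(1)]{kato--takano2008} for $Q=LU$ to show that $\ip{\lambda'}{\pi(s)v}=\delta_P^{1/2}(s)\ip{\bar\lambda}{\pi_N(s)[v]_N}$ for $s\in S_M^-(\epsilon)$, and then invokes the uniqueness statement \cite[Proposition 5.5(2)]{kato--takano2008} to conclude $\bar\lambda=\lambda'_{N,\chi'}$. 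The characterization route buys exactly what you identify as the delicate part of your proof: it makes the answer manifestly independent of all auxiliary choices (adapted family, canonical lift, and the underlying data), so no bookkeeping about how $\Int x$ moves $N_0$, $\epsilon$, $S_M^-(\epsilon)$, or the families $\{K_n\}$ is needed. Your route is more hands-on and stays at the level of the definition, at the cost of carrying out that bookkeeping.

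One point in your setup needs repair, though it is not fatal. You cannot apply \Cref{lem-adapted-co-family} to $\theta_x$ with the data $(S_0,A_0,\Delta_0)$ as they stand: $\theta_x(A_0)=m_xA_0m_x^{-1}$ and $m_x\in M_0$ need not normalize $A_0$, so $A_0$ need not be $\theta_x$-stable and $\Delta_0$ need not be a $\theta_x$-base. What is true (and is all that the construction of $\lambda'_{N,\chi'}$ presupposes) is that $S_0$ is a maximal $(\theta_x,F)$-split torus and $P$ is $\theta_x$-split; one must then choose a $\theta_x$-stable maximal $F$-split torus $A_0'\supset S_0$ (inside $M_0$) and a $\theta_x$-base with respect to which $P$ is standard, and build the $\theta_x$-adapted family from that data. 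Your conjugation argument goes through verbatim with this corrected family, and the independence of $\lambda_{U,\chi}$ from the resulting (conjugated, generally nonstandard relative to the original data) choices is most cleanly justified by the same uniqueness characterization \cite[Proposition 5.5(2)]{kato--takano2008} that the paper uses — so even on your route that proposition is doing quiet work in the background.
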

\begin{proof}
The map $\pi(x): V \rightarrow V$ maps $V(N)$ isomorphically onto $V(U)$; therefore, $\pi(x)$ induces a linear isomorphism $\overline{\pi(x)}: V_N \rightarrow V_U$ of Jacquet modules.
Moreover, $\overline{\pi(x)}[v]_N = [\pi(x) v]_U$, for every $v\in V$.
To prove the equality in \eqref{eq-twisted-kt-6-6}, we will apply \cite[Proposition 5.5(2)]{kato--takano2008}.
Set $\bar \lambda = \lambda_{U,\chi} \circ \overline{\pi(x)}$.
For $s\in S_M$, we have that
\begin{align*}
\delta_P^{1/2}(s)\ip{\bar\lambda}{\pi_N(s)[v]_N} 
& = \ip{\lambda_{U,\chi}}{\overline{\pi(x)} (\delta_P^{1/2}(s) \pi_N(s)[v]_N)}\\
& = \ip{\lambda_{U,\chi}}{\overline{\pi(x)}[\pi(s)v]_N}\\
& = \ip{\lambda_{U,\chi}}{[\pi(x)\pi(s)v]_U} \\
& = \ip{\lambda_{U,\chi}}{\delta_Q^{1/2}(xsx^{-1})\pi_U(xsx^{-1})[\pi(x)v]_U}.
\end{align*}
By \cite[Proposition 5.5(1)]{kato--takano2008}, applied with respect to $Q=LU$, there exists $0 < \epsilon \leq 1$ with 
\begin{align*}
\ip{\lambda_{U,\chi}}{\delta_Q^{1/2}(xsx^{-1})\pi_U(xsx^{-1})[\pi(x)v]_U}
& = \ip{\lambda}{\pi(xsx^{-1})\pi(x)v} \\
& = \ip{\lambda}{\pi(x)\pi(s)v} \\
& = \ip{\lambda'}{\pi(s)v},	
\end{align*}
for every $s \in S_M^-(\epsilon)$.\footnote{Recall that $S_L = x S_M x^{-1}$.}
By \cite[Proposition 5.5(2)]{kato--takano2008}, $\bar \lambda = \lambda_{U,\chi} \circ \overline {\pi(x)}$ is equal to $\lambda'_{N,\chi'}$, 
which completes the proof.
\end{proof}

\begin{rmk}\
\begin{enumerate}
	\item In the second and fourth lines of the first calculation in the proof of \Cref{lem-twisted-kt-6-6}, we used that $\pi_N(s)[v]_N = \delta_P^{-1/2}(s)[\pi(s)v]_N$, i.e., we work with normalized Jacquet modules.
	\item Explicitly, $\lambda'_{N,\chi'}$ is equal to $(\lambda \circ \pi(x))_{N,{}^{x^{-1}}\chi}$.
\end{enumerate}
\end{rmk}

The next result is the analogue of \cite[Lemma 6.7]{kato--takano2008}.

\begin{lem}\label{lem-twisted-kt-6-7}
 Let $\lambda \in \Hom_H(\pi,\chi)$ be a nonzero $(H,\chi)$-equivariant linear functional on an admissible $(H,\chi)$-distinguished representation $(\pi,V)$ of $G$. 
Let $P=MN$ be the $\theta$-split parabolic subgroup $xP_\Theta x^{-1}$ of $G$, where $\Theta$ is a $\theta$-split  subset of $\Delta_0$, $P_\Theta=M_\Theta N_\Theta$, and $x \in  (\Hbf \Mbf_0)(F)$.  
Let $\style C$ be a compact subset of $G$.
 If $\lambda_{N,\chi}= 0$, then for all $v\in V$ there exists a positive real number $\epsilon \leq 1$, depending on $\Theta$ and $x$, such that the relative matrix coefficient $\varphi_{\lambda, v}$ vanishes identically on $\style C S_\Theta^+(\epsilon) x^{-1} H$.
\end{lem}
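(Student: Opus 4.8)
The plan is to reduce the asserted vanishing of $\varphi_{\lambda,v}$ on $\style C S_\Theta^+(\epsilon)x^{-1}H$ to a single application of the $(H,\chi)$-equivariant analogue of \cite[Proposition 5.5(1)]{kato--takano2008} (developed in \Cref{sec-equivariant-kt08}), fed with the hypothesis $\lambda_{N,\chi}=0$. First I would unwind the definitions: for $c\in\style C$, $s\in S_\Theta$ and $h\in H$, the $(H,\chi)$-equivariance of $\lambda$ gives $\varphi_{\lambda,v}(csx^{-1}h)=\chi(h^{-1})\langle\lambda,\pi(xs^{-1}c^{-1})v\rangle$, so the variable $h$ is irrelevant to vanishing and the problem lives in the single element $xs^{-1}c^{-1}$. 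Writing $xs^{-1}c^{-1}=(xs^{-1}x^{-1})(xc^{-1})$ and recalling that $P=xP_\Theta x^{-1}$ with $x\in(\Hbf\Mbf_0)(F)$ forces $M=xM_\Theta x^{-1}$, hence $S_M=xS_\Theta x^{-1}$ and $S_M^-(\epsilon)=xS_\Theta^-(\epsilon)x^{-1}$ for all $0<\epsilon\le 1$, the element $t:=xs^{-1}x^{-1}$ lies in $S_M^-(\epsilon)$ exactly when $s\in S_\Theta^+(\epsilon)$ (equivalently $s^{-1}\in S_\Theta^-(\epsilon)$). Thus the goal reduces to producing $\epsilon\in(0,1]$, depending only on $v$, $\style C$, $\Theta$, $x$, such that $\langle\lambda,\pi(t)\,\pi(xc^{-1})v\rangle=0$ for all $c\in\style C$ and all $t\in S_M^-(\epsilon)$.

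Next I would use smoothness to treat the compact set $\style C$ uniformly. The set $\{xc^{-1}:c\in\style C\}$ is compact, and since $(\pi,V)$ is smooth the orbit map $g\mapsto\pi(g)v$ is locally constant; hence $\{\pi(xc^{-1})v:c\in\style C\}$ is a \emph{finite} subset $\{w_1,\dots,w_k\}$ of $V$. For each $w_j$, the $(H,\chi)$-analogue of \cite[Proposition 5.5(1)]{kato--takano2008} applied to the (possibly nonstandard) $\theta$-split parabolic $P=MN$ — exactly as used in the proof of \Cref{lem-twisted-kt-6-6} — supplies $\epsilon_j\in(0,1]$ with $\langle\lambda,\pi(t)w_j\rangle=\langle\lambda_{N,\chi},[\pi(t)w_j]_N\rangle$ for all $t\in S_M^-(\epsilon_j)$; the hypothesis $\lambda_{N,\chi}=0$ makes the right-hand side vanish. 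Setting $\epsilon=\min\{\epsilon_1,\dots,\epsilon_k\}$ and using $S_M^-(\epsilon)\subseteq S_M^-(\epsilon_j)$ (immediate from \eqref{eq-split-dominant-part} transported to $S_M$) gives $\langle\lambda,\pi(t)w_j\rangle=0$ for all $j$ and all $t\in S_M^-(\epsilon)$, which is precisely the reduced goal; tracing back, $\varphi_{\lambda,v}$ vanishes identically on $\style C S_\Theta^+(\epsilon)x^{-1}H$, and by construction $\epsilon$ depends only on the permitted data.

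The only genuinely non-formal point — the main obstacle — is obtaining an $\epsilon$ that works \emph{simultaneously} for every $c\in\style C$: Proposition 5.5(1) produces an $\epsilon$ only for one vector at a time. The finiteness of $\{\pi(xc^{-1})v:c\in\style C\}$, a consequence of compactness of $\style C$ and smoothness of $\pi$, is what resolves this, allowing us to take a minimum over finitely many $\epsilon_j$. All remaining ingredients — the bookkeeping relating $S_\Theta^{\pm}(\epsilon)$ to $S_M^{\pm}(\epsilon)$ by conjugation by $x$, and the normalization in the Jacquet module — are routine and already present in the proof of \Cref{lem-twisted-kt-6-6}.
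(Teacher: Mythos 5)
Your argument is correct, and it reaches the same asymptotic input as the paper -- the $(H,\chi)$-analogue of \cite[Proposition 5.5(1)]{kato--takano2008} combined with the hypothesis $\lambda_{N,\chi}=0$ -- but by a more direct route. The paper keeps the torus variable in the \emph{standard} torus $S_\Theta$, rewrites $\ip{\lambda}{\pi(x)\pi(s)\pi(k^{-1})v}$ as $\ip{\lambda'}{\pi(s)\pi(k^{-1})v}$ with $\lambda'=\lambda\circ\pi(x)$ equivariant for the twisted involution $\theta_x$, applies Proposition 5.5(1) for $\lambda'$ and the standard parabolic $P_\Theta$, and then needs \Cref{lem-twisted-kt-6-6} to convert $\lambda'_{N_\Theta,\chi'}$ back into $\lambda_{N,\chi}$ before invoking the vanishing hypothesis. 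You instead conjugate the torus element into $S_M^-(\epsilon)=xS_\Theta^-(\epsilon)x^{-1}$ and apply the Proposition 5.5(1) analogue directly to $\lambda$ and the (possibly nonstandard) $\theta$-split parabolic $P=xP_\Theta x^{-1}$, which lets you use $\lambda_{N,\chi}=0$ immediately and bypass \Cref{lem-twisted-kt-6-6} entirely. This is legitimate within the paper's framework: $\lambda_{N,\chi}$ is defined for arbitrary $\theta$-split parabolics, and the paper itself applies Proposition 5.5(1) to the nonstandard conjugate $Q=xPx^{-1}$ with the original $\lambda$ inside the proof of \Cref{lem-twisted-kt-6-6}, so there is no circularity -- your proof simply inlines that step. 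The uniformity over the compact set $\style C$ is handled in the paper by noting that $\pi(k^{-1})v$, $k\in\style C$, stays in a finite-dimensional subspace (and using linearity of the identity in the vector), whereas you use finiteness of $\{\pi(xc^{-1})v\}$ coming from smoothness; these are interchangeable devices. In short: correct, with the trade-off that your version is shorter but leans explicitly on the validity of the descent identity for nonstandard $\theta$-split parabolics, while the paper's detour through $\lambda'$, $\theta_x$, and $P_\Theta$ is what makes the comparison of the two constructions (\Cref{lem-twisted-kt-6-6}) a reusable statement in its own right.
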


\begin{proof}
	Let $v\in V$ be nonzero.
	Let $k\in \style C$, $s\in S_\Theta^-(\epsilon)$, and $h\in H$.
	By definition,  $s^{-1} \in S_\Theta^+(\epsilon)$.
	Observe that 
	\begin{align*}
		\varphi_{\lambda,v}(ks^{-1}x^{-1}h) 	
		& = \ip{\lambda}{\pi(h^{-1}xsk^{-1}v)} \\
		& = \chi(h^{-1}) \ip{\lambda}{\pi(x)\pi(s)\pi(k^{-1}) v}\\
		& = \chi(h^{-1})\ip{\lambda'}{\pi(s)\pi(k^{-1})v},
	\end{align*}
where $\lambda' = \lambda \circ \pi(x) \in \Hom_{G^{\theta_x}}(\pi,\chi')$, and $\chi' = {}^{x^{-1}}\chi$.
Recall that, since $\style C$ is compact, $\pi(k^{-1})v$ remains in a finite dimensional subspace of $V$ for every $k \in \style C$.
By \cite[Proposition 5.5(1)]{kato--takano2008} for $\lambda'$ and $P_\Theta=M_\Theta N_\Theta$, there exists $0 < \epsilon \leq 1$ such that  
	\begin{align*}
		\chi(h^{-1})\ip{\lambda'}{\pi(s)\pi(k^{-1})v}
		& = \chi(h^{-1}) \delta_{P_\Theta}^{1/2}(s) \ip{\lambda'_{N_\Theta, \chi'}}{\pi_{N_\Theta}(s)[\pi(k^{-1})v]_{N_\Theta}} \\
		& = \chi(h^{-1})\ip{\lambda'_{N_\Theta, \chi'}}{[\pi(s)\pi(k^{-1})v]_{N_\Theta}},
\end{align*}
for every $s \in S_\Theta^-(\epsilon)$ and $k \in \style C$.
By \Cref{lem-twisted-kt-6-6}, we have
\begin{align*}
\chi(h^{-1})\ip{\lambda'_{N_\Theta,\chi'}}{[\pi(s)\pi(k^{-1})v]_{N_\Theta}} 
		& = \chi(h^{-1})\ip{\lambda_{N,\chi}}{[\pi(x)\pi(s)\pi(k^{-1})]_N}\\
		& = 0,
\end{align*}
where the last equality holds by assumption.
In particular, there is $0< \epsilon \leq 1$ such that $\varphi_{\lambda,v}(ks^{-1}x^{-1}h)=0$ for every $k\in \style C$, $s\in S_\Theta^-(\epsilon)$, and $h \in H$.
\end{proof}

\begin{proof}[Proof of \Cref{thm-twisted-rsc-condition}]
The proof of the ``only if" part follows exactly as that of \cite[Theorem 6.2]{kato--takano2008} (see \cite[Section 6.3]{kato--takano2008}).	Moreover, with \Cref{lem-twisted-kt-6-6,lem-twisted-kt-6-7} in place of \cite[Lemma 6.6]{kato--takano2008} and \cite[Lemma 6.7]{kato--takano2008}, the proof of the ``if" part of \Cref{thm-twisted-rsc-condition} goes through exactly as that of \cite[Theorem 6.2]{kato--takano2008} (see \cite[Section 6.8]{kato--takano2008}).
\end{proof}

\subsection{An $(H,\chi)$-relative subrepresentation theorem}\label{sec-twisted-subrep}
We now give a generalization of the relative Jacquet Subrepresentation Theorem \cite[Theorem 7.1]{kato--takano2008}.  To do so, we first need the $(H,\chi)$-analogue of \cite[Proposition 1.11]{kato--takano2008}.  With \Cref{thm-twisted-rsc-condition} in place, the proofs of both results follow by the same arguments as in the work of Kato and Takano. For completeness, we include the proofs here.

\begin{prop}\label{prop-twisted-kt08-1-11}
Let $(\pi,V)$ be a finitely generated $(H,\chi)$-distinguished representation of $G$ and let $\lambda \in \Hom_H(\pi,\chi)$ be nonzero.
If $(\pi,V)$ is $(H,\chi,\lambda)$-relatively supercuspidal, then $(\pi,V)$ has a nontrivial $(H,\chi)$-distinguished irreducible quotient.	
\end{prop}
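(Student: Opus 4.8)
The plan is to adapt Kato and Takano's proof of \cite[Proposition 1.11]{kato--takano2008}, carrying the character $\chi$ along just as in \Cref{sec-equivariant-kt08,sec-twisted-rsc-cond}.

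First I would reduce to a \emph{faithful} linear form. Set $V_\lambda = \{v \in V : \ip{\lambda}{\pi(g)v} = 0 \text{ for all } g \in G\}$, a $G$-subrepresentation of $V$; it is proper since $\lambda \neq 0$ gives $\ip{\lambda}{v} = \varphi_{\lambda,v}(e) \neq 0$ for some $v$. Let $\bar\pi$ be the quotient on $\bar V = V/V_\lambda$ and $\bar\lambda$ the nonzero form it induces (well defined as $\lambda$ kills $V_\lambda$). Then $\bar\pi$ is finitely generated, is $(H,\chi)$-distinguished via $\bar\lambda$, and is $(H,\chi,\bar\lambda)$-relatively supercuspidal, its $\bar\lambda$-relative matrix coefficients being exactly the $\varphi_{\lambda,v}$; moreover $\bar v \mapsto \varphi_{\bar\lambda,\bar v}$ is now injective, realising $\bar\pi$ as a $G$-submodule of $C_{\omega,0}^\infty(G,H,\chi)$ under left translation. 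The point of the reduction is that $\bar\lambda$ restricts to a \emph{nonzero} form on every nonzero $G$-submodule $W$ of $\bar V$ (else the preimage of $W$ in $V$ would be a $G$-submodule strictly larger than $V_\lambda$ on which all translates of $\lambda$ vanish); hence every such $W$ is $(H,\chi)$-distinguished via $\bar\lambda|_W$ and, being a subrepresentation, is $(H,\chi,\bar\lambda|_W)$-relatively supercuspidal. It now suffices to produce a nonzero $(H,\chi)$-distinguished irreducible quotient of $\bar\pi$, which pulls back along $V \twoheadrightarrow \bar V$ to one of $\pi$.

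Since $\pi$, hence $\bar\pi$, is finitely generated and admissible it has finite length; and a finitely generated smooth representation has a maximal proper subrepresentation (the union of an increasing chain of proper subrepresentations of a finitely generated module is proper), so $\bar\pi$ has at least one irreducible quotient $\sigma = \bar\pi/W$. The claim to prove is that $\sigma$, or some other irreducible quotient of $\bar\pi$, is $(H,\chi)$-distinguished. This is the crux, and the one place where relative supercuspidality enters essentially; it is the analogue of the technical core of \cite[\S\S6--7]{kato--takano2008}. The mechanism is that relative supercuspidality of $\bar\pi$ forbids a non-$(H,\chi)$-distinguished irreducible quotient: if $\sigma$ were not distinguished one would pick $v \in \bar V$ whose image in $\sigma$ is nonzero and with $\ip{\bar\lambda}{v} \neq 0$ (possible as $\ker\bar\lambda$ and $W$ are proper subspaces, $W \not\subseteq \ker\bar\lambda$, and $\C$ is infinite), and then --- using the vanishing $\lambda_{N,\chi} = 0$ for every proper $\theta$-split $P = MN$ (\Cref{thm-twisted-rsc-condition}) together with the canonical-lift estimates behind \Cref{lem-twisted-kt-6-7} --- one would analyse $\varphi_{\bar\lambda,v}$ along the cones $\style C S_\Theta^+(\epsilon)x^{-1}H$ and find that the non-distinguished direction keeps $\varphi_{\bar\lambda,v}$ from being compactly supported modulo $Z_G H$, contradicting relative supercuspidality. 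An alternative route to the same conclusion is to exhibit a relatively supercuspidal distinguished irreducible subrepresentation of $\bar\pi$ (one appears inside the socle, where $\bar\lambda$ is nonzero) and show it splits off as a direct summand, hence is also a quotient, via semisimplicity of the span of relatively supercuspidal matrix coefficients. Either way, the dependence on $\chi$ is harmless --- one only has to go far enough into an adapted family $\{K_n\}$ so that $K_n \cap H \subseteq \ker\chi$ --- so the genuine difficulty I expect is exactly this last step: controlling the cosocle of $\bar\pi$ through the compact-support condition, just as in Kato and Takano's treatment of the untwisted case.
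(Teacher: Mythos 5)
Your reduction to a faithful form and the realisation of $\bar\pi$ inside $C^\infty_{\omega,0}(G,H,\chi)$ are fine (modulo the unjustified word ``admissible'': the hypothesis is only that $\pi$ is finitely generated, and finite length must instead be extracted from the fact that the image of the matrix-coefficient map is a finitely generated subrepresentation of $C_0^\infty(G,H,\chi)$, via \cite[Theorem 6.3.10]{Casselman-book}). The genuine gap is at what you yourself call the crux: you never actually prove that some irreducible quotient is $(H,\chi)$-distinguished. Your main proposed mechanism --- deriving a contradiction from the existence of a \emph{non}-distinguished irreducible quotient $\sigma$ by analysing $\varphi_{\bar\lambda,v}$ along the cones $\style C S_\Theta^+(\epsilon)x^{-1}H$ --- is not a viable route: the proposition does not claim every irreducible quotient is distinguished, and relative supercuspidality (the hypothesis, already encoded by $\lambda_{N,\chi}=0$ for all proper $\theta$-split $P$) gives no asymptotic non-vanishing to play off against; there is nothing forcing $\varphi_{\bar\lambda,v}$ to fail compact support just because one quotient fails to be distinguished.

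What actually closes the argument, and what your ``alternative route'' gestures at without justification, is a unitarizability/semisimplicity step that requires two ingredients you omit. First, one passes (via \cite[Lemma 1.7]{kato--takano2008}) to a quotient $(\rho,W)$ of $V/\ker T$ that is an $\omega$-representation for some quasi-character $\omega$ of $G$ and still embeds in $C_0^\infty(G,H,\chi)$. Second, one twists by a positive-valued quasi-character $\xi$ of $G$ with $\xi\vert_H=1$ and $\xi\vert_{Z_G}=|\omega(\cdot)|^{-1}$ (\cite[Lemma 1.10(2)]{kato--takano2008}); this preserves $(H,\chi)$-distinction and makes the central character unitary, so that $\xi\otimes\rho$ sits inside $C_{\omega_u,0}^\infty(G,H,\chi)$ with $\omega_u$ unitary. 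Only then do the compactly supported (mod $Z_GH$) functions carry a $G$-invariant inner product, making $\xi\otimes\rho$ unitarizable, hence a finite direct sum of irreducibles; the same decomposition holds for $\rho$, every summand is simultaneously a subrepresentation and a quotient, and at least one summand must be $(H,\chi)$-distinguished since $\lambda$ (equivalently evaluation at $e$ composed with the $\chi$-equivariance) is nonzero on $W$. Without the central-character quotient and the $\xi$-twist, ``semisimplicity of the span of relatively supercuspidal matrix coefficients'' has no proof, so as written your argument does not establish the proposition.
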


\begin{proof}
	Let $T: V \rightarrow C^\infty(G,H,\chi)$ be the intertwining operator that sends $v\in V$ to the $\lambda$-relative matrix coefficient $\varphi_{\lambda,v}$.
	By assumption, $T$ has image contained in $C_0^\infty(G,H,\chi)$.  The quotient $V/ \ker T \cong \ran T \subset C_0^\infty(G,H,\chi)$ is finitely generated and thus finite length \cite[Theorem 6.3.10]{Casselman-book}.
	By \cite[Lemma 1.7]{kato--takano2008}, there exists a quotient $(\rho,W)$ of $V / \ker T$ which is an $\omega$-representation for some quasi-character $\omega$ of $G$; moreover, $(\rho,W)$ is equivalent to a subrepresentation of $C_0^\infty(G,H,\chi)$.
	Note that $(\rho,W)$ is also a quotient of $V$.
	By \cite[Lemma 1.10(2)]{kato--takano2008}, there exists a positive valued quasi-character $\xi: G \rightarrow \R_{0>}$ such that $\xi\vert_H = 1$ and $\xi\vert_{Z_G} = |\omega(\cdot)|^{-1}$.
	The representation $\xi\otimes \rho$ is $(H,\chi)$-distinguished.
	We may regard $\xi\otimes\rho$ as a subrepresentation of $C_{0,\omega_u}^\infty(G,H,\chi)$, where $\omega_u = \omega \otimes |\omega(\cdot)|^{-1}$ is a unitary character.
	Thus $(\xi\otimes\rho,W)$ is unitarizable and decomposes into a direct sum of finitely many irreducible subrepresentations \cite[Proposition 2.1.14]{Casselman-book}.  The decomposition for the action of $\xi\otimes \rho$ on $W$ also holds for the action of $\rho$ on $W$.  At least one of the irreducible direct summands of $(\rho,W)$ must be $(H,\chi)$-distinguished and this is the desired quotient of $(\pi,V)$.
	\end{proof}

\begin{thm}\label{thm-twisted-kt08-7-1}
Let $(\pi,V)$ be an irreducible admissible $(H,\chi)$-distinguished representation of $G$.  There exists a $\theta$-split parabolic subgroup $P=MN$ of $G$ and an irreducible $(M^\theta, \chi\vert_{M^\theta})$-relatively supercuspidal representation $(\rho,W)$ of $M$ such that $\pi$ is equivalent to a subrepresentation of $\iota_P^G\rho$.
\end{thm}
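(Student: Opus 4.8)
The plan is to argue by induction on the semisimple $F$-rank of $G$ (that is, the $F$-rank of the derived group $\G_{\mathrm{der}}$), following the proof of \cite[Theorem 7.1]{kato--takano2008}. If $\pi$ is $(H,\chi)$-relatively supercuspidal, we take $P=G$ (a $\theta$-split parabolic subgroup with $M=G$ and $N=\{1\}$) and $\rho=\pi$, and there is nothing to prove; this also settles the base of the induction, since when $G$ has no proper parabolic subgroups \Cref{thm-twisted-rsc-condition} makes $\pi$ vacuously $(H,\chi,\mu)$-relatively supercuspidal for every $\mu\in\Hom_H(\pi,\chi)$. So suppose $\pi$ is not $(H,\chi)$-relatively supercuspidal.

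By \Cref{thm-twisted-rsc-condition} there is a nonzero $\lambda\in\Hom_H(\pi,\chi)$ with $\lambda_{N',\chi}\neq 0$ for some \emph{proper} $\theta$-split parabolic subgroup $P'=M'N'$ of $G$. Fix such a $\lambda$ and choose a $\theta$-split parabolic subgroup $P=MN$ of $G$ of minimal dimension with $\lambda_{N,\chi}\neq 0$; then $P$ is proper. The assignment $R\mapsto RN$ is a bijection from the $\theta$-split parabolic subgroups of $M$ (with respect to $\theta\vert_M$) onto the $\theta$-split parabolic subgroups of $G$ contained in $P$, sending proper subgroups to proper subgroups. Hence, by minimality of $\dim P$ together with the transitivity statement of \Cref{prop-twisted-transitive}, $\lambda_{UN,\chi}=(\lambda_{N,\chi})_{U,\chi\vert_{M^\theta}}=0$ for every proper $\theta\vert_M$-split parabolic subgroup $R=LU$ of $M$. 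Applying \Cref{thm-twisted-rsc-condition} to the datum $(M,\theta\vert_M,M^\theta,\chi\vert_{M^\theta})$ and the functional $\lambda_{N,\chi}$, we conclude that $(\pi_N,V_N)$ is $(M^\theta,\chi\vert_{M^\theta},\lambda_{N,\chi})$-relatively supercuspidal.

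Since $\pi$ is irreducible and admissible it is finitely generated, so $\pi_N$ is a finitely generated representation of $M$; by \Cref{prop-twisted-kt08-1-11} applied to $M$ it has a nonzero irreducible $(M^\theta,\chi\vert_{M^\theta})$-distinguished quotient $(\rho_0,W_0)$, which is automatically admissible. Because $\rho_0$ is a quotient of $\pi_N$, Frobenius reciprocity gives $\Hom_G(\pi,\iota_P^G\rho_0)\cong\Hom_M(\pi_N,\rho_0)\neq 0$, and as $\pi$ is irreducible this forces an embedding $\pi\hookrightarrow\iota_P^G\rho_0$. Now $M$ has strictly smaller semisimple $F$-rank than $G$ (because $P$ is proper), so the induction hypothesis, applied to the irreducible admissible $(M^\theta,\chi\vert_{M^\theta})$-distinguished representation $\rho_0$, yields a $\theta\vert_M$-split parabolic subgroup $R=LU$ of $M$ and an irreducible $(L^\theta,\chi\vert_{L^\theta})$-relatively supercuspidal representation $(\rho,W)$ of $L$ with $\rho_0\hookrightarrow\iota_R^M\rho$; here we use $L^\theta=L\cap H=L^{\theta\vert_M}$ and $\chi\vert_{L^\theta}=(\chi\vert_{M^\theta})\vert_{L^\theta}$. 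Then $P_\rho:=RN$ is a $\theta$-split parabolic subgroup of $G$ with $\theta$-stable Levi factor $L$, and transitivity of normalized parabolic induction gives
\begin{align*}
\pi\hookrightarrow\iota_P^G\rho_0\hookrightarrow\iota_P^G\iota_R^M\rho\cong\iota_{P_\rho}^G\rho,
\end{align*}
completing the induction.

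The step that genuinely requires care — and the reason for organizing the proof as an induction that invokes the full theorem for the Levi $M$ — is that the conclusion demands a $\rho$ which is $(L^\theta,\chi\vert_{L^\theta})$-relatively supercuspidal for \emph{every} functional in $\Hom_{L^\theta}(\rho,\chi\vert_{L^\theta})$ (i.e.\ \Cref{defn-rsc}\eqref{rsc-defn-1} for each such functional), whereas \Cref{prop-twisted-kt08-1-11} only produces a single distinguished irreducible quotient of $\pi_N$; a one-step argument would leave open whether that quotient is relatively supercuspidal against all of its invariant functionals. The remaining points are routine bookkeeping: checking that $R\mapsto RN$ is the asserted bijection and respects properness, that $P_\rho=RN$ is $\theta$-split with $\theta$-stable Levi $L$ and with $\theta(P_\rho)=\theta(R)\theta(N)$ its opposite, and that $N'\cap M=U$ when $N'$ is the unipotent radical of $RN$, so that \Cref{prop-twisted-transitive} applies verbatim under the identification $\pi_{N'}\cong(\pi_N)_U$.
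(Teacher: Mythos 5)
Your proof is correct and follows essentially the same route as the paper: use \Cref{thm-twisted-rsc-condition} to select a minimal proper $\theta$-split parabolic with nonvanishing descended functional, deduce via \Cref{prop-twisted-transitive} that the Jacquet module is relatively supercuspidal with respect to that functional, apply \Cref{prop-twisted-kt08-1-11} and Frobenius reciprocity to embed $\pi$, and conclude by applying the induction hypothesis to the Levi together with transitivity of parabolic induction. The only differences are cosmetic: you induct on the semisimple $F$-rank of $G$ and settle the base case by vacuity of the criterion, whereas the paper inducts on the rank of the maximal $(\theta,F)$-split tori of $G/Z_G$ and handles the rank-zero case via compactness of $G/Z_GH$.
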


\begin{proof}
	Argue by induction on the rank $k$ of the maximal $(\theta,F)$-split tori of $G/Z_G$. If $k=0$, then by \cite[Proposition 4.7]{helminck--wang1993} $G/Z_GH$ is compact.  In this case, every irreducible $(H,\chi)$-distinguished representation of $G$ is $(H,\chi)$-relatively supercuspidal.  Thus, we assume that $k>0$.  If $\pi$ is $(H,\chi)$-relatively supercuspidal, then there is nothing to do.  Otherwise, there exists a nonzero element $\lambda \in \Hom_H(\pi,\chi)$ such that $\pi$ is not $(H,\chi,\lambda)$-relatively supercuspidal.  By \Cref{thm-twisted-rsc-condition}, there exists a proper $\theta$-split parabolic subgroup $P'=M'N'$ of $G$ such that $\lambda_{N',\chi}\neq 0$.  Let $Q = LU$ be minimal among proper $\theta$-split parabolic subgroups such that $\lambda_{U,\chi} \neq 0$. By \Cref{prop-twisted-transitive} and \Cref{thm-twisted-rsc-condition}, the admissible representation $(\pi_U,V_U)$ of $L$ is $(L^\theta, \chi\vert_{L^\theta}, \lambda_{U,\chi})$-relatively supercuspidal.  By \Cref{prop-twisted-kt08-1-11}, there exists an irreducible $(L^\theta, \chi\vert_{L^\theta})$-distinguished quotient $\rho'$ of $\pi_U$.  By Frobenius Reciprocity, there is a natural bijection
	\begin{align*}
	\Hom_G(\pi,\iota_Q^G \rho') & \simeq \Hom_L(\pi_U,\rho') \neq 0,	
	\end{align*}
	and $\pi$ is equivalent to an irreducible subrepresentation of $\iota_Q^G\rho'$.
	Note that, since $Q$ is proper in $G$, the rank of the maximal $(\theta,F)$-split tori in $L/Z_L$ is strictly less than $k$.
	By induction, applied to $L$ and $\rho'$, there exists a $\theta$-split parabolic subgroup $P=MN$ of $G$ contained in $Q$ and an $(M^\theta, \chi\vert_{M^\theta})$-relatively supercuspidal representation $\rho$ of $M=M\cap L$ such that $\rho'$ is equivalent to a subrepresentation of $\iota_{L\cap P}^L \rho$.  By the transitivity of parabolic induction, $\pi$ is equivalent to a subrepresentation of $\iota_Q^G (\iota_{L\cap P}^L\rho) \cong \iota_P^G\rho$.
\end{proof}
\section{Representations that are not relatively supercuspidal}\label{sec-not-rsc}
Let $\chi$ be a quasi-character of $H = G^\theta$.
Our goal is to study the support of relative matrix coefficients defined with respect to the $(H,\chi)$-equivariant linear forms $\lambda^G$ produced via \Cref{lem-hom-injects}.  Recall that a smooth representation $(\pi,V)$ of $G$ is $(H,\chi,\lambda^G)$-relatively supercuspidal if and only if all of the $\lambda^G$-relative matrix coefficients are compactly supported modulo $Z_GH$ (\Cref{defn-rsc}). 

\begin{note}
In this section, we must restrict our attention to unramified quasi-characters of $H$.  The author hopes to relax this restriction on $\chi$ in future work.
\end{note}
\subsection{Support of functions on $G/Z_GH$}\label{sec-support}
Assume that $\chi$ is an unramified quasi-character of $H$; in particular, $\chi$ is trivial on all maximal compact open subgroups of $H$.
Let $Q=LU$ be a $\theta$-stable parabolic subgroup of $G$ with $\theta$-stable Levi subgroup $L$ and unipotent radical $U$.  Note that the unipotent subgroups $U$ and $U^{\op}$ are $\theta$-stable.  
Moreover, the identity component of $Q^\theta$ is a parabolic subgroup of the identity component $H^\circ$ of $H$ with the expected Levi factorization 
\cite{helminck--wang1993}.  
Let $\rho$ be an irreducible representation of $L$ and assume that $\delta_Q^{1/2}\rho$ is $(L^\theta,\delta_{Q^\theta}\chi\vert_{L^\theta})$-distinguished.
Let $\lambda \in \Hom_{L^\theta}(\delta_Q^{1/2}\rho,\delta_{Q^\theta}\chi\vert_{L^\theta})$ be nonzero.  
Define $\pi = \iota _Q^G \rho$ and construct $\lambda^G \in \Hom_H(\pi,\chi)$ from $\lambda$ via \Cref{lem-hom-injects}.
Our goal is to prove the following.

\begin{thm}\label{thm-twisted-not-rsc}
Let $\chi$ be an unramified quasi-character of $H$.
If the induced representation $\pi = \iota_Q^G\rho$ is $(H,\chi,\lambda^G)$-relatively supercuspidal, then $\delta_Q^{1/2}\rho$ must be $(L^\theta, \delta_{Q^\theta}\chi\vert_{L^\theta}, \lambda)$-relatively supercuspidal.
\end{thm}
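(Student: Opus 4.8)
The plan is to argue by contraposition. Suppose $\delta_Q^{1/2}\rho$ is \emph{not} $(L^\theta,\delta_{Q^\theta}\chi\vert_{L^\theta},\lambda)$-relatively supercuspidal; I will exhibit $f\in V_\pi$ such that $\varphi_{\lambda^G,f}$ is not compactly supported modulo $Z_GH$, which by \Cref{defn-rsc} shows that $\pi$ is not $(H,\chi,\lambda^G)$-relatively supercuspidal. One may assume $Q\neq G$, since otherwise $\pi=\rho$ and $\lambda^G=\lambda$. Because $G/Q$ is compact, $\iota_Q^G\rho=\cind_Q^G(\delta_Q^{1/2}\otimes\rho)$, so every $f\in V_\pi$ is supported in a set of the form $Q\Omega_f$ with $\Omega_f$ compact; substituting into \eqref{def-lambdaG} yields $\supp\varphi_{\lambda^G,f}\subseteq\Omega_f^{-1}QH$, a compact neighbourhood of the closed coset $QH$ (closed by \cite[Lemma 1.7]{helminck--wang1993}). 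Thus the entire question concerns the behaviour of $\varphi_{\lambda^G,f}$ near the closed orbit, and noncompactness of support modulo $Z_GH$ can only come from escape ``along $QH$''.

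Put $\chi''=\delta_{Q^\theta}\chi\vert_{L^\theta}$ and, for $w\in V_\rho$, write $\varphi^{\rho}_{\lambda,w}(\ell)=\langle\lambda,(\delta_Q^{1/2}\rho)(\ell^{-1})w\rangle$ for $\ell\in L$; by definition $\delta_Q^{1/2}\rho$ is $(L^\theta,\chi'',\lambda)$-relatively supercuspidal exactly when every $\varphi^{\rho}_{\lambda,w}$ has compact support modulo $Z_LL^\theta$. Applying \Cref{thm-twisted-rsc-condition} with $L$ in place of $G$: if this fails, there is a proper $\theta$-split parabolic subgroup $P'=M'N'$ of $L$ with $\lambda_{N',\chi''}\neq 0$. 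Using the $\lambda_{N',\chi''}$-version of \cite[Proposition 5.5]{kato--takano2008} (as in \Cref{sec-equivariant-kt08}) together with the fact that $s\mapsto\langle\lambda_{N',\chi''},\pi_{N'}(s)[v]\rangle$ is a nonzero exponential--polynomial function on the $(\theta,F)$-split torus $S_{M'}$, one produces $w\in V_\rho$ and a sequence $(s_n)$ running to infinity in a cone of $S_{M'}$ (which one may also take to be suitably dominant with respect to $Q$, after multiplying by deep central $(\theta,F)$-split elements), deep in which $\varphi^{\rho}_{\lambda,w}(s_n)\neq 0$. Since $M'\subsetneq L$ this cone escapes to infinity in $L/Z_LL^\theta$, and since each $s_n$ is $\theta$-split a short argument (any subset of $S_{M'}$ with bounded image in $G/Z_GH$ has bounded image in $S_{M'}$ modulo $Z_G$) shows that $(s_n)$ escapes to infinity in $G/Z_GH$ as well.

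For the realization step, fix a small $\theta$-stable compact open subgroup $K$ of $G$ with an Iwahori factorization relative to $Q=LU$ and $Q^{\op}=LU^{\op}$, small enough that $\rho(L\cap Q\cap K)w=w$; as $\chi$ is unramified, $K\cap H\subseteq\ker\chi$ automatically. Since $G/Q$ is compact the double coset $QK$ is open and closed, so there is a well-defined $f_w\in V_\pi$ supported on $QK$ with $f_w(\ell u)=\delta_Q^{1/2}(\ell)\rho(\ell)w$ for $\ell\in L$, $u\in U$. The crux is to establish, for $s$ deep in the chosen cone, an identity of the shape
\begin{align*}
\varphi_{\lambda^G,f_w}(s)&=c\,\varphi^{\rho}_{\lambda,w}(s),
\end{align*}
with $c>0$ independent of $s$. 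This rests on two ingredients. A \emph{geometric} one: for $s$ deep in the cone, the only classes of $Q^\theta\backslash H$ on which the integrand $h\mapsto\langle\lambda,\chi(h)^{-1}f_w(hs^{-1})\rangle$ of \eqref{def-lambdaG} can be nonzero are those near the base class; this follows from an analysis of $H\cap QKs$ using the Iwahori factorization of $K$, the $\theta$-stability of $U$ and $U^{\op}$, and the contracting/expanding behaviour of conjugation by $s$. And an \emph{algebraic} one: on that part, the $Q$-equivariance of $f_w$, the $(L^\theta,\chi'')$-equivariance of $\lambda$, and the Levi decomposition $Q^\theta=L^\theta U^\theta$ of the parabolic subgroup $Q^\theta$ of $H^\circ$ collapse the integral to a positive multiple of $\langle\lambda,(\delta_Q^{1/2}\rho)(s^{-1})w\rangle$.

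Granting the identity, $\varphi_{\lambda^G,f_w}(s_n)\neq 0$, so $(s_n)\subseteq\supp\varphi_{\lambda^G,f_w}$, and by the second paragraph $\supp\varphi_{\lambda^G,f_w}$ has noncompact image in $G/Z_GH$; hence $\pi$ is not $(H,\chi,\lambda^G)$-relatively supercuspidal, which is the desired contrapositive. I expect the main obstacle to be the geometric ingredient above: pinning down $H\cap QKs$ for $s$ deep in the cone tightly enough that only the closed orbit $QH$ contributes to the relative matrix coefficient. By comparison, the construction of $f_w$, the algebraic collapse, and the comparison of the quotients $L/Z_LL^\theta$ and $G/Z_GH$ are routine, the last thanks to the $\theta$-splitness of $S_{M'}$.
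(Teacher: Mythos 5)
Your overall strategy coincides with the paper's: argue the contrapositive, build a section $f_w\in V_\pi$ supported on $QK$ (with $K$ Iwahori-factorized relative to $Q$) taking the value $w$ near the identity, evaluate $\lambda^G$ from \eqref{def-lambdaG} on translates by elements of $L$, and reduce to a $\lambda$-relative matrix coefficient of $\delta_Q^{1/2}\rho$, with unramifiedness of $\chi$ guaranteeing a nonzero (positive) constant. But the crux of the argument --- the identity $\varphi_{\lambda^G,f_w}(\ell)=c\,\varphi^{\rho}_{\lambda,w}(\ell)$, i.e.\ the claim that only classes coming from the closed orbit contribute to the integral over $Q^\theta\backslash H$ --- is precisely what you do not prove; you describe the ingredients it ``rests on'' and then name it yourself as the main obstacle. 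That is a genuine gap, and it is the heart of the theorem. Moreover, the route you envision (an asymptotic/contraction analysis of $H\cap QKs$ valid only for $s$ ``deep in the cone,'' with a constant $c$ independent of $s$) is misdirected: no such asymptotic argument is needed. The integrand is nonzero only when $\ell h\ell^{-1}\in Q(U^{\op}\cap K)$, i.e.\ $h\in Q(U^{\op}\cap\ell^{-1}K\ell)$; writing $h=q\bar u$ and applying $\theta$, the injectivity of the product map $L\times U\times U^{\op}\to G$ together with the $\theta$-stability of $U$ and $U^{\op}$ forces $\theta(q)=q$ and $\theta(\bar u)=\bar u$, so $h$ is equivalent in $Q^\theta\backslash H$ to an element of $(U^{\op}\cap\ell^{-1}K\ell)^\theta$ and $f_w(\ell\bar u\ell^{-1})=w$. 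Hence, for \emph{every} $\ell\in L$,
\begin{align*}
\varphi_{\lambda^G,f_w}(\ell)=c_{\ell,\chi}\,\delta_Q^{1/2}(\ell^{-1})\,\varphi_{\lambda,w}(\ell),
\qquad
c_{\ell,\chi}=\int_{Q^\theta\backslash(U^{\op}\cap\ell^{-1}K\ell)^\theta}\chi(\bar u)^{-1}\,d\mu(\bar u),
\end{align*}
and $c_{\ell,\chi}>0$ because $\chi$, being unramified, is trivial on the compact group $(U^{\op}\cap\ell^{-1}K\ell)^\theta$; the dependence of $c_{\ell,\chi}$ on $\ell$ is harmless, so your insistence on a constant independent of $s$ is neither needed nor justified.

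Two secondary remarks. First, your production of the test vector and sequence is more roundabout than necessary: failure of relative supercuspidality of $\delta_Q^{1/2}\rho$ \emph{by definition} gives a relative matrix coefficient with noncompact support modulo $Z_LL^\theta$, and the paper then applies the relative Cartan decomposition of Delorme--S\'echerre to $L$ to extract a sequence $\ell_n=o_ns_nx^{-1}$ with $s_n$ approaching $S$-infinity; your detour through \Cref{thm-twisted-rsc-condition} for $L$ plus exponential--polynomial asymptotics on $S_{M'}$ could be made to work, but it re-proves machinery you could simply quote, and the ``suitably dominant after multiplying by deep central elements'' step is left vague. Second, your concluding step (nonvanishing along $\{s_n\}$ implies failure of relative supercuspidality) is handled in the paper by \Cref{lem-twisted-kt-6-7-contrapositive} and \Cref{support-not-RSC}, which also cover the general case where the nonvanishing sequence has a bounded factor $o_n$ and a fixed twist $x^{-1}$; your ``short argument'' that a $\theta$-split sequence escaping to infinity has noncompact image in $G/Z_GH$ is plausible but unproved, and quoting these lemmas is the cleaner way to finish.
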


\begin{rmk}
	We can rephrase the assumptions on $\rho$ by asking that $\rho$ is  $(L^\theta,\delta_{Q^\theta}(\delta_Q^{-1/2}\chi)\vert_{L^\theta})$-distinguished. In many, but not all, situations the character $\delta_{Q^\theta}\delta_Q^{-1/2}\vert_{L^\theta}$ is trivial, see \cite{smith2018b,smith2018}.  For instance, $\delta_{Q^\theta}\delta_Q^{-1/2}\vert_{L^\theta}$ is not always trivial in the case that $G = \GL_{2n}(F)$ and $H = \Sp_{2n}(F)$ \cite{offen2006b}.
\end{rmk}

If $\chi=1$ is trivial, we identify $\Hom_{L^\theta}(\delta_Q^{1/2}\rho,\delta_{Q^\theta})$ and  $\Hom_{L^\theta}(\rho, \delta_{Q^\theta}\delta_Q^{-1/2}\vert_{L^\theta})$.
As a special case of \Cref{thm-twisted-not-rsc}, we obtain the following.

\begin{thm}\label{thm-not-rsc}
If the induced representation $\pi=\iota_Q^G\rho$ is $(H,\lambda^G)$-relatively supercuspidal, then $\rho$ is $(L^\theta, \chi', \lambda)$-relatively supercuspidal, where  $\chi'$ is the character $\delta_{Q^\theta}\delta_Q^{-1/2}\vert_{L^\theta}$.
\end{thm}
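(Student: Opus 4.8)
The plan is in two stages: first reduce \Cref{thm-not-rsc} to \Cref{thm-twisted-not-rsc}, which is essentially bookkeeping, and then prove \Cref{thm-twisted-not-rsc}, which is the substance. For the reduction: under the isomorphism $\Hom_{L^\theta}(\delta_Q^{1/2}\rho,\delta_{Q^\theta})\cong\Hom_{L^\theta}(\rho,\delta_{Q^\theta}\delta_Q^{-1/2}\vert_{L^\theta})=\Hom_{L^\theta}(\rho,\chi')$ obtained by twisting a functional by the nowhere-vanishing character $\delta_Q^{-1/2}\vert_{L^\theta}$, the functional $\lambda$ and its closed-orbit lift $\lambda^G$ of \Cref{lem-hom-injects} are carried to themselves, and $\delta_Q^{1/2}\rho$ is $(L^\theta,\delta_{Q^\theta},\lambda)$-relatively supercuspidal precisely when $\rho$ is $(L^\theta,\chi',\lambda)$-relatively supercuspidal, since the corresponding relative matrix coefficients on $L$ differ by the locally constant, nowhere-zero function $\delta_Q^{\pm1/2}$ and therefore have the same support modulo $Z_LL^\theta$.

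For \Cref{thm-twisted-not-rsc} I would argue by contraposition; one may assume $Q\neq G$, as otherwise $L=G$ and the statement is vacuous. Since $Q$ and $L$ are $\theta$-stable, $\theta$ restricts to an $F$-involution of the connected reductive group $L$ with fixed points $L^\theta$, so \Cref{thm-twisted-rsc-condition} applies both to $\pi=\iota_Q^G\rho$ over $(G,H,\chi)$ and to $\delta_Q^{1/2}\rho$ over $(L,L^\theta,\delta_{Q^\theta}\chi\vert_{L^\theta})$. Thus it suffices to show: if $\lambda_{N_R,\delta_{Q^\theta}\chi\vert_{L^\theta}}\neq0$ for some proper $\theta$-split parabolic subgroup $R=M_RN_R$ of $L$, then $(\lambda^G)_{N,\chi}\neq0$ for some proper $\theta$-split parabolic subgroup $P=MN$ of $G$. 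To produce $P$, I would let $S_R$ be the $(\theta,F)$-split component of $M_R$ --- a $(\theta,F)$-split torus of $G$ --- and take $P$ to be the $\theta$-split parabolic subgroup of $G$ determined by a generic one-parameter subgroup of $S_R$ lying in the chamber that cuts out $R$ inside $L$; then $M=C_G(S_R)$, $P\cap L=R$, $M\cap L=M_R$, $N\cap L=N_R$, and $P$ is proper in $G$ because $S_R\not\subseteq Z_G$. The $\theta$-splitness of $P$ is forced by that of $S_R$ exactly as in \Cref{sec-tori-involution}, after conjugating the data $(S_0,A_0,\Delta_0)$ by an element of $(\Hbf\Mbf_0)(F)$ via \Cref{KT08-lem-2.5} so that $S_R\subseteq S_0$ and $P$ is $\Delta_0$-standard.

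The heart of the argument is to match $(\lambda^G)_{N,\chi}$ with the analogue of $\lambda^G$ built from $\lambda_{N_R,\delta_{Q^\theta}\chi\vert_{L^\theta}}$ inside $M$. The Jacquet module $\pi_N=(\iota_Q^G\rho)_N$ carries the geometric-lemma filtration indexed by $P\backslash G/Q$; because $P$ was built from $S_R\subseteq L$ and $Q$ is $\theta$-stable, the closed cell $PeQ$ contributes the subquotient $\iota_{M\cap Q}^M\bigl((\delta_Q^{1/2}\rho)_{N_R}\bigr)$, up to the normalizing modular twists, and this is exactly the piece that lies over the closed orbit $QeH$ of $Q\backslash G/H$ through which $\lambda^G$ is defined by the explicit integral of \Cref{lem-hom-injects}. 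I would then verify, using that integral formula together with the description of $\lambda_{N,\chi}$ via canonical lifts in \Cref{defn-twisted-rPlambda}, that the composition of $(\lambda^G)_{N,\chi}$ with the inclusion of this subquotient coincides --- up to the bookkeeping of $\delta_Q,\delta_P,\delta_{Q^\theta},\delta_{P^\theta}$ and the $\delta^{\pm1/2}$-normalizations of the Jacquet and induction functors --- with $\bigl(\lambda_{N_R,\delta_{Q^\theta}\chi\vert_{L^\theta}}\bigr)^M$, the functional produced from $\lambda_{N_R,\ldots}$ by the closed orbit in $(M\cap Q)\backslash M/M^\theta$ via \Cref{lem-hom-injects} applied to $M$ in place of $G$. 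Since $\lambda_{N_R,\ldots}\neq0$ and the map of \Cref{lem-hom-injects} is injective, this forces $(\lambda^G)_{N,\chi}\neq0$, finishing the contrapositive.

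I expect the main obstacle to be precisely this last compatibility: one must identify the closed-cell subquotient of $(\iota_Q^G\rho)_N$ and check that $\lambda^G$, which sees only the restriction of functions to $H$, descends through it rather than through the higher cells; and one must track the four modular characters and the two levels of normalization so that the character finally appearing on $M^\theta$ is $\chi\vert_{M^\theta}$. It is here that the unramifiedness of $\chi$ should enter repeatedly --- to restrict $\chi$ ``in stages'' to $L^\theta$, $M^\theta$ and $(M\cap Q)^\theta$ consistently, to kill $\chi$ on the compact-open subgroups occurring in the canonical lifts and in the Mackey integral, and to keep the comparison of $\delta_Q^{1/2}\vert_{L^\theta}$ with $\delta_{Q^\theta}$ manageable. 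An alternative, more hands-on route --- closer to the paper's emphasis on supports --- would avoid \Cref{thm-twisted-rsc-condition} for $L$: build $f\in V_\pi$ from a fixed $w\in V_\rho$ whose relative matrix coefficient on $L$ is not compactly supported modulo $Z_LL^\theta$ together with a bump near the base point of $Q^\theta\backslash H$, and compute $\varphi_{\lambda^G,f}$ directly on suitable translates of a cone in $S_L$; but the difficulty there is the same one in disguise, namely controlling which elements $hg^{-1}$ remain in the closed set $QH$.
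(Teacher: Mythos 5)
Your reduction of \Cref{thm-not-rsc} to \Cref{thm-twisted-not-rsc} by twisting along the nowhere-vanishing character $\delta_Q^{-1/2}\vert_{L^\theta}$ is exactly what the paper does, and is fine. But your main route to \Cref{thm-twisted-not-rsc} --- contraposition through \Cref{thm-twisted-rsc-condition} on both $G$ and $L$, then the geometric-lemma filtration of $(\iota_Q^G\rho)_N$ along a $\theta$-split $P$ built from $S_R$, then matching $(\lambda^G)_{N,\chi}$ with the closed-orbit functional $\bigl(\lambda_{N_R,\delta_{Q^\theta}\chi\vert_{L^\theta}}\bigr)^M$ on the closed cell --- has a genuine gap at precisely the step you defer: the asserted compatibility is the entire content of the theorem, not bookkeeping. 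To make it a proof you would have to (i) show that the identity cell $P e Q$ really is the closed one for your choice of $P$ (this is not automatic: $P$ and $Q$ need not contain a common minimal parabolic, and $RU$ itself is not $\theta$-split, which is why $P$ must be taken with Levi $C_G(S_R)$), (ii) show that $(\lambda^G)_{N,\chi}$ interacts with that graded piece in the claimed way rather than receiving contributions from the other cells, and (iii) actually compute the pairing of $\lambda^G$ against canonical lifts (\Cref{defn-twisted-rPlambda}) through the Mackey integral of \Cref{lem-hom-injects}. Tellingly, your outline makes no visible use of the hypothesis that $\chi$ is unramified, a hypothesis the author explicitly flags as one he hopes to remove in future work; in the paper it is used at exactly one point, to guarantee that the constant $c_{\ell,\chi}=\int_{Q^\theta\backslash(U^{\op}\cap\ell^{-1}K\ell)^\theta}\chi(\bar u)^{-1}\,d\mu(\bar u)$ is a nonzero volume rather than an oscillatory integral that could vanish. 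Any honest verification of your step (iii) must confront the same kind of integral, so the difficulty has been displaced into the step you have not carried out, not resolved.

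The ``alternative, more hands-on route'' you mention in one sentence at the end is in fact the paper's proof, and the obstacle you flag there is resolved concretely as follows. Choose $K$ compact open with Iwahori factorization with respect to $Q$, and let $f_v\in V_\pi$ be supported on $QK=Q(U^{\op}\cap K)$ with $f_v\equiv v$ on $U^{\op}\cap K$, where $v$ is chosen (via the relative Cartan decomposition for $L$) so that $\varphi_{\lambda,v}$ is nonzero along a sequence approaching $S$-infinity for a maximal non-central $(\theta,F)$-split torus $S$ of $L$. For $\ell\in L$, $f_v(\ell h\ell^{-1})\neq0$ forces $h\in Q(U^{\op}\cap\ell^{-1}K\ell)$; writing $h=q\bar u$ and applying $\theta$ (both $Q$ and $U^{\op}$ are $\theta$-stable and the product map $Q\times U^{\op}\to G$ is injective) gives $q\in Q^\theta$ and $\bar u\in (U^{\op})^\theta$, so the Mackey integral collapses to $\varphi_{\lambda^G,f_v}(\ell)=c_{\ell,\chi}\,\delta_Q^{1/2}(\ell^{-1})\,\varphi_{\lambda,v}(\ell)$ with $c_{\ell,\chi}>0$ by unramifiedness. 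Then \Cref{support-not-RSC} (that is, \Cref{lem-twisted-kt-6-7-contrapositive} combined with \Cref{thm-twisted-rsc-condition}) applied to the sequence approaching $S$-infinity shows $\pi$ is not $(H,\chi,\lambda^G)$-relatively supercuspidal, which is the contrapositive you want. So either carry out this support computation in full, or supply a complete proof of the closed-cell compatibility (including points (i)--(iii) above); as written, your main argument does not yet constitute a proof.
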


By applying \Cref{thm-twisted-rsc-condition} (the $(H,\chi)$-analogue of \cite[Theorem 6.2]{kato--takano2008}), we can rephrase \Cref{thm-twisted-not-rsc} as the following corollary.

\begin{cor}\label{cor-twisted-not-rsc}
If $(\lambda^G)_{N,\chi} = 0$ for all proper $\theta$-split parabolic subgroups $P=MN$ of $G$, then $\lambda_{N',\delta_{Q^\theta}\chi\vert_{L^\theta}}=0$ for all proper $\theta$-split parabolic subgroups $P' = M'N'$ of $L$.
\end{cor}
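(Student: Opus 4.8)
The plan is to pass through \Cref{cor-twisted-not-rsc}, which is the reformulation via \Cref{thm-twisted-rsc-condition}: we must show that if $(\lambda^G)_{N,\chi}=0$ for every proper $\theta$-split parabolic $P=MN$ of $G$, then $\lambda_{N',\delta_{Q^\theta}\chi\vert_{L^\theta}}=0$ for every proper $\theta$-split parabolic $P'=M'N'$ of $L$. The contrapositive is the natural target: assume $\delta_Q^{1/2}\rho$ is \emph{not} $(L^\theta,\delta_{Q^\theta}\chi\vert_{L^\theta},\lambda)$-relatively supercuspidal, so by \Cref{thm-twisted-rsc-condition} applied inside $L$ there is a proper $\theta$-split parabolic $P'=M'N'$ of $L$ with $\lambda_{N',\delta_{Q^\theta}\chi\vert_{L^\theta}}\neq 0$; I would then produce a proper $\theta$-split parabolic $P=MN$ of $G$ with $(\lambda^G)_{N,\chi}\neq 0$, which shows $\pi$ is not $(H,\chi,\lambda^G)$-relatively supercuspidal, again by \Cref{thm-twisted-rsc-condition}.

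The key geometric step is to build the right parabolic $P$ of $G$ from $P'$ of $L$. Since $P'=M'N'$ is $\theta$-split in $L$ and $Q=LU$ is $\theta$-stable, I expect $P:=P'U=M'(N'U)$ to be a parabolic subgroup of $G$ contained in $Q$, with Levi $M=M'$, unipotent radical $N=N'U$, and moreover $\theta$-split: indeed $\theta(P)=\theta(P')\theta(U)$, and because $\theta(P')$ is opposite to $P'$ in $L$ while $\theta(U)=U^{\op}$ (using that $Q$ is $\theta$-stable with $\theta$-stable Levi, so $\theta$ swaps $U$ and $U^{\op}$), one checks $\theta(P)$ is opposite to $P$ in $G$; and $M=M'$ is $\theta$-stable because $M'$ is. The transitivity of Jacquet restriction gives $\pi_N\cong(\pi_U)_{N'\cap L}=(\pi_U)_{N'}$, and the key input is that the Jacquet module $\pi_U=(\iota_Q^G\rho)_U$ of a representation induced from the \emph{same} parabolic $Q$ has $\delta_Q^{1/2}\rho$ (or a representation filtered with $\delta_Q^{1/2}\rho$ as a subquotient coming from the closed orbit) as a piece — here the closed orbit in $Q\backslash G/H$ is precisely the identity coset, so the relevant piece of $\pi_U$ restricted to the $(\theta,F)$-data really is $\delta_Q^{1/2}\rho$. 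Then \Cref{prop-twisted-transitive} (the commuting-square compatibility of the $\lambda\mapsto\lambda_{N,\chi}$ construction with Jacquet restriction in stages) should identify $(\lambda^G)_{N,\chi}$ with $\big((\lambda^G)_{U,\chi}\big)_{N',\,\chi\vert_{M^\theta}}$, and a relative-Mackey / geometric-lemma computation of $(\lambda^G)_{U,\chi}$ in terms of $\lambda$ — via \Cref{lem-hom-injects} applied now to the pair $(U,L^\theta)$ with the modulus shift $\delta_{Q^\theta}$ accounting for the quotient $Q^\theta\backslash H$ — should show $(\lambda^G)_{U,\chi}$ restricted to the closed-orbit summand of $\pi_U$ equals $\lambda$ (up to the unramified twist, which is where $\chi$ unramified is used so that $\chi$ is trivial on the relevant compact groups and the canonical-lift constructions are unaffected). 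Composing, $(\lambda^G)_{N,\chi}$ restricted to the closed-orbit summand equals $\lambda_{N',\delta_{Q^\theta}\chi\vert_{L^\theta}}$, which is nonzero by assumption, hence $(\lambda^G)_{N,\chi}\neq 0$.

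I would carry this out in the order: (i) verify $P=P'U$ is a proper $\theta$-split parabolic of $G$ with the claimed Levi/unipotent structure; (ii) compute $\pi_U$ and isolate the closed-orbit summand, identifying it with $\delta_Q^{1/2}\rho$ as an $L$-representation (the closed orbit contributes a \emph{subrepresentation}, which is what matters since nonvanishing on a subrepresentation suffices); (iii) use \Cref{lem-hom-injects} plus the relative Mackey theory to compute $(\lambda^G)_{U,\chi}$ on that summand and match it with $\lambda$; (iv) invoke \Cref{prop-twisted-transitive} to restrict in stages and conclude $(\lambda^G)_{N,\chi}\neq 0$; (v) translate back through \Cref{thm-twisted-rsc-condition}. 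The main obstacle I anticipate is step (iii): carefully tracking the normalization factors $\delta_Q^{1/2}$, $\delta_{Q^\theta}$, and the Jacquet-module normalization $\delta_P^{-1/2}$ simultaneously, and making precise the sense in which the closed-orbit piece of $\pi_U$ "is" $\delta_Q^{1/2}\rho$ — this requires a clean statement of the relative geometric lemma for $U\backslash$(something)$/L^\theta$ and the observation that restricting matrix coefficients / canonical lifts to the closed orbit is compatible with the $\lambda\mapsto\lambda_{N,\chi}$ operation. The unramifiedness of $\chi$ enters precisely to guarantee that the adapted families $\{K_n\}$ and the canonical-lifting arguments from \Cref{sec-equivariant-kt08} apply without the compact-subgroup complications that a ramified $\chi$ would introduce, which is why the theorem is stated only in that case.
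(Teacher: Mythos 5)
Your plan breaks down at the key geometric step. For a $\theta$-\emph{stable} parabolic $Q=LU$ with $\theta$-stable Levi $L$, the involution does \emph{not} swap $U$ and $U^{\op}$; it preserves each of them (this is stated explicitly at the start of \Cref{sec-support}: ``the unipotent subgroups $U$ and $U^{\op}$ are $\theta$-stable''). Swapping would occur for a $\theta$-\emph{split} $Q$. Consequently $P:=P'U$ is not $\theta$-split in $G$: $\theta(P)=\theta(P')U$, so $P\cap\theta(P)\supseteq M'U$ contains the nontrivial unipotent group $U$ and cannot be a Levi, i.e.\ $\theta(P)$ is not opposite to $P$. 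Worse, the intermediate object $(\lambda^G)_{U,\chi}$ you want to restrict in stages does not exist in this framework: the descent $\lambda\mapsto\lambda_{N,\chi}$ of \Cref{defn-twisted-rPlambda} is defined only along $\theta$-split parabolics (the canonical-lift construction uses $S_M^-(\epsilon)$ and the opposition $\theta(P)=P^{\op}$), and \Cref{prop-twisted-transitive} requires both parabolics in the chain to be $\theta$-split. So the identification $(\lambda^G)_{N,\chi}=\bigl((\lambda^G)_{U,\chi}\bigr)_{N'}$ on a ``closed-orbit summand'' of $\pi_U$ is not available, and steps (i)--(iv) of your outline do not go through as stated. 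Your account of where unramifiedness of $\chi$ enters is also off: it is not about the adapted families $\{K_n\}$, but about the nonvanishing of an explicit constant (the integral of $\chi^{-1}$ over a compact unipotent fixed-point group).

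The paper's argument avoids Jacquet modules of $\pi$ altogether and proves the contrapositive by a direct comparison of supports of matrix coefficients. One picks $v\in V_\rho$ whose $\lambda$-relative matrix coefficient is not compactly supported modulo $S_LL^\theta$, chooses $K$ with Iwahori factorization relative to $Q$, and defines the section $f_v\in V_\pi$ supported on $Q(U^{\op}\cap K)$ with $f_v\equiv v$ on $U^{\op}\cap K$. A computation of the closed-orbit integral \eqref{def-lambdaG}, using that any $\theta$-fixed $h\in Q(U^{\op}\cap\ell^{-1}K\ell)$ has $\theta$-fixed $Q$- and $U^{\op}$-components, yields $\varphi_{\lambda^G,f_v}(\ell)=c_{\ell,\chi}\,\delta_Q^{1/2}(\ell^{-1})\,\varphi_{\lambda,v}(\ell)$ for $\ell\in L$, where $c_{\ell,\chi}$ is the integral \eqref{eq-volume-const}; unramifiedness of $\chi$ guarantees $c_{\ell,\chi}>0$. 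Then, taking a maximal non-central $(\theta,F)$-split torus $S$ of $L$ (which is a $(\theta,F)$-split torus of $G$, attached to a proper $\theta$-split parabolic of $G$) and a sequence approaching $S$-infinity on which $\varphi_{\lambda,v}$ is nonzero, \Cref{lem-twisted-kt-6-7-contrapositive} and \Cref{support-not-RSC} give $(\lambda^G)_{N,\chi}\neq 0$ for some proper $\theta$-split $P=MN$ of $G$, and \Cref{thm-twisted-rsc-condition} (applied in $G$ and in $L$) converts this into the stated corollary. If you want to salvage a Jacquet-module route, you would have to relate the descended forms along genuinely $\theta$-split parabolics of $G$ to data on $L$ through the double cosets $Q\backslash G/P$, which is a substantially different and more delicate analysis than the one-line construction $P=P'U$.
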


Again as a special case, by applying \Cref{kt08-rsc-thm} (\cite[Theorem 6.2]{kato--takano2008}), we can rephrase \Cref{thm-not-rsc} as the following corollary.

\begin{cor}\label{cor-not-rsc}
If  $(\lambda^G)_N = 0$ for all proper $\theta$-split parabolic subgroups $P=MN$ of $G$, then $\lambda_{N',\chi'}=0$ for all proper $\theta$-split parabolic subgroups $P' = M'N'$ of $L$, where $\chi' = \delta_{Q^\theta}\delta_Q^{-1/2}\vert_{L^\theta}$.
\end{cor}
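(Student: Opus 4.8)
I will prove the master result \Cref{thm-twisted-not-rsc}; \Cref{cor-twisted-not-rsc}, \Cref{thm-not-rsc} and the final \Cref{cor-not-rsc} then follow by the rephrasings already recorded in the text (via \Cref{thm-twisted-rsc-condition} and \Cref{kt08-rsc-thm} applied to $L$). The plan is to argue by contraposition: write $\sigma=\delta_Q^{1/2}\rho$ and $\eta=\delta_{Q^\theta}\chi\vert_{L^\theta}$, assume $\sigma$ is not $(L^\theta,\eta,\lambda)$-relatively supercuspidal, and produce a vector $f$ in the space of $\pi=\iota_Q^G\rho$ whose $\lambda^G$-relative matrix coefficient is not compactly supported modulo $Z_GH$. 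By \Cref{defn-rsc} the hypothesis gives a vector $w$ in the space of $\rho$ whose relative matrix coefficient on $L$, which I write $\varphi^L_{\lambda,w}\colon l\mapsto\ip{\lambda}{\sigma(l^{-1})w}$, is not compactly supported modulo $Z_LL^\theta$ (that $\rho$, hence $\sigma$, is irreducible, so an $\omega$-representation, is what makes this notion meaningful).

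The test vector is the key device. Since $Q\cap U^{\op}=\{1\}$, the set $QU^{\op}$ is open in $G$, and for a compact open subgroup $\bar\Omega$ of $U^{\op}$ there is a well-defined smooth function $f$ on $G$, supported on $Q\bar\Omega$, determined by $f(lu\bar u)=\delta_Q^{1/2}(l)\rho(l)w$ for $l\in L,\,u\in U,\,\bar u\in\bar\Omega$; one checks directly that $f\in V_\pi$. Using the formula for $\lambda^G$ from \Cref{lem-hom-injects},
\[
\varphi_{\lambda^G,f}(l)=\int_{Q^\theta\backslash H}\chi(h)^{-1}\ip{\lambda}{f(hl^{-1})}\,d\mu(h),\qquad l\in L .
\]
Because $\supp f\subseteq QU^{\op}$ and $hl^{-1}\in QU^{\op}\iff h\in QU^{\op}$, only $h$ in $H\cap QU^{\op}$ contribute; and since $Q$ and $U^{\op}$ are $\theta$-stable, uniqueness of the big-cell factorisation gives $H\cap QU^{\op}=Q^\theta U^{\op,\theta}$. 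Writing such an $h$ as $q^\theta\bar u_h$ with $q^\theta=l^\theta u^\theta\in Q^\theta$ and $\bar u_h\in U^{\op,\theta}$, and commuting $\bar u_h$ past $l^{-1}$, one finds $f(hl^{-1})=\sigma(l^\theta)\sigma(l^{-1})w$ when $\bar u_h\in l^{-1}\bar\Omega l$ (and $0$ otherwise), \emph{independently of the cell coordinate $\bar u_h$}. The $L^\theta$-twist is absorbed: $\ip{\lambda}{\sigma(l^\theta)\sigma(l^{-1})w}=\eta(l^\theta)\varphi^L_{\lambda,w}(l)=\delta_{Q^\theta}(l^\theta)\chi(l^\theta)\varphi^L_{\lambda,w}(l)$, while $\chi(h)^{-1}=\chi(l^\theta)^{-1}$ because $\chi$ is unramified, hence trivial on the unipotent groups $U^\theta$ and $U^{\op,\theta}$. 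Integrating over the open cell $Q^\theta\backslash Q^\theta U^{\op,\theta}\cong U^{\op,\theta}$, on which $\mu$ is a Haar measure of $U^{\op,\theta}$ up to a positive constant $c_0$, yields
\[
\varphi_{\lambda^G,f}(l)=c(l)\,\varphi^L_{\lambda,w}(l),\qquad c(l)=c_0\cdot\mathrm{vol}\bigl(l^{-1}\bar\Omega l\cap U^{\op,\theta}\bigr)>0 .
\]
(The relation $c(ll^\theta)=\delta_{Q^\theta}(l^\theta)c(l)$, from conjugation scaling the Haar measure on $U^{\op,\theta}$, exactly matches the discrepancy between $\eta$ and $\chi\vert_{L^\theta}$ — a consistency check on the computation.) In particular $\supp(\varphi_{\lambda^G,f})\cap L=\supp(\varphi^L_{\lambda,w})$.

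It remains to upgrade non-compactness of $\supp(\varphi^L_{\lambda,w})$ modulo $Z_LL^\theta$ (in $L$) to non-compactness of $\supp(\varphi_{\lambda^G,f})$ modulo $Z_GH$ (in $G$). For this I use the structure theory. By \Cref{thm-twisted-rsc-condition} applied to $L$ there is a proper $\theta$-split parabolic $P'=M'N'$ of $L$ with $\lambda_{N',\eta}\neq 0$; Kato and Takano's asymptotic formula (\cite[Proposition 5.5]{kato--takano2008} in its $(L^\theta,\eta)$-version), together with decomposing a chosen vector into its $S^L_{M'}$-exponent components, then produces $v_0$ and a sequence $s_n$ in the $(\theta,F)$-split torus $S^L_{M'}$ going to infinity along the antidominant cone with $\varphi^L_{\lambda,v_0}(s_n^{-1})\neq 0$; applying the computation above with $w=v_0$ gives $\varphi_{\lambda^G,f}(s_n^{-1})=c(s_n^{-1})\varphi^L_{\lambda,v_0}(s_n^{-1})\neq 0$. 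Now $S^L_{M'}$ is $(\theta,F)$-split in $G$, and $S^L_{M'}\cap Z_GH$ lies in finitely many cosets of $S^L_{M'}\cap Z_G$ (if $s\in S^L_{M'}\cap Z_GH$ then $s^2\in Z_G$, and $2$-torsion in the relevant split torus is finite since $p$ is odd), so the Cartan decomposition of $G/H$ forces $Z_Gs_n^{-1}H$ to escape in $Z_G\backslash G/H$. Hence $\varphi_{\lambda^G,f}\notin C^\infty_{\omega,0}(G,H,\chi)$ and $\pi$ is not $(H,\chi,\lambda^G)$-relatively supercuspidal, completing the proof.

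The main obstacle is this last step: transferring non-compactness of support from the symmetric space of $L$ to that of $G$. The cell computation in the second paragraph is the technical core, but it becomes essentially bookkeeping once the test vector is taken supported on the open cell $QU^{\op}$ — this is what forces the $h$-integral to localise onto the open $H$-cell $Q^\theta U^{\op,\theta}$ and makes $f(hl^{-1})$ constant along that cell. What genuinely needs input from the theory of $p$-adic symmetric spaces is controlling how a direction escaping to infinity inside $L$ sits inside $G/H$, which I handle through the $(\theta,F)$-split tori and the polar/Cartan decomposition.
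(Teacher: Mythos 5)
Your core computation is the paper's: the test section supported on the open cell $Q\bar\Omega$ (the paper takes $\bar\Omega=U^{\op}\cap K$ with $K$ having Iwahori factorization with respect to $Q$), the identification of the $\theta$-fixed part of $QU^{\op}$ as $Q^\theta U^{\op,\theta}$ via $\theta$-stability of $Q$ and $U^{\op}$ plus uniqueness of the big-cell factorization, and the resulting identity $\varphi_{\lambda^G,f}(l)=c(l)\,\varphi^L_{\lambda,w}(l)$ with $c(l)>0$ because the unramified $\chi$ is trivial on the compact $\theta$-fixed unipotent piece. Where you diverge is in how the escaping data in $L$ are produced and used. The paper extracts a sequence approaching $S$-infinity on which $\varphi_{\lambda,v}$ is nonzero directly from the relative Cartan decomposition of $L$ (Delorme--S\'echerre); you instead get a proper $\theta$-split $P'=M'N'$ of $L$ with $\lambda_{N',\eta}\neq 0$ from \Cref{thm-twisted-rsc-condition} applied to $L$, and then use the Kato--Takano asymptotics plus an exponents argument to find the torus sequence. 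That variant is legitimate (it essentially re-proves the ``only if'' half of the characterization for $L$, and it even avoids the paper's ``for simplicity'' caveat about compact parts), though the claim that a nonzero exponential-polynomial cannot vanish deep in the antidominant cone is only gestured at and would need to be written out.

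The genuine gap is your final step. To conclude non-supercuspidality from \Cref{defn-rsc} you must show that $\supp\varphi_{\lambda^G,f}$ has non-compact image in $G/Z_GH$, and the justification you give --- that $S^L_{M'}\cap Z_GH$ lies in finitely many cosets of $S^L_{M'}\cap Z_G$ (the $s^2\in Z_G$ observation), ``so the Cartan decomposition of $G/H$ forces $Z_Gs_n^{-1}H$ to escape'' --- does not establish this: it only rules out $s_n^{-1}\in Z_GH$, not $s_n^{-1}\in\style C Z_GH$ for a fixed compact $\style C$, and the mere existence of the polar decomposition $G=\style C S_0^+\style X^{-1}H$ does not by itself give the properness statement you are implicitly using (that boundedness of $g$ modulo $Z_GH$ bounds its polar part modulo the centre). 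This can be repaired by hand (symmetrize: $s_n^{-1}\in\style C Z_GH$ forces $s_n^{-2}\in Z_G\,\style C\theta(\style C)^{-1}$, and then boundedness of the $\Ad$-eigenvalues contradicts $|\alpha(s_n)|_F\to\infty$), but the paper deliberately avoids the issue altogether: it finishes via \Cref{lem-twisted-kt-6-7-contrapositive} and \Cref{support-not-RSC}, so that nonvanishing of $\varphi_{\lambda^G,f}$ along a sequence approaching $S$-infinity yields $(\lambda^G)_{N,\chi}\neq 0$ for a proper $\theta$-split parabolic of $G$, and \Cref{thm-twisted-rsc-condition} then gives the conclusion with no direct escape-modulo-$Z_GH$ argument. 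Since your sequence $s_n^{-1}$ already lies in the dominant cone, citing \Cref{support-not-RSC} closes the gap immediately; as written, though, the last step is incomplete. Your derivation of \Cref{cor-not-rsc} from the theorem via \Cref{kt08-rsc-thm} coincides with the paper's.
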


The representation $\pi$ admits a central character $\omega$ since $\pi$ is induced from an irreducible representation.
Since $\pi$ is $(H,\chi)$-distinguished, the character $\omega$ agrees with $\chi$ on $Z_G \cap H$, i.e., $\omega \vert_{Z_G \cap H} = \chi \vert_{Z_G \cap H}$.  
Moreover, since $Z_G$ is the almost direct product $S_G (Z_G \cap H)^\circ$ \cite{helminck1991a}, we have that a function $\phi \in C_{\omega}^\infty(G,H,\chi)$ is compactly supported modulo $Z_GH$ if and only if $\phi$ has compact support modulo $S_GH$.\footnote{Refer to \eqref{eq-H-chi-fcn} and \eqref{eq-H-chi-fcn-cmpct} for a description of $C_{\omega}^\infty(G,H,\chi)$.}
We will now study the support of such a function by considering the behaviour of the function as we approach infinity along non-central $(\theta,F)$-split tori.

Fix a maximal $(\theta,F)$-split torus $S_0$ of $G$ and a $\theta$-base $\Delta_0$ of the root system $\Phi_0 = \Phi(G,A_0)$ of $G$, where $A_0$ is a $\theta$-stable maximal $F$-split torus containing $S_0$.  Define $S_0^+$ to be the set  
\begin{align}
S_0^+ &= \{ s \in S_0 : s^{-1} \in S_0^- \},
\end{align}  
where we recall that $S_0^- = \{ s \in S_0 : |\alpha(s)|_F \leq 1,\ \text{for all}\ \alpha \in \Delta_0\}$.
Given a subset $\Theta \subset \Delta_0$, the sets $S_\Theta^+(\epsilon)$, $0 < \epsilon \leq 1$, are defined analogously (see \eqref{eq-split-dominant-part}),
\begin{align*}
S_\Theta^+(\epsilon) &= \{ s \in S_\Theta : s^{-1} \in S_\Theta^-(\epsilon) \}.
\end{align*}  

\begin{defn}\label{S-infinity}
Let $\Theta \subset \Delta_0$ be a $\theta$-split subset and let $S_\Theta$ be the associated standard $(\theta,F)$-split torus.
The sequence $\{s_j\}_{j\in \N}$ approaches {$S_\Theta$-infinity} if the sequences $\{ |\alpha (s_j)|_F \}_{j\in \N}$ diverge to infinity for all $\alpha \in \Delta_0 \setminus \Theta$.  
\end{defn}
For example, if we take $s \in S_0^+ \setminus S_0^1$ such that $|\alpha(s)|_F>1$ for all $\alpha \in \Delta_0 \setminus \Delta_0^\theta$, then the sequence $\{s^n\}_{n\in \N}$ approaches $S_0$-infinity.  For a non-standard $(\theta,F)$-split torus $S$, we can extend \Cref{S-infinity} to $S$ using that $S = x S_\Theta x^{-1}$, for some $x \in  (\Hbf\mathbf M_0)(F)$ and some standard $(\theta,F)$-split torus $S_\Theta$ (see \Cref{KT08-lem-2.5}).
Precisely, the sequence $\{x s_j x^{-1}\}_{j\in \N} \subset S$ approaches $S$-infinity if and only if the sequence $\{s_j\}_{j\in\N} \subset S_\Theta$ approaches $S_\Theta$-infinity.

To prove \Cref{thm-not-rsc}, we apply the contrapositive of \Cref{lem-twisted-kt-6-7} (the analogue of \cite[Lemma 6.7]{kato--takano2008}), which forms part of the characterization of $(H,\chi)$-relatively supercuspidal representations.

\begin{lem}[Contrapositive of \Cref{lem-twisted-kt-6-7}]\label{lem-twisted-kt-6-7-contrapositive}
	Let $\chi$ be an arbitrary quasi-character of $H$.
	Let $\lambda \in \Hom_H(\pi,\chi)$ be a nonzero $(H,\chi)$-equivariant linear function on an admissible $(H,\chi)$-distinguished representation $(\pi,V)$ of $G$.  Let $P = xP_\Theta x^{-1}$ be a $\theta$-split parabolic subgroup of $G$, where $\Theta \subset \Delta_0$ is $\theta$-split and $x\in (\Hbf \mathbf{M}_0)(F)$. Let $N=xN_\Theta x^{-1}$ be the unipotent radical of $P$ and $M = xM_\Theta x^{-1}$ the standard Levi factor.  Let $\style C$ be a compact subset of $G$.  If there exists $v\in V$ such that, for all $0< \epsilon \leq 1$, the function $\varphi_{\lambda,v}$ is non-zero on some element of $\style C S_\Theta^+(\epsilon) x^{-1} H$, then $\lambda_{N,\chi} \neq 0$.
\end{lem}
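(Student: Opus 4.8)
The plan is to observe that the asserted statement is precisely the logical contrapositive of \Cref{lem-twisted-kt-6-7}, so no new analysis is needed: one only has to negate the nested quantifiers there correctly.

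Fix the compact set $\style C \subset G$, the $\theta$-split data $x \in (\Hbf\mathbf M_0)(F)$ and $\Theta \subset \Delta_0$, and the associated $\theta$-split parabolic $P = MN = x P_\Theta x^{-1}$. Then \Cref{lem-twisted-kt-6-7} asserts an implication of the shape
\[
\bigl(\lambda_{N,\chi} = 0\bigr) \ \Longrightarrow\ \Bigl(\,\forall\, v \in V \ \exists\, \epsilon \in (0,1] \ \text{such that}\ \varphi_{\lambda,v}\equiv 0 \ \text{on}\ \style C S_\Theta^+(\epsilon) x^{-1} H \,\Bigr).
\]
First I would take the contrapositive. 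The negation of the consequent is: there exists $v \in V$ such that for all $\epsilon \in (0,1]$ the function $\varphi_{\lambda,v}$ does not vanish identically on $\style C S_\Theta^+(\epsilon) x^{-1} H$, i.e.\ it is non-zero on some element of that set. The negation of the hypothesis is simply $\lambda_{N,\chi} \neq 0$. Hence the contrapositive of \Cref{lem-twisted-kt-6-7} reads: if there exists $v \in V$ such that $\varphi_{\lambda,v}$ is non-zero on some element of $\style C S_\Theta^+(\epsilon) x^{-1} H$ for every $0 < \epsilon \leq 1$, then $\lambda_{N,\chi} \neq 0$. This is verbatim the statement of \Cref{lem-twisted-kt-6-7-contrapositive}, and the proof is complete.

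The only point worth flagging is the quantifier order in \Cref{lem-twisted-kt-6-7}: there the threshold $\epsilon$ is allowed to depend on $v$ (and on $\Theta$, $x$), so the consequent is genuinely of the form ``$\forall v\ \exists\epsilon$'', and its negation is correctly ``$\exists v\ \forall\epsilon$'' --- which is exactly how the hypothesis of \Cref{lem-twisted-kt-6-7-contrapositive} is phrased. I do not anticipate any real obstacle here, since all of the substance is already contained in \Cref{lem-twisted-kt-6-7}, which in turn rests on Kato--Takano's Proposition 5.5 together with \Cref{lem-twisted-kt-6-6}. One could instead argue directly, assuming $\lambda_{N,\chi}=0$ and re-running the estimate in the proof of \Cref{lem-twisted-kt-6-7} to conclude $\varphi_{\lambda,v}$ vanishes on $\style C S_\Theta^+(\epsilon) x^{-1} H$ for a suitable $\epsilon$, contradicting the hypothesis; but this merely duplicates \Cref{lem-twisted-kt-6-7} and is not worth doing.
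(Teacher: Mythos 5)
Your proposal is correct and matches the paper exactly: the paper offers no independent argument for \Cref{lem-twisted-kt-6-7-contrapositive}, presenting it purely as the logical contrapositive of \Cref{lem-twisted-kt-6-7}, which is precisely your reasoning. Your care with the quantifier order (negating ``$\forall v\,\exists\epsilon$'' to ``$\exists v\,\forall\epsilon$'', with $\epsilon$ depending on $v$, $\Theta$, $x$) is the only substantive point, and you handle it correctly.
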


Recall that there is an analogue of the Cartan decomposition for   symmetric spaces due to Delorme and S\'echerre \cite{delorme--secherre2011}. In particular, there exists a compact subset $\style C$ of $G$ and a finite subset $\style X$ of $(\Hbf \mathbf M_0)(F)$ such that $G = \style C S_0^+\style X^{-1} H$, where $\style X^{-1} = \{x^{-1} : x \in \style X\}$.
If $0< \epsilon_1 \leq \epsilon_2 \leq 1$, then $S_\Theta^-(\epsilon_1) \subset S_\Theta^-(\epsilon_2) \subset S_\Theta^-$.  By definition,
$S_\Theta^+(\epsilon) = \{ s \in S_\Theta : s^{-1} \in S_\Theta^-(\epsilon)\}$; therefore $S_\Theta^+(\epsilon_1) \subset S_\Theta^+(\epsilon_2) \subset S_\Theta^+$.  In particular, as $\epsilon$ goes to zero the cosets $\style C S_\Theta^+(\epsilon) x^{-1} H$ shrink, and $\style C S_\Theta^+(\epsilon_1) x^{-1} H \subset \style C S_\Theta^+(\epsilon_2) x^{-1} H$, if $0< \epsilon_1 \leq \epsilon_2 \leq 1$. We have the following.

\begin{cor}\label{support-not-RSC}
Let $\chi$ be an arbitrary quasi-character of $H$ and 
let $\lambda \in \Hom_H(\pi,\chi)$ be a nonzero $(H,\chi)$-equivariant form on an admissible $(H,\chi)$-distinguished representation $(\pi,V)$ of $G$. 
Let $S$ be the $(\theta,F)$-split component of a proper $\theta$-split parabolic $P$ of $G$.  If there exists $v\in V$ such that the function $\varphi_{\lambda,v}$ is non-zero on a sequence $\{s_n\}_{n\in \N}$ approaching $S$-infinity, then $\pi$ is not $(H,\chi,\lambda)$-relatively supercuspidal.
\end{cor}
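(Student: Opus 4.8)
The plan is to recognize the hypothesis as exactly the input needed for \Cref{lem-twisted-kt-6-7-contrapositive} and then to invoke the relative supercuspidality criterion \Cref{thm-twisted-rsc-condition}. First I would put the given proper $\theta$-split parabolic $P$ into standard position: by \Cref{KT08-lem-2.5}(1) there exist a $\theta$-split subset $\Theta \subset \Delta_0$ and an element $x \in (\Hbf \Mbf_0)(F)$ with $P = xP_\Theta x^{-1}$, the inclusion $\Theta \subsetneq \Delta_0$ being strict because $P$ is a proper parabolic subgroup. I would write $N = xN_\Theta x^{-1}$ for the unipotent radical of $P$ and $M = xM_\Theta x^{-1}$ for the associated Levi, so that, by the discussion following \Cref{KT08-lem-2.5}, the $(\theta,F)$-split component of $P$ is $S = S_M = xS_\Theta x^{-1}$.

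Next I would unwind what it means for $\{s_n\}_{n\in\N}$ to approach $S$-infinity. By the extension of \Cref{S-infinity} to non-standard $(\theta,F)$-split tori, one may write $s_n = xt_nx^{-1}$ with $t_n \in S_\Theta$ and $\{t_n\}_{n\in\N}$ approaching $S_\Theta$-infinity, i.e., $|\alpha(t_n)|_F \to \infty$ for every $\alpha \in \Delta_0 \setminus \Theta$. Fixing $0 < \epsilon \le 1$ and using that $\Delta_0 \setminus \Theta$ is finite while $|\alpha(t_n^{-1})|_F = |\alpha(t_n)|_F^{-1} \to 0$, I would extract an index $n_\epsilon$ such that $t_n^{-1} \in S_\Theta^-(\epsilon)$, equivalently $t_n \in S_\Theta^+(\epsilon)$, for all $n \ge n_\epsilon$.

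Then, taking the compact (in fact one-point) set $\style C = \{x\}$, the element $s_n = x\,t_n\,x^{-1}$ lies in $\style C\,S_\Theta^+(\epsilon)\,x^{-1}H$ for every $n \ge n_\epsilon$ (choosing the $H$-component to be the identity), and $\varphi_{\lambda,v}(s_n) \neq 0$ by hypothesis. Hence, for each $0 < \epsilon \le 1$, the relative matrix coefficient $\varphi_{\lambda,v}$ is nonzero on some point of $\style C\,S_\Theta^+(\epsilon)\,x^{-1}H$, which is precisely the hypothesis of \Cref{lem-twisted-kt-6-7-contrapositive}; that lemma then forces $\lambda_{N,\chi} \neq 0$. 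Since $P = MN$ is a proper $\theta$-split parabolic subgroup of $G$, \Cref{thm-twisted-rsc-condition} yields that $(\pi,V)$ is not $(H,\chi,\lambda)$-relatively supercuspidal, which is the assertion.

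I do not anticipate a genuine obstacle here: the analytic heart of the matter, namely Casselman's canonical lifting, the auxiliary twisted involution $\theta_x$, and the asymptotic vanishing of $\lambda$-relative matrix coefficients, has already been absorbed into \Cref{lem-twisted-kt-6-7} and the equivalence \Cref{thm-twisted-rsc-condition}, so what remains is essentially a chase through the definitions of $S_\Theta^{\pm}(\epsilon)$ and of ``approaching $S$-infinity''. The one place I would pause is the choice of $\style C$: one is tempted to invoke the Delorme--S\'echerre Cartan decomposition $G = \style C\,S_0^+\,\style X^{-1}H$ at this stage, but it is unnecessary for the corollary, since the single point $\{x\}$ suffices, and that decomposition becomes relevant only in the proof of \Cref{thm-twisted-not-rsc} itself.
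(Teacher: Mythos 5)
Your proposal is correct and follows exactly the paper's route: the paper's proof of \Cref{support-not-RSC} is simply ``apply \Cref{lem-twisted-kt-6-7-contrapositive} to get $\lambda_{N,\chi}\neq 0$, then conclude by \Cref{thm-twisted-rsc-condition}.'' Your additional steps (standardizing $P=xP_\Theta x^{-1}$ via \Cref{KT08-lem-2.5}, the $\epsilon$-chase showing $t_n\in S_\Theta^+(\epsilon)$ for $n$ large, and taking $\style C=\{x\}$) just make explicit the definition-unwinding the paper leaves implicit, including the correct observation that the Delorme--S\'echerre decomposition is not needed here.
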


\begin{proof}
Apply \Cref{lem-twisted-kt-6-7-contrapositive} to show that $\lambda_{N,\chi} \neq 0$, then the result follows from \Cref{thm-twisted-rsc-condition}.
\end{proof}

\subsection{The proof of \Cref{thm-twisted-not-rsc}}
We work under the assumptions of \Cref{thm-twisted-not-rsc} and we prove the contrapositive statement:

\begin{thm}[Contrapositive of \Cref{thm-twisted-not-rsc}]\label{thm-twisted-not-rsc-contrapositive}
Let $\chi$ be an unramified quasi-character of $H$.
If $\delta_Q^{1/2}\rho$ is not $(L^\theta, \delta_{Q^\theta}\chi\vert_{L^\theta}, \lambda)$-relatively supercuspidal, then the induced representation $\pi=\iota_Q^G\rho$ is not $(H,\chi,\lambda^G)$-relatively supercuspidal.
\end{thm}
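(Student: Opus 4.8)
The plan is to propagate the failure of relative supercuspidality from $L$ up to $\pi=\iota_Q^G\rho$ and to detect it through \Cref{support-not-RSC}. Throughout write $\chi_L:=\delta_{Q^\theta}\chi\vert_{L^\theta}$.

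\textbf{Step 1: failure on $L$.} Since $\delta_Q^{1/2}\rho$ is not $(L^\theta,\chi_L,\lambda)$-relatively supercuspidal, \Cref{thm-twisted-rsc-condition}, applied inside $L$ to the $(L^\theta,\chi_L)$-distinguished representation $\delta_Q^{1/2}\rho$, furnishes a proper $\theta$-split parabolic subgroup $P'=M'N'$ of $L$ with $\lambda_{N',\chi_L}\neq 0$. Picking $[w]\in(\delta_Q^{1/2}\rho)_{N'}$ with $\langle\lambda_{N',\chi_L},[w]\rangle\neq 0$, a canonical lift $w\in V_\rho$, and a suitable $s\in S_{M'}^+$ (so $s^{-1}$ lies deep in the dominant part $S_{M'}^-$), the $(H,\chi)$-analogue of \cite[Proposition 5.5]{kato--takano2008} recalled in \Cref{sec-equivariant-kt08} shows that $n\mapsto\varphi^L_{\lambda,w}(s^n)=\langle\lambda,(\delta_Q^{1/2}\rho)(s^{-n})w\rangle$ is a non-zero exponential polynomial in $n$ for $n$ large. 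In particular the $\lambda$-relative matrix coefficient of $\delta_Q^{1/2}\rho$ attached to $w$ does not vanish along the sequence $\{s^n\}$, which approaches $S_{M'}$-infinity inside $L$.

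\textbf{Step 2: a $\theta$-split parabolic of $G$.} Because $P'$ is proper in $L$, the $(\theta,F)$-split component $S_{M'}$ of $M'$ is a non-trivial $(\theta,F)$-split torus of $G$ lying in $L$ (it strictly contains the $(\theta,F)$-split component of $L$). Using the structure theory of \Cref{sec-tori-involution} and \Cref{sec-pblc-involution} together with the polar decomposition $G=\style C S_0^+\style X^{-1}H$ of \cite{delorme--secherre2011}, I would fix a maximal $(\theta,F)$-split torus $S_0\supseteq S_{M'}$, a $\theta$-base $\Delta_0$, and a $\theta$-split subset $\Theta\subsetneq\Delta_0$ with $S_{M'}\subseteq S_\Theta$, and — adjusting $s$ (and, if needed, $P'$) from Step 1 — arrange that $\{s^n\}$ also approaches $S_\Theta$-infinity for the proper $\theta$-split parabolic subgroup $P=P_\Theta=MN$ of $G$.

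\textbf{Step 3: transfer of matrix coefficients (the crux).} I would realize $w$ inside $\pi$ by choosing $f\in V_\pi=\iota_Q^G\rho$ supported in $Q\Omega$ for a sufficiently small $\theta$-stable compact open subgroup $\Omega$, with $f(1)=w$; this is legitimate since restriction to the closed set $QH$ surjects $V_\pi$ onto $I(\rho,QH)$ (see the proof of \Cref{lem-hom-injects}). The aim is to prove that
\[
\varphi_{\lambda^G,f}(s^n)=\int_{Q^\theta\backslash H}\langle\lambda,\chi(h)^{-1}f(hs^{-n})\rangle\,d\mu(h)
\]
equals, for $n$ large, a non-zero constant times $\langle\lambda,(\delta_Q^{1/2}\rho)(s^{-n})w\rangle=\varphi^L_{\lambda,w}(s^n)$, hence is non-zero by Step 1. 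This is a statement about the support of $h\mapsto f(hs^{-n})$ on the compact quotient $Q^\theta\backslash H$: non-vanishing of $f(hs^{-n})$ forces the coset $Q^\theta h$ into the image of $Q\Omega s^n\cap H$, and one must show that as $n\to\infty$ this image contracts onto a single small neighbourhood of the base coset on which the integrand collapses to $\langle\lambda,\delta_Q^{1/2}(s^{-n})\rho(s^{-n})w\rangle$ times the (fixed) volume of that neighbourhood. The ingredients are an Iwahori-type factorization of $\Omega$ compatible with $Q$, with $Q^{\op}$, and with the $\theta$-split torus $S_{M'}$; the $\theta$-stability of $U$ and $U^{\op}$; the big-cell decomposition $(U^{\op})^\theta L^\theta U^\theta$ of $Q^\theta$ in $H$; and, crucially, the \emph{unramifiedness} of $\chi$, which makes $\chi$ trivial on all the compact subgroups that appear. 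Obtaining this control of the support — uniformly in $n$ — and ruling out cancellation is the step I expect to be the \textbf{main obstacle}, and it is precisely where the standing hypothesis that $\chi$ be unramified enters (cf.\ the note opening \Cref{sec-not-rsc}).

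\textbf{Step 4: conclusion.} By Steps 1 and 3 the $\lambda^G$-relative matrix coefficient $\varphi_{\lambda^G,f}$ is non-zero along $\{s^n\}$, a sequence approaching $S_M$-infinity for the proper $\theta$-split parabolic subgroup $P=MN$ of $G$ chosen in Step 2. \Cref{support-not-RSC} then gives that $\pi=\iota_Q^G\rho$ is not $(H,\chi,\lambda^G)$-relatively supercuspidal, which is \Cref{thm-twisted-not-rsc-contrapositive} and hence proves \Cref{thm-twisted-not-rsc}.
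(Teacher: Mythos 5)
Your overall architecture (a section $f$ supported near the closed orbit $QH$, evaluation of $\varphi_{\lambda^G,f}$ along a torus sequence going to infinity inside $L$, reduction to a $\lambda$-relative matrix coefficient of $\delta_Q^{1/2}\rho$ using that $\chi$ is unramified, and conclusion via \Cref{support-not-RSC}) is the paper's, but the proposal has a genuine gap exactly where the theorem's content lies: Step 3 is only described, and you yourself flag the support control ``uniformly in $n$'' and possible cancellation as an unresolved obstacle. In fact no asymptotics, uniformity, or cancellation argument is needed, and closing the step is an exact computation valid for \emph{every} $\ell\in L$: choose $K$ with Iwahori factorization with respect to $Q$ and take $f_v\in V_\pi$ supported on $QK=Q(U^{\op}\cap K)$ with $f_v\equiv v$ on $U^{\op}\cap K$. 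For $\ell\in L$, non-vanishing of $f_v(\ell h\ell^{-1})$ forces $h\in Q(U^{\op}\cap \ell^{-1}K\ell)$ because $\ell$ normalizes $U$ and $U^{\op}$; writing $h=q\bar u$ and using that $h$ is $\theta$-fixed, that $Q$ and $U^{\op}$ are $\theta$-stable, and that the product map $L\times U\times U^{\op}\to G$ is injective, one gets $\theta(q)=q$ and $\theta(\bar u)=\bar u$. Hence, modulo $Q^\theta$, the integrand in the formula for $\varphi_{\lambda^G,f_v}(\ell)$ from \Cref{lem-hom-injects} is supported exactly on the image of $(U^{\op}\cap\ell^{-1}K\ell)^\theta$, where $f_v(\ell\bar u\ell^{-1})=v$, giving the identity $\varphi_{\lambda^G,f_v}(\ell)=c_{\ell,\chi}\,\delta_Q^{1/2}(\ell^{-1})\,\varphi_{\lambda,v}(\ell)$ with $c_{\ell,\chi}=\int_{Q^\theta\backslash(U^{\op}\cap\ell^{-1}K\ell)^\theta}\chi(\bar u)^{-1}\,d\mu(\bar u)$; unramifiedness of $\chi$ makes $\chi$ trivial on this compact unipotent group, so $c_{\ell,\chi}$ is a positive volume and cannot cause cancellation. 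Non-vanishing of $\varphi_{\lambda,v}$ along the chosen sequence then transfers verbatim to $\varphi_{\lambda^G,f_v}$.

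Two secondary points. First, Step 1 is more roundabout than necessary and slightly gapped: the hypothesis directly provides a $\lambda$-relative matrix coefficient of $\delta_Q^{1/2}\rho$ that is not compactly supported modulo $S_LL^\theta$, and the relative Cartan decomposition of Delorme--S\'echerre applied to $L$ already yields a sequence of non-vanishing points of the form $o_ns_nx^{-1}$ with $\{s_n\}$ approaching infinity in a maximal $(\theta,F)$-split torus of $L$; your exponential-polynomial argument via \Cref{thm-twisted-rsc-condition} only guarantees non-vanishing along a subsequence of $\{s^n\}$, not the full sequence, so it needs that extra word. Second, Step 2 is not automatic: for a proper $\theta$-split parabolic $P'=M'N'$ of $L$, the group $P'U$ is in general \emph{not} $\theta$-split in $G$ (its $\theta$-image contains $U$, not $U^{\op}$), so ``adjusting $s$ and, if needed, $P'$'' to reach a proper $\theta$-split parabolic of $G$ requires an argument. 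The paper avoids this by working with a maximal non-central $(\theta,F)$-split torus $S$ of $L$ and, when the non-vanishing sequence is not purely inside $S$, applying \Cref{lem-twisted-kt-6-7-contrapositive} directly (with the compact set and the twisting element $x$ coming from the Cartan decomposition) instead of \Cref{support-not-RSC}.
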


Let $S$ be a maximal non-central $(\theta,F)$-split torus of $L$.\footnote{By assumption, $L$ admits an irreducible non-$(L^\theta, \delta_{Q^\theta}\chi\vert_{L^\theta}, \lambda)$-supercuspidal representation; therefore, $Z_LL^\theta\backslash L$ must be non-compact.  If the $(\theta,F)$-split component $S_L$ of $L$ is a maximal $(\theta,F)$-split torus in $L$, then $C_L(S_L)=L$ and by \cite[Proposition 4.7]{helminck--wang1993} $L$ has  no proper $\theta$-split parabolic subgroups.  Moreover, if this is case, then $S_L L^\theta \backslash L$, and hence $Z_L L^\theta \backslash L$, is compact by the Relative Cartan Decomposition \cite{delorme--secherre2011}.} 
By assumption $\delta_Q^{1/2}\rho$ is not $(L^\theta,\delta_{Q^\theta}\chi\vert_{L^\theta},\lambda)$-relatively supercuspidal; therefore, there is some relative matrix coefficient $\varphi_{\lambda, v}$ of $\delta_Q^{1/2}\rho$ that is not compactly supported modulo $S_L L^\theta$. 
By the Relative Cartan Decomposition \cite{delorme--secherre2011} applied to $L$, there is a compact subset $\style C_L$ of $L$ and a finite subset $\style X_L$ of 
$(\mathbf{L}^\theta C_\mathbf{L}(S))(F)$ such that $L = \style C_L S^+ \style X_L^{-1} L^\theta$.
The support of $\varphi_{\lambda, v}$ is not contained in any subset of $L$ of the form $C S_L L^\theta$, where $C\subset L$ is compact.
In fact, since $\style X_L$ is finite and 
\begin{align*}
\varphi_{\lambda,v}(\ell'\ell) &= \delta_{Q^\theta}(\ell)^{-1}\chi(\ell)^{-1}\varphi_{\lambda,v}(\ell'), & \text{for all}\ \ell' \in L, \ell \in L^\theta,
\end{align*}
there exists a sequence $\{\ell_n\}_{n\in \N} \subset L$ such that $\varphi_{\lambda,v}(\ell_n) \neq 0$, and where $\ell_n = o_n s_n x^{-1}$, where $o_n \in \style C_L$, $x\in \style X_L$ (a fixed element), and $\{s_n\} \subset S^+$ approaches $S$-infinity.\footnote{This relies on the fact that $\delta_{Q^\theta}(\ell)^{-1}\chi(\ell)^{-1}$ is nonzero for all $\ell \in L^\theta$.}
Replacing $S$ with the maximal $(\theta,F)$-split torus $S' = x S x^{-1}$ of $L$, we have that $\varphi_{\lambda,v}$ is nonzero on the sequence $\{\ell_n = o_n's_n'\}_{n\in \N}$, with $o_n' = o_n x^{-1}$,  $s_n' = x s_n x^{-1}$, and where $\{s_n'\}$ approaches $S'$-infinity.
The sequence $\{\ell_n\}$ has non-compact image modulo $S_L L^\theta$ and modulo $S_G H$ (recall $S_G \subset S_L$ and $L^\theta \subset H$).  

The idea of the proof of \Cref{thm-twisted-not-rsc} is to understand the support of the $\lambda^G$-relative matrix coefficients of $\pi$ in terms of the support of the $\lambda$-relative matrix coefficients of $\delta_{Q}^{1/2}\rho$.\footnote{We can consider only $\rho$ since $\rho$ and $\delta_Q^{1/2}\rho$ act on the same vector space; moreover, $\Hom_{L^\theta}(\delta_Q^{1/2}\rho,\delta_{Q^\theta}\chi\vert_{L^\theta})$ can be identified with $\Hom_{L^\theta}(\rho,\delta_{Q^\theta}(\delta_Q^{-1/2}\chi)\vert_{L^\theta})$.}
For simplicity, we assume that we can find a vector $v$ in the space  $V_\rho \cong V_{\delta_Q^{1/2}\rho}$ of $\rho$ such that $\varphi_{\lambda,v}$ is not compactly supported on $S/S_L$.
If this is not the case, we may still produce $\varphi_{\lambda,v}$ and a sequence $\{\ell_n : \varphi_{\lambda,v}(\ell_n) \neq 0\}$ with non-compact image modulo $S_L L^\theta$ (and modulo $S_G H$), as above;
the argument below to produce a non-compactly supported $\lambda^G$-relative matrix coefficient $\varphi_{\lambda^G, f_v}$ of $\pi$ still goes through with only a slight adjustment.  In particular, one need only apply \Cref{lem-twisted-kt-6-7-contrapositive} instead of \Cref{support-not-RSC}.
\begin{proof}[Proof of \Cref{thm-not-rsc}]
Let $V_\rho$ be the space of $\rho$ and identify $V_\rho$ with the space of $\delta_Q^{1/2}\rho$.
Let $\pi = \iota_Q^G\rho$ and let $V_\pi$ be the space of $\pi$.
Let $S$ be a maximal $(\theta,F)$-split torus $S$ of $L$.  As above, note that $S$ is non-central in $L$.  
We show that if $\delta_Q^{1/2}\rho$ admits a non-compactly supported (modulo $Z_LL^\theta$) $\lambda$-relative matrix coefficient, then $\pi$ admits a non-compactly supported (modulo $Z_G H$) $\lambda^G$-relative matrix coefficient.
In fact, our goal is to construct a vector $f\in V_\pi$ such that the $\lambda^G$-relative matrix coefficient $\varphi_{\lambda^G,f}$ is non-compactly supported modulo $Z_G H$.
First, we will study an arbitrary  $\lambda^G$-relative matrix coefficient $\varphi_{\lambda^G,f}$.  
Let $f\in V_\pi$ and consider the $\lambda^G$-relative matrix coefficient $\varphi_{\lambda^G,f} \in C_{\omega}^\infty(G,H,\chi)$.
Using the definition of $\lambda^G$ given in \eqref{def-lambdaG}, we see that for any $\ell \in L$
\begin{align*}
\varphi_{\lambda^G,f}(\ell) & = \ip{\lambda^G}{\pi(\ell)^{-1}f} \\
 & =  \int_{Q^\theta \backslash H} \ip{\lambda}{\chi(h)^{-1}[\pi(\ell)^{-1}f](h)} \ d\mu(h) \\
 &=  \int_{Q^\theta \backslash H} \ip{\lambda}{\chi(h)^{-1} f (h\ell^{-1})} \ d\mu(h) \\
 &=  \int_{Q^\theta \backslash H} \ip{\lambda}{\chi(h)^{-1} f (\ell^{-1}\ell h\ell^{-1})} \ d\mu(h) \\
 &= \int_{Q^\theta \backslash H} \ip{\lambda}{\chi(h)^{-1} \delta_Q^{1/2}(\ell^{-1})\rho(\ell^{-1}) f(\ell h \ell^{-1})} \ d\mu(h). 
\end{align*}
That is, for any element $\ell$ of $L$, we have
\begin{align}\label{int-lambda-rel}
\varphi_{\lambda^G,f}(\ell) 
 &= \int_{Q^\theta \backslash H} \ip{\lambda}{\delta_Q^{1/2}(\ell^{-1})\rho(\ell^{-1}) \chi(h)^{-1} f(\ell h \ell^{-1})} \ d\mu(h).
\end{align}
If we can appropriately control the elements $\chi(h)^{-1}f(\ell h \ell^{-1}) \in V_\rho$, then the integrand in \eqref{int-lambda-rel}  is essentially a $\lambda$-relative matrix coefficient of $\delta_Q^{1/2}\rho$.  
We will link the support of the $\lambda$-relative matrix coefficients of $\delta_Q^{1/2}\rho$ with the support of the $\lambda^G$-relative matrix coefficients of $\pi$, via the integral \eqref{int-lambda-rel}.

Let $K$ be a compact open subgroup of $G$ that has Iwahori factorization with respect to $Q = LU$.  Then the product map (in any order)
\begin{align*}
(L\cap K) \times (U\cap K) \times (U^{\op} \cap K) \rightarrow K,
\end{align*}
is bijective.  
Take a vector $v\in V_\rho$ such that the map given by
\begin{align*}
	s\mapsto \ip{\lambda}{\delta_Q^{1/2}(s^{-1})\rho(s^{-1})v} &= \delta_Q^{1/2}(s^{-1})\varphi_{\lambda,v}(s), & s \in S
\end{align*}
is not compactly supported on $S$ modulo $S_L$.  
Define $f_v \in V_\pi$ to be zero off of $QK = Q(U^{\op} \cap K)$ 
and such that $f_v(\bar u) = v$ for $\bar u \in U^{\op}\cap K$.  Requiring $f_v \in V_\pi$ completely determines the function $f_v: G \rightarrow V_\rho$.  Now consider the possible values of $f_v(\ell h \ell^{-1})$. By construction, $f_v$ is zero unless $\ell h \ell^{-1} \in Q(U^{\op} \cap K)$.
Suppose that $\ell h \ell^{-1} \in Q(U^{\op} \cap K)$. This occurs if and only if $h$ lies in
\begin{align}\label{ch6-equation}
(\ell^{-1}Q \ell) \ell^{-1} (U^{\op} \cap K) \ell & = Q(U^{\op} \cap \ell^{-1} K \ell),
\end{align}
where the equality in \eqref{ch6-equation} holds since $\ell \in L$ normalizes $U$ and $U^{\op}$.  Write $h = q\bar u$ where $q \in Q$ and $\bar u \in U^{\op} \cap \ell^{-1} K \ell$.  Since $h$ is $\theta$-fixed,
\begin{align*}
q\bar u = h = \theta(h) = \theta(q) \theta(\bar u),
\end{align*}
where $\theta(q) \in Q$ and $\theta(\bar u) \in U^{\op}$ since both subgroups are $\theta$-stable.  The product map $(q',\bar u')  \mapsto q'\bar u'$ on
\begin{align*}
L\times U \times U^{\op} = Q\times U^{\op} & \rightarrow G
\end{align*}
is one-to-one; therefore, $\theta(q) = q$ and $\theta(\bar u ) = \bar u$.
It follows that $h$ is equivalent to $\bar u$ in $Q^\theta \backslash H$; moreover,
$ \ell \bar u \ell^{-1} \in U^{\op} \cap K$ since $\bar u \in (U^{\op} \cap \ell^{-1} K \ell)^\theta$ and so $f_v(\ell \bar u \ell^{-1})= v$.

The compact open subgroup $\ell^{-1}K\ell$ also has Iwahori factorization with respect to $Q$. In particular, the image of $(\ell^{-1}K\ell)^\theta$ in $Q^\theta \backslash H$ is equal to the image of $(U^{\op} \cap \ell^{-1} K \ell)^\theta$ which is open  
and thus has positive measure.
From \eqref{int-lambda-rel}, integrating over the image of $(U^{\op} \cap \ell^{-1} K \ell)^\theta$ in $Q^\theta \backslash H$, we obtain 
\begin{align*}
\varphi_{\lambda^G,f_v}(\ell) & = \ip{\lambda^G}{\pi(\ell^{-1})f_v}\\
& = \int_{Q^\theta \backslash H} \ip{\lambda}{\delta_Q^{1/2}(\ell^{-1})\rho(\ell^{-1}) \chi(h)^{-1} f_v(\ell h \ell^{-1})} \ d\mu(h)\\
& = \int_{Q^\theta \backslash (U^{\op} \cap \ell^{-1} K \ell)^\theta} \ip{\lambda}{\delta_Q^{1/2}(\ell^{-1})\rho(\ell^{-1}) \chi(\bar{u})^{-1} f_v(\ell \bar{u} \ell^{-1})} \ d\mu(\bar{u})\\
& = \int_{Q^\theta \backslash (U^{\op} \cap \ell^{-1} K \ell)^\theta} \chi(\bar{u})^{-1}\delta_Q^{1/2}(\ell^{-1})\ip{\lambda}{\rho(\ell^{-1})  v} \ d\mu(\bar{u})\\
& = \delta_Q^{1/2}(\ell^{-1}) \varphi_{\lambda,v}(\ell)   \int_{Q^\theta \backslash (U^{\op} \cap \ell^{-1} K \ell)^\theta} \chi(\bar{u})^{-1} \ d\mu(\bar{u}) \\
& = c_{\ell,\chi} \cdot \delta_Q^{1/2}(\ell^{-1}) \varphi_{\lambda,v}(\ell),
\end{align*}
where 
\begin{align}\label{eq-volume-const}
	 c_{\ell,\chi} & = \int_{Q^\theta \backslash (U^{\op} \cap \ell^{-1} K \ell)^\theta} \chi(\bar{u})^{-1} \ d\mu(\bar{u}).
\end{align}
By assumption, $\chi$ is unramified\footnote{Ultimately, one would like to remove this assumption on $\chi$; however, we need to guarantee that $c_{\ell,\chi}\neq 0$ for $\ell \in S \subset L$ approaching $S$-infinity.}; therefore, $\chi$ is trivial on the compact unipotent group $(U^{\op} \cap \ell^{-1} K \ell)^\theta$.Thus, we have that $c_{\ell,\chi}$ is nonzero and equal to the volume $\mu(Q^\theta\backslash(U^{\op} \cap \ell^{-1} K \ell)^\theta) > 0$ of $Q^\theta\backslash(U^{\op} \cap \ell^{-1} K \ell)^\theta$.
It is immediate that if the support of $\varphi_{\lambda, v}$ on $S/S_L$ is non-compact then the same holds for $\varphi_{\lambda^G, f_v}$.
In particular, by the assumption on $\varphi_{\lambda, v}$, there exists a sequence $\{s_n\}_{n\in \N}$ approaching $S$-infinity such that $\varphi_{\lambda, v}(s_n) \neq 0$, for all $n \in \N$.
For this same sequence, we have that 
$\varphi_{\lambda^G, f_v}(s_n) \neq 0$, for every $n \in \N$.
By \Cref{support-not-RSC}, $\pi$ is not $(H,\chi,\lambda^G)$-relatively supercuspidal.
\end{proof}

\subsection*{Acknowledgements}
The author would like to thank the Automorphic Representations Research Group at the University of Calgary for several helpful discussions.
Thank you to Fiona Murnaghan for suggesting this project.  Thank you to Omer Offen for his encouragement and many helpful suggestions.  Finally, thank you to Shuichiro Takeda for informing the author of his independent work on \Cref{thm-twisted-rsc-condition} and \Cref{thm-twisted-kt08-7-1}.

\bibliographystyle{amsalpha}

\bibliography{matrix-coeff}
%

\end{document}